\theoremstyle{plain}
\newtheorem{thm}{Theorem}[section]
\newtheorem{prop}[thm]{Proposition}
\newtheorem{lem}[thm]{Lemma}
\newtheorem{cor}[thm]{Corollary}
\theoremstyle{remark}
\newtheorem{rem}[thm]{Remark}
\newtheorem*{acks}{Acknowledgements}
\theoremstyle{definition}
\newtheorem{defn}[thm]{Definition}
\newtheorem{eg}[thm]{Example}
\theoremstyle{conjecture}
\newtheorem{conj}[thm]{Conjecture}
\numberwithin{equation}{section}
\def\1A{\mathcal{A}}
\def\B{\mathcal{B}}
\def\C{\mathsf{C}}
\def\D{\mathsf{D}}
\def\E{\mathsf{E}}
\def\F{\mathsf{F}}
\def\G{\mathsf{G}}
\def\H{\mathsf{H}}
\def\I{\mathsf{I}}
\def\K{\mathsf{K}}
\def\L{\mathsf{L}}
\def\M{\mathsf{M}}
\def\N{\mathsf{N}}
\def\R{\mathsf{R}}
\def\T{\mathcal{T}}
\def\U{\mathsf{U}}
\def\V{\mathsf{V}}
\def\W{\mathsf{W}}
\def\X{\mathsf{X}}
\def\Y{\mathsf{Y}}
\def\Z{\mathsf{Z}}
\def\a{\mathsf{a}}
\def\b{\mathsf{b}}
\def\c{\mathsf{c}}
\def\d{\mathsf{d}}
\def\e{\mathsf{e}}
\def\f{\mathsf{f}}
\def\i{\mathsf{i}}
\def\j{\mathsf{j}}
\def\k{\mathsf{k}}
\def\l{\mathsf{l}}
\def\m{\mathsf{m}}
\def\n{\mathsf{n}}
\def\o{\mathsf{o}}
\def\p{\mathsf{p}}
\def\q{\mathsf{q}}
\def\s{\mathsf{s}}
\def\t{\mathsf{t}}
\def\x{\mathsf{x}}
\def\y{\mathsf{y}}
\def\A1{\mathsf{A}}
\def\2B{\mathsf{B}}
\def\op{\mathsf{op}}
\def\Hom{\mathsf{Hom}}
\def\Ext{\mathsf{Ext}}
\def\Tor{\mathsf{Tor}}
\def\Mor{\mathsf{Mor}}
\def\dg{\mathsf{dg}}
\def\HH{\mathsf{HH}}
\def\Per{\mathsf{Per}}
\def\Perf{\mathsf{Perf}}
\def\DbX{\mathsf{D}^{\mathsf{b}}(\mathsf{X})}
\def\DbY{\mathsf{D}^{\mathsf{b}}(\mathsf{Y})}
\def\qch{\mathsf{qch}}
\def\Qch{\mathsf{Qch}}
\def\cof{\mathsf{cof}}
\def\Cone{\mathsf{Cone}}
\def\mod{\mathsf{mod}}
\def\th{\mathsf{th}}
\def\PBs{\mathsf{PBs}}
\title{some remarks of Hochschild homology and semi-orthogonal decompositions}
\begin{document}

\author{Xun Lin}
\address{Yau Mathematical Sciences Center, Tsinghua university, Beijing China}
\email{lin-x18@mails.tsinghua.edu.cn}
\begin{abstract}
   Given a nontrivial semi-orthogonal decomposition $\Perf(\X)=\langle \mathcal{A},\mathcal{B}\rangle$, and assume that the base locus of $\omega_{\X}$ is a proper closed subset, it was proved by Kotaro Kawatani and Shinnosuke Okawa that all skyscraper sheaves $\k(\x)$ with $\x\notin \mathsf{Bs}\vert \omega_{\X}\vert$ belong to exactly one and only one of the components. It is natural to ask which one it is, and whether we can determine this by certain linear invariants. In this note we use Hochschild homology of derived category of coherent sheaves with support to provide another proof that if the $-\n^{\th}$ Hochschild homology of a component is nonzero, then the skyscraper sheaves we consider above belong to such component, which was originally proved by Dmitrii Pirozhkov \cite[Lemma 5.3]{pirozhkov2020admissible}. Furthermore, we prove a conjecture proposed by Kuznetsov
   about classifying $\n$-Calabi-Yau admissible subcategory of $\Perf(\X)$ ($\dim \X=\n$) for certain projective smooth variety $\X$  if we put more assumptions to the Calabi-Yau categories. Finally we remark that the additive invariants of derived category with support could provide more linear obstructions to semi-orthogonal decompositions.
\end{abstract}

\maketitle

\tableofcontents
\section{Introduction}
Derived category of coherent sheaves play a central role in homological algebra, Mirror symmetry, and representation theory. One way to study the
derived category is decomposing itself as a triangulated category. The natural decomposition is orthogonal decomposition. However, it is too restrictive since if the variety is connected, the derived category of coherent sheaves has no nontrivial orthogonal decompositions. The notion of semi-orthogonal decomposition is more flexible and it turns out that there are many examples, and much geometric information are encoded \cite{alex2014semiorthogonal}.

\par

Hochschild homology theory of admissible subcategories of derived category of a projective smooth variety and $\dg$ categories were studied by many people. For admissible subcategories, A.\ Kuznetsov constructed an intrinsic Hochschild homology theory in paper \cite{K}. One of the fantastic properties is that the Hochschild homology is additive for semi-orthogonal decompositions. This provided some interesting obstruction for semi-orthogonal decompositions. For example, if $\Perf(\X)$ admits a full exceptional collection, then $\X$ must be of Hodge-Tate. $\dg$ categories are considered as noncommutative counterpart of varieties, they are more flexible than triangulated categories. For instance, many homological theories can be defined at the framework of $\dg$ categories. Furthermore, some classical conjectures in algebraic geometry can be generalized to certain $\dg$ categories, see \cite{tabuada2019noncommutative}. The Hochschild homology (or additive invariants) of $\dg$ categories were
extensively studied by B.\ Keller (or G.\ Tabuada), see \cite{KELLER1998223}, \cite{KELLER19991} (or \cite{tabuada2004une}).  Kuznetsov's additivity of semi-orthogonal decompositions can be obtained in framework of $\dg$ categories, even though it is not intrinsic --- we use the $\dg$ enhancement which would be non-canonical. However, it is expected that Hochschild homology of $\dg$ categories should provide more flexible possibilities to the study of semi-orthogonal decompositions or derived categories itself.

 \par

 Kotaro Kawatani and Shinnosuke Okawa proved that if there is a semi-orthogonal decomposition $\Perf(\X):=\langle \1A,\B\rangle$, then either support of all objects in $\1A$ are in $\Z:=\2B\s\vert\omega_{\X}\vert$ (then $\k(\x)\in \B$ with $\x\in \U:=\X\setminus \Z$) or support of all objects in $\B$ are in $\Z$ (then $\k(\x)\in \1A$ with $\x\in \U:=\X\setminus \Z$) \cite{article3}. A natural question is that under the assumption that $\Z$ being proper closed subset, could we determine to which component the skyscraper sheaves belong by certain linear invariants ?

 \par

In this note, we apply new techniques to study the semi-orthogonal decomposition problems via Hochschild homology of $\dg$ categories. The key observation is that
 $\Perf_{\Z}(X)$ is a unique enhanced triangulated category \cite{Antieau2018OnTU}. We provide an interesting answer to the question raised above by Hochschild homology of derived categories of coherent sheaves with support. We write $\n$ as the dimension of $\X$. Note that the theorem bellow was originally proved in \cite[Lemma 5.3]{pirozhkov2020admissible}.

 \begin{thm}(= Theorem \ref{A})\label{A1}
  \cite[Lemma 5.3]{pirozhkov2020admissible}Let $\X$ be a projective smooth variety of dimension $\n$. Suppose there is a nontrivial semi-orthogonal decomposition $\Perf(\X)=\langle\mathcal{A},\mathcal{B}\rangle$ with $\HH_{-\n}(\mathcal{B})\neq 0$. Then the support of any object in $\mathcal{A}$ is contained in $\Z=\2B\s\vert\omega_{\X}\vert$. The skyscraper sheaves $\k(\x)$ with $\x\in \X\setminus \Z$ belongs to $\mathcal{B}$. Furthermore, $\HH_{-\n}(\mathcal{A})= 0$, $\HH_{-\n}(\Perf(\X))\cong \HH_{-\n}(\mathcal{B})$. If $\mathcal{B}$ is a $\n$ Calabi-Yau category, then it is indecomposable.
\end{thm}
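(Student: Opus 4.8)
The plan is to pit the Kawatani--Okawa dichotomy \cite{article3} against the behaviour of $\HH_{-\n}$ on the localization sequence attached to $\Z$, and then to bootstrap the resulting statement. First I would reduce to the Kawatani--Okawa setup. Since $\mathcal{B}$ is admissible, the inclusion $\iota_{\mathcal{B}}\colon\mathcal{B}\hookrightarrow\Perf(\X)$ has a dg right adjoint, so $\HH_{-\n}(\iota_{\mathcal{B}})$ is split injective into $\HH_{-\n}(\Perf(\X))\cong H^0(\X,\omega_{\X})$ (Hochschild--Kostant--Rosenberg); hence the hypothesis $\HH_{-\n}(\mathcal{B})\neq0$ forces $H^0(\X,\omega_{\X})\neq0$, the linear system $\vert\omega_{\X}\vert$ is nonempty, and its base locus $\Z$ is a proper closed subset. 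Kawatani--Okawa then gives that either every object of $\mathcal{A}$ is supported on $\Z$, or every object of $\mathcal{B}$ is; I will exclude the latter. Granting that, Kawatani--Okawa also yields $\k(\x)\in\mathcal{B}$ for $\x\in\X\setminus\Z$, the same argument applied to $\mathcal{A}$ (which then lies in $\Perf_{\Z}(\X)$) gives $\HH_{-\n}(\mathcal{A})=0$, and additivity \cite{K} gives $\HH_{-\n}(\Perf(\X))\cong\HH_{-\n}(\mathcal{B})$.

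The heart of the matter is to prove that the map $\HH_{-\n}(\Perf_{\Z}(\X))\to\HH_{-\n}(\Perf(\X))$ induced by inclusion vanishes: if every object of $\mathcal{B}$ were supported on $\Z$ then $\mathcal{B}\subseteq\Perf_{\Z}(\X)$ compatibly on enhancements, the split injection $\HH_{-\n}(\mathcal{B})\hookrightarrow\HH_{-\n}(\Perf(\X))$ would factor through this zero map, and we would get $\HH_{-\n}(\mathcal{B})=0$, a contradiction. To prove the vanishing I would enhance $\Perf(\X)$ canonically and $\Perf_{\Z}(\X)$ by its unique enhancement \cite{Antieau2018OnTU}, compatibly, observe that
\[
\Perf_{\Z}(\X)\;\longrightarrow\;\Perf(\X)\;\longrightarrow\;\Perf(\X\setminus\Z)
\]
is a localization sequence of dg categories, and apply Hochschild homology --- a localizing invariant \cite{KELLER19991}. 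The resulting long exact sequence reduces the claim to injectivity of $\HH_{-\n}(\Perf(\X))\to\HH_{-\n}(\Perf(\X\setminus\Z))$, which, by HKR on both sides, is the restriction $H^0(\X,\omega_{\X})\to H^0(\X\setminus\Z,\omega_{\X}\vert_{\X\setminus\Z})$, injective since $\X$ is integral and $\X\setminus\Z$ is dense. This yields the first three assertions.

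For the last statement I would argue by contradiction. If $\mathcal{B}$ is $\n$-Calabi--Yau ($S_{\mathcal{B}}\cong[\n]$) and $\mathcal{B}=\langle\mathcal{B}_1,\mathcal{B}_2\rangle$ nontrivially, Serre duality inside $\mathcal{B}$ makes $\Hom$ vanish in both directions between $\mathcal{B}_1$ and $\mathcal{B}_2$, so the decomposition is orthogonal, $S_{\mathcal{B}}$ restricts to the Serre functor of each $\mathcal{B}_i$, and each $\mathcal{B}_i$ is again $\n$-Calabi--Yau and admissible --- hence smooth and proper --- in $\Perf(\X)$. Since $\HH_{-\n}(\mathcal{B}_1)\oplus\HH_{-\n}(\mathcal{B}_2)\cong\HH_{-\n}(\mathcal{B})\cong H^0(\X,\omega_{\X})\neq0$, one summand, say $\HH_{-\n}(\mathcal{B}_1)$, is nonzero; applying the part already proved to $\Perf(\X)=\langle\langle\mathcal{A},\mathcal{B}_2\rangle,\mathcal{B}_1\rangle$ forces $\HH_{-\n}(\langle\mathcal{A},\mathcal{B}_2\rangle)=0$, hence $\HH_{-\n}(\mathcal{B}_2)=0$. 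But for a nonzero smooth proper $\n$-Calabi--Yau dg category, Van den Bergh duality identifies $\HH_{-\n}(\mathcal{B}_2)$ with $\HH^{0}(\mathcal{B}_2)$, which contains $\mathrm{id}_{\mathcal{B}_2}\neq0$; so $\mathcal{B}_2=0$ and $\mathcal{B}$ is indecomposable. I expect the main obstacle to be the vanishing in the second paragraph: realizing the localization sequence at the level of dg enhancements (where the uniqueness results for $\Perf_{\Z}(\X)$ and $\Perf(\X)$ are used), confirming that Hochschild homology is a localizing invariant in this generality, and applying HKR to the smooth but possibly non-proper variety $\X\setminus\Z$; the Calabi--Yau step further rests on Van den Bergh duality.
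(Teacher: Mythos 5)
Your argument is correct, and while it reaches the same conclusions it is organized around a genuinely different mechanism. Where the paper first proves $\HH_{-\n}(\Perf_{\Z}(\X))=0$ outright via the supported HKR formula $\HH_{\i}(\Perf_{\Z}(\X))\cong\bigoplus_{\p-\q=\i}\H^{\p}_{\Z}(\X,\Omega^{\q}_{\X})$ (Theorem \ref{geometryformula}, Corollary \ref{-nHoch}), then constructs the induced semi-orthogonal decomposition $\Perf_{\Z}(\X)=\langle\mathcal{A}_{\Z},\mathcal{B}\rangle$ to invoke additivity, you prove only the weaker statement that the map $\HH_{-\n}(\Perf_{\Z}(\X))\to\HH_{-\n}(\Perf(\X))$ vanishes, by running Keller's localization triangle for $\Perf_{\Z}(\X)\to\Perf(\X)\to\Perf(\X\setminus\Z)$ and observing that the restriction $\H^{0}(\X,\omega_{\X})\to\H^{0}(\X\setminus\Z,\omega_{\X})$ is injective. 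This lets you bypass both the induced SOD on $\Perf_{\Z}(\X)$ and the full computation of $\HH_{\bullet}(\Perf_{\Z}(\X))$, since the split injection $\HH_{-\n}(\mathcal{B})\hookrightarrow\HH_{-\n}(\Perf(\X))$ then factors through a zero map; the price is that you need HKR, naturally in the restriction functor, on the smooth but non-proper open $\X\setminus\Z$, where the paper instead packages the same local information as cohomology with supports on $\X$. Your indecomposability step also differs: the paper applies the already-proved part to the two regroupings $\langle\langle\mathcal{A},\mathcal{B}_{1}\rangle,\mathcal{B}_{2}\rangle$ and $\langle\langle\mathcal{A},\mathcal{B}_{2}\rangle,\mathcal{B}_{1}\rangle$ (legitimate because $\mathcal{B}_{1}\perp\mathcal{B}_{2}$) to land $\k(\x)$ in both $\mathcal{B}_{1}$ and $\mathcal{B}_{2}$, whence $\k(\x)=0$; you instead force $\HH_{-\n}(\mathcal{B}_{2})=0$ and kill $\mathcal{B}_{2}$ by Van den Bergh duality for smooth proper $\n$-Calabi--Yau categories (Kuznetsov's \cite[Corollary 5.4]{Kuznetsov_2019}, the paper's Lemma \ref{Kuznetsovsod}). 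Both derivations of the contradiction are valid; yours stays entirely within Hochschild homology while the paper's reuses the geometric output about skyscraper sheaves.
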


\begin{rem}
  Note that the statement still holds when changing the role of $\mathcal{A}$ and $\mathcal{B}$. $\mathcal{A}$ should be regarded as the smaller piece in the components of the semi-orthogonal decomposition.
\end{rem}

 Surprisingly, it is closed related to a conjecture proposed by Kuznetsov that any $\n$-Calabi-Yau category of $\Perf(\X)$ ($\dim\X=\n$) is equivalent to a derived category of variety $\Y$, and $\X$ is blow-up of $\Y$ \cite{Kuznetsov_2016}. The essence of the conjecture is that the behaviour of semi-orthogonal decompositions that contain $\n$-Calabi-Yau category as a component should be the same with that of
 the examples constructed from blow-ups. However, Kuznetsov's conjecture is far from being true, see ``Compact $Hyper-K\ddot{a}hler$ Categories'' \cite[Section 5]{rol2015compact}. It was shown in the paper that there are infinite many geometric $4$-folds containing $4$-Calabi--Yau (connected) categories which are not derived category of a projective smooth variety. Hence, we should not expect to recover geometric information by less categorical information. However, we can achieve this if there are more assumptions about the Calabi-Yau subcategory for certain varieties.

\begin{thm}(= Theorem \ref{Main1})\label{Main}
  Let $\Y$ be a smooth projective Calabi-Yau variety of dimension $\n$, $\f:\X\rightarrow \Y$ is the bolw-up of $\Y$ over points $\{\y_{\i}\}$. Define the distinguish objects
  $\D_{\i}=\L\f^{\ast}\k(\y_{\i})$. Let $\mathcal{B}$ be an $\n$ Calabi-Yau admissible subcategory of $\Perf(\X)$. If all $\D_{\i}\in \mathcal{B}$, then $\mathcal{B}=\L\f^{\ast}\Perf(\Y)$.
\end{thm}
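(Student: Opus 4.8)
The plan is to build the equivalence $\mathcal{B}=\L\f^{\ast}\Perf(\Y)$ out of the semi-orthogonal decomposition associated to the blow-up and then use the hypothesis on the distinguished objects $\D_{\i}$ to pin down which admissible subcategory $\mathcal{B}$ must be. Recall that for the blow-up $\f\colon\X\to\Y$ of $\Y$ at the points $\{\y_{\i}\}$ with exceptional divisors $\E_{\i}\cong\mathbb{P}^{\n-1}$, Orlov's blow-up formula gives a semi-orthogonal decomposition
\[
  \Perf(\X)=\langle \L\f^{\ast}\Perf(\Y),\ \Phi_1(\Perf(\mathrm{pt})),\dots,\Phi_{\n-1}(\Perf(\mathrm{pt}))\rangle,
\]
where the extra components are exceptional objects supported on the $\E_{\i}$. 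In particular $\L\f^{\ast}\Perf(\Y)$ is an $\n$-Calabi--Yau admissible subcategory (its Serre functor is $-\otimes\omega_{\Y}[\n]=[\n]$ since $\Y$ is Calabi--Yau and $\L\f^{\ast}$ is fully faithful and monoidal for $\omega_{\Y}$), and it contains every $\D_{\i}=\L\f^{\ast}\k(\y_{\i})$. So $\L\f^{\ast}\Perf(\Y)$ is a candidate for $\mathcal{B}$, and the content of the theorem is uniqueness: any $\n$-CY admissible $\mathcal{B}$ containing all the $\D_{\i}$ equals this one.

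First I would set $\mathcal{C}=\L\f^{\ast}\Perf(\Y)$ and analyze the relative position of $\mathcal{B}$ and $\mathcal{C}$ inside $\Perf(\X)$. The key point is that the exceptional objects $\Phi_{\j}(\k(\y_{\i}))$ completing the blow-up decomposition are all supported on the exceptional locus $\bigcup_{\i}\E_{\i}$, which maps to the finite set $\{\y_{\i}\}$, hence lives over a proper closed subset $\Z$; meanwhile, away from $\Z$ the morphism $\f$ is an isomorphism, so $\X\setminus\Z$ is a Calabi--Yau open subset and $\omega_{\X}$ is trivial there, forcing $\2B\s\vert\omega_{\X}\vert\subseteq\bigcup_{\i}\E_{\i}$. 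Now apply Theorem~\ref{A1} to the decomposition $\Perf(\X)=\langle\mathcal{B},\mathcal{B}^{\perp}\rangle$ (or $\langle {}^{\perp}\mathcal{B},\mathcal{B}\rangle$): since $\mathcal{B}$ is $\n$-Calabi--Yau, $\HH_{-\n}(\mathcal{B})\neq 0$ (the Serre functor being $[\n]$ makes the identity contribute in degree $-\n$), so by the theorem the complementary piece has all objects supported on $\Z=\2B\s\vert\omega_{\X}\vert$, which is contained in the exceptional locus. Symmetrically, applying the same reasoning to $\mathcal{C}$, the complement of $\mathcal{C}$ is also supported on the exceptional locus. Thus both $\mathcal{B}^{\perp}$ and $\mathcal{C}^{\perp}$ are supported on $\bigcup\E_{\i}$, i.e. inside $\Perf_{\bigcup\E_{\i}}(\X)$.

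Next I would exploit the localization: let $\U=\X\setminus\bigcup_{\i}\E_{\i}\cong\Y\setminus\{\y_{\i}\}$, and consider the Verdier quotient $\Perf(\X)/\Perf_{\bigcup\E_{\i}}(\X)$, which is a form of $\Perf(\U)$. Because $\mathcal{B}^{\perp}$ and $\mathcal{C}^{\perp}$ both lie in $\Perf_{\bigcup\E_{\i}}(\X)$, the restriction functors identify both $\mathcal{B}$ and $\mathcal{C}$ with (a thick subcategory generating) $\Perf(\U)$ under localization — and since $\L\f^{\ast}$ restricts to the identity on $\U$, the images agree. The remaining task is to upgrade ``$\mathcal{B}$ and $\mathcal{C}$ have the same image in the quotient and both are admissible'' to ``$\mathcal{B}=\mathcal{C}$ on the nose''. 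Here I would use that an admissible subcategory is determined by its image in the Verdier localization together with how it interacts with the kernel $\Perf_{\bigcup\E_{\i}}(\X)$: concretely, $\mathcal{B}=\{E\in\Perf(\X): \RHom(E,-)\text{ and }\RHom(-,E)\text{ kill }\mathcal{B}^{\perp},{}^{\perp}\mathcal{B}\}$, and the objects $\D_{\i}$ together with objects pulled back from $\U$ generate. The hypothesis $\D_{\i}\in\mathcal{B}$ is exactly what forces $\mathcal{B}\supseteq\langle\D_{\i},\ \text{lifts from }\U\rangle=\L\f^{\ast}\Perf(\Y)=\mathcal{C}$; combined with the equality after localization and the fact that both have orthogonal complements inside the same kernel, I would conclude $\mathcal{B}=\mathcal{C}$.

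The main obstacle, I expect, is the last step: passing from an equality of localizations plus containment of generators to an actual equality of admissible subcategories requires care, because a priori $\mathcal{B}$ could differ from $\mathcal{C}$ by some ``twist'' supported on the exceptional divisors even while having the right image in $\Perf(\U)$. The resolution should be that the $\D_{\i}\in\mathcal{B}$ hypothesis rigidifies this: $\L f^{\ast}\k(\y_{\i})$ is the unique (up to the Calabi--Yau structure) object of $\mathcal{C}$ over $\y_{\i}$, and its presence in $\mathcal{B}$, together with $\mathcal{B}$ being $\n$-CY (hence closed under the Serre functor $[\n]$ and admitting no nonzero maps to the exceptional objects $\Phi_{\j}(\k)$ by a degree/Calabi--Yau computation), forbids any such exceptional-divisor correction. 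I would make this precise by showing $\mathcal{B}\subseteq\mathcal{C}$ directly — any $E\in\mathcal{B}$ has $\RHom(E,\Phi_{\j}(\k(\y_{\i})))=0$ for all $\i,\j$ because otherwise the $\n$-CY duality on $\mathcal{B}$ would produce a nonzero map $\Phi_{\j}(\k(\y_{\i}))[-\n]\to E$, contradicting that $\Phi_{\j}(\k(\y_{\i}))\in\mathcal{C}^{\perp}$ is left-orthogonal to $\mathcal{B}$'s generator $\D_{\i}\in\mathcal{B}$ — so $E\in{}^{\perp}\langle\Phi_{\j}(\k)\rangle=\mathcal{C}$, giving $\mathcal{B}\subseteq\mathcal{C}$, and then $\HH_{-\n}(\mathcal{C})\cong\HH_{-\n}(\mathcal{B})$ from Theorem~\ref{A1} forces equality.
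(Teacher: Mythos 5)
Your overall plan correctly identifies the relevant ingredients (Orlov's blow-up formula, Theorem~\ref{A1}, support on the exceptional locus), but the decisive step — showing $\mathcal{B}\subseteq\mathcal{C}:=\L\f^{\ast}\Perf(\Y)$ — has a genuine gap. You argue that for $E\in\mathcal{B}$ a nonzero $\Hom(E,\Phi_{\j}(\k(\y_{\i})))$ would, ``by the $\n$-CY duality on $\mathcal{B}$,'' produce a nonzero map $\Phi_{\j}(\k(\y_{\i}))[-\n]\to E$. But the Calabi--Yau Serre functor of the admissible subcategory $\mathcal{B}$ controls only $\Hom$-spaces \emph{between objects of} $\mathcal{B}$: for $E,F\in\mathcal{B}$ one has $\Hom(E,F)\cong\Hom(F,E[\n])^{\ast}$. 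It gives no information about $\Hom(E,\G)$ for $\G$ outside $\mathcal{B}$, and $\Phi_{\j}(\k(\y_{\i}))$ lies in $\mathcal{C}^{\perp}$, not in $\mathcal{B}$. (Serre duality on all of $\Perf(\X)$ would instead replace $E$ by $E\otimes\omega_{\X}[\n]$, which need not be in $\mathcal{B}$.) The localization paragraph, as you yourself note, only gets you ``same image in the quotient'' and does not upgrade to an honest equality of admissible subcategories. Finally, even granting $\mathcal{B}\subseteq\mathcal{C}$, the closing line ``$\HH_{-\n}(\mathcal{C})\cong\HH_{-\n}(\mathcal{B})$ forces equality'' is not by itself a valid conclusion; one must still show the complement of $\mathcal{B}$ inside $\mathcal{C}$ vanishes (e.g.\ via the Kawatani--Okawa obstruction since $\omega_{\Y}$ is base-point-free, or via additivity of $\HH^{0}$).

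The paper's route sidesteps these difficulties: it applies Theorem~\ref{A1} to see that the skyscraper sheaves $\k(\x)=\L\f^{\ast}\k(\y)$ for $\y\neq\y_{\i}$ already lie in $\mathcal{B}$, adds the hypothesis $\D_{\i}=\L\f^{\ast}\k(\y_{\i})\in\mathcal{B}$, and then invokes the spanning-class lemma (Lemma~\ref{lem 4.19}) to conclude that the whole image $\L\f^{\ast}\Perf(\Y)$ lands in $\mathcal{B}$. Since $\mathcal{B}$ is Calabi--Yau, this embedding yields an orthogonal decomposition of $\mathcal{B}$, and additivity of $\HH^{0}$ combined with $\HH^{0}(\mathcal{B})\cong\HH^{0}(\Perf(\X))\cong\k$ forces the complement to vanish. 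You would need a correct replacement for your CY-duality step, and the spanning-class argument is precisely such a replacement.
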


\begin{rem}\ \\
(1) The assumption is something like marking points of the category $\mathcal{B}$.   \\
(2) Assume there is an element $\phi$ of $\mathsf{Aut}(\Perf(\X))$ which maps the distinguished objects $\D_{\i}=\L\f^{\ast}(\k(\x_{\i}))$ to the one whose support is disjoint to exceptional divisors (base locus of $\K_{\X}$). Then the assumption in the theorem always holds for $\mathcal{B}$, see Theorem \ref{D}.

\end{rem}

Even though the original Kuznetsov conjecture failed, we obtain some
interesting theorems which satisfy the original intuition. Thus, we expect that the Kuznetsov conjecture is still true for surfaces because the birational geometry is clearer for surfaces.

\begin{cor}\ \\
\begin{enumerate}
  \item (= Theorem \ref{B})\label{B1}) Let $\X$ be a projective smooth variety of dimension $\n$. Suppose $\dim \H^{0}(\X,\omega_{\X}) \geq 2$, then any $\n$-Calabi--Yau admissible subcategory of $\Perf(\X)$ is not a derived category of a smooth projective variety.\\
  \item (= Theorem \ref{C}\label{C1}) Let $\X$ be a projective smooth variety. Suppose its derived category $\Perf(\X)$ admits a
 nontrivial semi-orthogonal decomposition with component $\mathcal{B}$ being  $\n$-$\C\Y$ category. Here $\n$ is dimension of $\X$. Then, the base locus $\Z$ of the canonical bundle is a proper closed subset. Furthermore, any skyscraper sheaves $\k(\x)$ with $\x\in \X\setminus \Z$ belongs to $\mathcal{B}$.
 \end{enumerate}
\end{cor}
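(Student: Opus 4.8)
Both parts will follow from Theorem~\ref{A1} once we feed in one fact about Hochschild homology of Calabi--Yau categories. Write $\n=\dim\X$. The key input is: \emph{if $\mathcal C$ is a nonzero $\n$-Calabi--Yau smooth proper $\dg$ category, then $\HH_{-\n}(\mathcal C)\neq 0$.} I would establish this by combining the nondegeneracy of the Mukai pairing $\HH_{i}(\mathcal C)\otimes\HH_{-i}(\mathcal C)\to\k$ for a smooth proper $\mathcal C$ (which gives $\HH_{-\n}(\mathcal C)\cong\HH_{\n}(\mathcal C)^{\vee}$) with the shift identification $\HH_{\bullet}(\mathcal C)\cong\HH^{\n-\bullet}(\mathcal C)$ valid for an $\n$-Calabi--Yau $\dg$ category (under which $\HH_{\n}(\mathcal C)\cong\HH^{0}(\mathcal C)\ni\mathrm{id}_{\mathcal C}$, hence is nonzero). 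Since an admissible subcategory of $\Perf(\X')$ for a smooth projective $\X'$ is itself smooth and proper, this input applies to every nonzero $\n$-Calabi--Yau admissible subcategory. Finally I record the standard computation $\dim_{\k}\HH_{-\n}(\Perf(\X))=\dim_{\k}H^{\n}(\X,\mathcal O_{\X})=h^{0}(\X,\omega_{\X})$ (Hochschild--Kostant--Rosenberg plus Serre duality): so $\HH_{-\n}(\Perf(\X))\neq 0$ precisely when $\omega_{\X}$ is effective, i.e. when the base locus $\Z$ of $\vert\omega_{\X}\vert$ is a proper closed subset.

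\emph{Part (2).} Since $\langle\mathcal A,\mathcal B\rangle$ is nontrivial, $\mathcal B\neq 0$; as it is an admissible $\n$-Calabi--Yau subcategory, the input above gives $\HH_{-\n}(\mathcal B)\neq 0$. Theorem~\ref{A1} then applies to this decomposition and yields both $\HH_{-\n}(\Perf(\X))\cong\HH_{-\n}(\mathcal B)\neq 0$ --- so $\Z\subsetneq\X$ is a proper closed subset by the last sentence above --- and the statement that $\k(\x)\in\mathcal B$ for every $\x\in\X\setminus\Z$ (together with the inclusion of the supports of objects of $\mathcal A$ in $\Z$). This is exactly part (2).

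\emph{Part (1).} Suppose, for contradiction, that some $\n$-Calabi--Yau admissible subcategory $\mathcal B\subseteq\Perf(\X)$ is equivalent to $\DbY=\Perf(\Y)$ for a smooth projective variety $\Y$. Comparing Serre functors --- $S_{\mathcal B}=[\n]$ on one side, $(-)\otimes\omega_{\Y}[\dim\Y]$ on the other --- and evaluating at $\mathcal O_{\Y}$ forces $\dim\Y=\n$ and $\omega_{\Y}\cong\mathcal O_{\Y}$, so $h^{0}(\Y,\omega_{\Y})=h^{0}(\Y,\mathcal O_{\Y})=1$, $\Y$ being a connected variety (alternatively, were $\Y$ allowed disconnected, Theorem~\ref{A1} would force the $\n$-Calabi--Yau category $\mathcal B$ to be indecomposable, hence $\Y$ connected). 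If $\mathcal B=\Perf(\X)$, the same comparison gives $\omega_{\X}\cong\mathcal O_{\X}$, hence $h^{0}(\X,\omega_{\X})=1$, contradicting the hypothesis $h^{0}(\X,\omega_{\X})\geq 2$. Otherwise $\mathcal B$ is a proper admissible subcategory, so $\Perf(\X)=\langle{}^{\perp}\mathcal B,\mathcal B\rangle$ is a nontrivial semi-orthogonal decomposition, and since $\HH_{-\n}(\mathcal B)\neq 0$, Theorem~\ref{A1} gives $\HH_{-\n}(\Perf(\X))\cong\HH_{-\n}(\mathcal B)=\HH_{-\n}(\Y)$; but the left-hand side has dimension $h^{0}(\X,\omega_{\X})\geq 2$ and the right-hand side has dimension $h^{0}(\Y,\omega_{\Y})=1$, a contradiction. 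Hence no such $\mathcal B$ exists, proving part (1).

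The Serre-functor comparison and the Hochschild--Kostant--Rosenberg/Serre-duality computation are routine. The step I expect to require the most care is the displayed input that a nonzero $\n$-Calabi--Yau admissible subcategory has $\HH_{-\n}\neq 0$: it depends on admissible subcategories of $\Perf$ of a smooth projective variety being smooth and proper (so that the Mukai pairing makes sense and is nondegenerate) and on the identification $\HH_{\bullet}\cong\HH^{\n-\bullet}$ for Calabi--Yau $\dg$ categories. A secondary subtlety is that Theorem~\ref{A1} does not cover the degenerate case $\mathcal B=\Perf(\X)$, which must be excluded by hand --- and it is exactly here that the hypothesis $h^{0}(\X,\omega_{\X})\geq 2$, rather than merely $\geq 1$, is used.
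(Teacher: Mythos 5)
Your proof is correct and follows essentially the same route as the paper. Both parts reduce to Theorem~\ref{A1} after feeding in the nonvanishing $\HH_{-\n}(\mathcal B)\neq 0$ for an admissible $\n$-Calabi--Yau subcategory of a smooth projective variety; the paper simply cites this as Lemma~\ref{Kuznetsovsod} (Kuznetsov, Corollary 5.4), whereas you re-derive it from smooth-properness, the Mukai pairing, and the Calabi--Yau shift identification. One small remark on that re-derivation: in the paper's HKR convention (Theorem~\ref{geometryformula}, where $\HH_{-\n}(\Perf(\X))\cong\H^{0}(\X,\omega_{\X})$), the Calabi--Yau identification reads $\HH_{\bullet}\cong\HH^{\bullet+\n}$, so $\HH_{-\n}\cong\HH^{0}\ni\mathrm{id}$ directly and the Mukai-pairing detour through $\HH_{\n}$ is unnecessary; your stated form $\HH_{\bullet}\cong\HH^{\n-\bullet}$ corresponds to a different degree convention, but since the relevant spaces are all one-dimensional for a connected CY category, the conclusion comes out the same. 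For Part (1), the paper passes through $\HH^{0}(\mathcal B)$ and the notion of connectedness of an admissible subcategory, whereas you compare dimensions of $\HH_{-\n}$ directly after a Serre-functor argument that forces $\dim\Y=\n$ and $\omega_{\Y}\cong\mathcal O_{\Y}$; these are equivalent precisely because $\HH_{-\n}\cong\HH^{0}$ for CY-$\n$, and your version has the virtue of making explicit the degenerate case $\mathcal B=\Perf(\X)$, which the paper leaves implicit (it is excluded by the hypothesis $\h^{0}(\X,\omega_{\X})\geq 2$, since $\Perf(\X)$ being CY-$\n$ would force $\omega_{\X}\cong\mathcal O_{\X}$).
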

   One of interesting problem is to find weak Calabi--Yau categories which are non-admissible subcategory of $\DbY$, where $\Y$ is a projective smooth variety. It turns out that there are series of examples.
 \begin{thm}(= Theorem \ref{CY})\label{CY1}
  Let $\X$ be a projective smooth variety of dimension $\n$. $\Z$ is a closed subscheme of $\X$.
  \begin{enumerate}
    \item  If $\X$ is a Calabi-Yau variety, then $\Perf_{\Z}(\X)$ is a weak Calabi-Yau category, and it is not an admissible subcategory of derived category of projective smooth varieties.
    \item If $\Z$ consist of points, then $\Perf_{\Z}(\X)$ is a weak Calabi-Yau category, and it is not an admissible subcategory of derived category of projective smooth varieties.
  \end{enumerate}
\end{thm}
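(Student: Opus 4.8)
The plan is to establish the two assertions — that $\Perf_{\Z}(\X)$ is weakly $\n$-Calabi--Yau, and that it is not equivalent to an admissible subcategory of $\DbY$ for any smooth projective $\Y$ — by two essentially independent arguments: the first via Serre duality on $\X$, the second via the principle that $\Perf_{\Z}(\X)$ is proper but not smooth as a $\dg$ category. Throughout I assume $\X$ is connected (as the word ``variety'' suggests) and that $\Z$ is a nonempty proper closed subscheme: if $\Z=\varnothing$ then $\Perf_{\Z}(\X)=0$, and if $\Z=\X$ then $\Perf_{\Z}(\X)=\Perf(\X)=\DbX$, so in these degenerate cases the statement is vacuous or false; in case (2) one has $\Z\subsetneq\X$ automatically as soon as $\n\geq 1$.

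\emph{The weak Calabi--Yau property.} Since $\X$ is smooth projective of dimension $\n$, the category $\Perf(\X)=\DbX$ has Serre functor $\mathbb{S}_{\X}=(-)\otimes\omega_{\X}[\n]$. Tensoring by the line bundle $\omega_{\X}$ preserves supports, so $\mathbb{S}_{\X}$ restricts to an autoequivalence of $\Perf_{\Z}(\X)$; and $\Perf_{\Z}(\X)$ is $\Hom$-finite because $\Z$, being closed in the projective $\X$, is proper, so that $\RHom_{\X}(E,F)$ has finite-dimensional total cohomology for all $E,F\in\Perf_{\Z}(\X)$. Hence $\mathbb{S}_{\X}|_{\Perf_{\Z}(\X)}$ is a Serre functor for $\Perf_{\Z}(\X)$. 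In case (1), $\omega_{\X}\cong\mathcal{O}_{\X}$, so this Serre functor is literally $[\n]$. In case (2), $\Z$ is a finite set of closed points in a neighbourhood of each of which $\omega_{\X}$ is trivial, so $E\otimes\omega_{\X}\cong E$ for every $E\in\Perf_{\Z}(\X)$ and the Serre functor is again (non-canonically) isomorphic to $[\n]$; equivalently, the Koszul resolution of $\k(\x)$ identifies $\Perf_{\{\x\}}(\X)$ with $\perf$ of the $\dg$ endomorphism algebra of $\k(\x)$, whose cohomology is the exterior algebra $\Lambda^{\bullet}\mathsf{T}_{\x}\X$ with $\Lambda^{i}$ in degree $i$ — a graded symmetric Frobenius algebra, whose Nakayama functor is the shift $[\n]$ — and $\Perf_{\Z}(\X)=\bigoplus_{\x\in\Z}\Perf_{\{\x\}}(\X)$. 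In all cases $\Perf_{\Z}(\X)$ admits a Serre functor isomorphic to $[\n]$, i.e.\ is weakly $\n$-Calabi--Yau.

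\emph{Non-admissibility.} I would use that every admissible subcategory of the derived category of a smooth projective variety is smooth and proper as a $\dg$ category, together with the uniqueness of the enhancement of $\Perf_{\Z}(\X)$ recorded earlier \cite{Antieau2018OnTU}; it then suffices to show that $\Perf_{\Z}(\X)$ is proper but \emph{not} smooth. Properness was observed above. For non-smoothness I would invoke the computation of the Hochschild homology of the derived category with support obtained earlier in this note: $\HH_{\bullet}(\Perf_{\Z}(\X))$ is obtained from the Hochschild--Kostant--Rosenberg complex $\bigoplus_{q}\Omega^{q}_{\X}[q]$ by taking cohomology with supports in $\Z$, and since $\Z$ is a nonempty proper closed subscheme these local cohomology groups are infinite-dimensional over the ground field, so $\HH_{\bullet}(\Perf_{\Z}(\X))$ is infinite-dimensional. (Alternatively, the localization triangle $\Perf_{\Z}(\X)\to\Perf(\X)\to\Perf(\X\setminus\Z)$ of $\dg$ categories identifies $\HH_{\bullet}(\Perf_{\Z}(\X))$ with the fibre of $\HH_{\bullet}(\Perf(\X))\to\HH_{\bullet}(\Perf(\X\setminus\Z))$, in which the source is finite-dimensional while the target is not, the smooth variety $\X\setminus\Z$ being non-proper.) On the other hand a smooth and proper $\dg$ category has finite-dimensional Hochschild homology; hence $\Perf_{\Z}(\X)$ is not smooth, and therefore cannot be equivalent to an admissible subcategory of $\DbY$ for any smooth projective $\Y$. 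In case (2) this admits a self-contained shortcut: $\Perf_{\{\x\}}(\X)\simeq\perf(\Lambda^{\bullet}\mathsf{T}_{\x}\X)$, and for $\n\geq 1$ the exterior algebra $\Lambda^{\bullet}\mathsf{T}_{\x}\X$ has infinite global dimension — its residue field has infinite-dimensional self-$\Ext$ — so this $\dg$ category is not smooth.

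\emph{The main obstacle.} Serre duality and the implication ``admissible in a smooth proper $\dg$ category $\Rightarrow$ smooth and proper'' are soft inputs. The substantive point is the failure of smoothness in the positive-dimensional case of (1): this rests on the Hochschild-homology-with-support computation and the infinitude of the local cohomology of $\X$ along $\Z$, which is precisely the machinery developed earlier in the paper; the exterior-algebra model only disposes of the case where $\Z$ is finite.
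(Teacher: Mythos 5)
Your first half (the Serre functor restricts to $\Perf_{\Z}(\X)$, and is $[\n]$ in both cases) coincides with the paper's argument, modulo your useful clarification of the degenerate cases $\Z=\varnothing$ and $\Z=\X$ which the paper leaves implicit. The Koszul/exterior-algebra model for case (2) is an additional, correct, self-contained gloss.

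For non-admissibility you take a genuinely different route. The paper invokes \cite[Corollary 5.4]{Kuznetsov_2019} (an admissible $\m$-Calabi--Yau subcategory of $\DbY$ has $\HH_{-\m}\neq 0$) and contradicts it with the pointwise vanishing $\HH_{-\n}(\Perf_{\Z}(\X))=\H^{0}_{\Z}(\X,\Omega^{\n}_{\X})=0$ from Corollary \ref{-nHoch}. You instead show $\Perf_{\Z}(\X)$ is \emph{not smooth} — via infinite-dimensional total Hochschild homology (local cohomology along a positive-codimension $\Z$, or the localization fibre sequence against the non-proper $\X\setminus\Z$, or in case (2) the infinite global dimension of $\Lambda^{\bullet}\mathsf{T}_{\x}\X$) — and then invoke Orlov's result \cite{Orlov2016SMOOTHAP} that admissible subcategories of $\DbY$ with $\Y$ smooth projective are smooth and proper. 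The paper explicitly anticipates your route in the remark immediately following the statement (``probably the easiest way to prove these facts is to prove that $\Perf_{\Z}(\X)$ is not smooth, then by Orlov's result\ldots''), but deliberately executes the $\HH_{-\n}$ argument instead, presumably because it showcases the paper's central tool (the support Hochschild homology formula) and dovetails with Theorem \ref{A}. Your approach is somewhat more robust: it does not use the Calabi--Yau structure at all in the non-admissibility step, only non-smoothness, so it would dispose of $\Perf_{\Z}(\X)$ for arbitrary proper nonempty $\Z$ without any hypothesis on $\omega_{\X}$. The trade-off is that your argument needs the (correct, but slightly heavier) fact that the relevant local cohomology is infinite-dimensional, whereas the paper only needs the single vanishing $\H^{0}_{\Z}(\X,\omega_{\X})=0$, which is immediate.

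One small point of rigor worth flagging in your write-up: when you pass from ``$\HH_{\bullet}(\Perf_{\Z}(\X))$ is infinite-dimensional'' to ``not smooth,'' what you actually use is the contrapositive of ``smooth and proper $\Rightarrow$ finite-dimensional $\HH_{\bullet}$.'' Since you have already established properness, this is fine, but it is worth stating that both hypotheses enter; a smooth non-proper $\dg$ category can of course have infinite-dimensional Hochschild homology.
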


\begin{rem}
We must note that probably the easiest way to prove these facts is to prove that $\Perf_{\Z}(\X)$ is not smooth, then by Orlov' result \cite{Orlov2016SMOOTHAP}, it can not be an admissible subcategory of $\Perf(\Y)$ for some smooth projective $\Y$.
\end{rem}
 As for proof of Theorem \ref{A1}, we use recent theorem proved by Kotaro Kawatani and Shinnosuke Okawa (see Proposition \ref{sodsupport}) that if there is a semi-orthogonal decomposition $\Perf(\X)=\langle \1A,\B\rangle$, then either support of all objects in $\1A$ are in $\Z:=\2B\s\vert \omega_{\X}\vert$ or support of all objects in $\B$ are in $\Z$. Then we can localize the semi-orthogonal decomposition
of $\Perf(\X)$ to that of $\Perf_{\Z}(\X)$. Applying Hochschild homology theory of $\dg$ categories, we prove Theorem \ref{A1}. We essentially use the fact that $\HH_{-\n}(\Perf_{\Z}(\X))=0$, $\dim \X=\n$ for any proper closed subset $Z$ Corollary \ref{-nHoch}. Finally, using the fact that Hochschild homology of admissible $\n$-Calabi-Yau categories have non-vanishing $-\n$ Hochschild homology \cite[Corollary 5.4]{Kuznetsov_2019}, we have Theorem~\ref{CY1}.

\par

\subsection*{Notations}\label{note}
In this note, we assume $\X$ to be smooth projective over the base field $\k =\mathbb{C}$. Since $\DbX\cong \Perf(\X)$ for projective smooth variety $\X$, we won't distinguish $\DbX$ and $\Perf(\X)$. We say a $\k$-linear triangulated category to be weak Calabi-Yau if it admits Serre functor of shifting (the triangulated structure). It is not a classical definition of Calabi--Yau category.
\begin{acks}
The author would like to thank his supervisor Will Donovan for helpful disscusions. The author also thanks Baosen Wu, Dingxing Zhang, Jie Zhou, and Minghao Wang for helpful conversations. Partial results were obtained when the author attended The geometry of derived categories seminar, Liverpool University, UK. The author is grateful to Shinnosuke Okawa's talk at A.\ Bondal 60-th Birthday conference, 2021. It makes the author realize that Theorem \ref{A1} was proved by Dmitrii Pirozhkov \cite[Lemma 5.3]{pirozhkov2020admissible}.
\end{acks}

  \vspace{5mm}

\section{Hochschild Homology}
 In this section, we briefly recall the Hochschild homology for algebras, $\dg$ categories and unique enhanced triangulated categories.
As for notions of $\dg$ category, there is a great survey by B.\ Keller, on differential categories \cite{keller2006differential}. The Hochschild homology theory of $\dg$ algebras and $\dg$ categories is mainly from papers of B.\ Keller \cite{KELLER1998223} and \cite{KELLER19991}. The author introduces a Hochschild homology theory of unique enhanced triangulated categories, even though it may be well known for experts.
 \subsection{Hochschild homology of algebra}
  Given a $\k$ algebra $\A1$, we define the Hochschild complex of $\A1$ to be
                   $$\C(\A1):=\A1\otimes^{\L}_{\A1^{\e}}\A1 $$
and define Hochschild homology of $\A1$ to be $\HH_{\bullet}(\A1)=\H_{\bullet}(\C(\A1))=\H_{\bullet}(\A1\bigotimes^{\L}_{\A1^{\e}}\A1).$
Here $\A1^{\e}:=\A1\otimes \A1^{\op}$.
We can calculate derived tensor by bar resolution of $\A1$ as module $\A1^{\e}$ $\colon$ The degree $\n$ term of bar complex is $\A1\otimes \A1^{\bigotimes \n}\otimes \A1$, differential of the complex is defined as $$\d_{\n}(\a_{0}\otimes \a_{1}\otimes\cdots\otimes\a_{\n}\otimes \a_{\n+1} )=\sum_{\i=0}^{\n}(-1)^{\i}\a_{0}\otimes \a_{1}\otimes\cdots \otimes(\a_{\i}\cdot \a_{\i+1})\otimes \cdots \otimes\a_{\n+1}.$$

\begin{rem}
  Using bar resolution, the Hochschild complex $\C(\A1)$ is as follows: the degree $\n$ part is $\A1^{\otimes\n+1}$. The differential $$\d_{\n}(\a_{0}\otimes \a_{2}\otimes \cdots \otimes \a_{\n})=\oplus_{\i=0}^{\n}(-1)^{\i}\a_{0}\otimes \cdots\otimes\a_{\i}\a_{\i+1}\cdots \otimes \a_{\n}+(-1)^{\n}\a_{\n}\a_{0}\otimes \cdots \otimes \a_{\n-1}$$
\end{rem}

\begin{eg}\label{eg 1.1}
Consider the polynomial algebra $\A1=\k[\x_{1},\x_{2},\cdots,\x_{\n}]$. Then $$\HH_{\bullet}(\A1)=\oplus_{\i=0}^{\n}\Omega^{\i}(\A1).$$
\end{eg}

 For a differential graded algebra over $\k$, we still define the Hochschild complex in the same way, but it is a little subtle. The bar resolution is no longer a projective resolution as a differential graded algebra, but the total complex of bar resolution turns out to be a proper one. Thus for differential graded algebra $\Lambda$ over $\k$
 $$\HH_{\bullet}(\Lambda)\colon=\H_{\bullet}(\mathsf{Tot}(\mathsf{Bar}(\Lambda) \otimes \Lambda)).$$
 See B.\ Keller's paper \cite[section 1]{KELLER1998223} for a more explicit description.

 \subsection{Hochschild homology of differential graded category}
  \text Hochschild homology can be generalized to small (differential graded) categories. We sketch basic notions of the differential graded categories, the reader can refer to the survey, \cite[On differential graded categories]{keller2006differential}.

\begin{defn}
 The $\k$-linear category $\1A$ is called a $\dg$ category if
 $\Mor(\E,\F)$ is differential $\mathbb{Z}$-graded $\k$-vector spaces for every object $\E$, $\F$, $\G$ $\in$ $\1A$, and the compositions
 $$\Mor(\F,\E)\times \Mor(\G,\F)\rightarrow \Mor(\G,\E) $$
 are morphism of complexes and associative.
 Furthermore, there is a unit $\k\rightarrow \Mor(\E,\E)$. Note
that $\Mor(\E,\E)$ is a differential graded algebra because of the composition law.
\end{defn}

\begin{eg}\label{eg 1.3}
One of the basic example is $\C_{\dg}(\k)$, objects are complexes of $\k$ vector space, the morphism spaces
are redefined as follows$\colon$\\
Let $\E$, $\F$ $\in$ $\C_{\dg}(\k)$, define degree $\n$ piece to be $\Mor(\E,\F)(\n)\colon=\Pi \Hom(\E_{\i},\F_{\i+\n})$.
The $\n^{\
th}$ differential is given by $\d_{\n}(\f)= \d_{\E}\circ \f - (-1)^{\n}\f\circ \d_{\F}$, $\f\in \Mor(\E,\F)(\n)$.
\end{eg}

\begin{defn}
 We call $\F\colon \C\rightarrow \D$ a dg functor between dg category if it preserves degree of morphisms. We say $\F$ is quasi-equivalent if $\F$ induces isomorphisms on homologies of morphisms and equivalences of their homotopic categories.
\end{defn}

Consider the category of small $\dg$ categories, it is well known that there is a model structure with weak equivalence being quasi-equivalence\cite{tabuada2004une}\cite{article}. We denote
$\operatorname{Ho(dg-cat)}$ to be the localization of the category of small $\dg$ categories to quasi-equivalence. Recall that morphism of $\mathcal{A}$ to $\B$ in $\operatorname{Ho(dg-cat)}$ can be represented by $\mathcal{A}\leftarrow \mathcal{A}_{\cof}\rightarrow \mathcal{B}$.

Now back to the definition of Hochschild homology, the reader can refer to Bernhard Keller's paper \cite{KELLER19991}. Let $\mathcal{A}$ be a small $\dg$ category.
Consider the total complex $$\bigoplus_{\E_{0},\E_{1},\E_{2},\dots,\E_{\n}}  \quad\mathcal{A}(\E_{0},\E_{1})\otimes \mathcal{A}(\E_{1},\E_{2})\otimes\dots\otimes \mathcal{A}(\E_{\n},\E_{0})$$ where $\E_{0},\E_{1},\E_{2},\dots,\E_{\n}$ run over the set of  objects of $\mathcal{A}$. The tensor product is in the graded sense.
The vertical degeneracy differential is defined as follows:\\
$$\d_{\i}(\f_{0},\f_{1},\dots,\f_{\i-1},\f_{\i}\dots \f_{\n})=
\begin{cases}
 (\f_{0},\cdots,\f_{\i}\f_{\i-1},\cdots,\f_{\n})& \text{if} \quad \i < \n\\
(-1)^{\n+\d}(\f_{\n}\f_{0},\cdots,\f_{\n-1})& \text{if} \quad \i=\n
\end{cases}$$

The cyclic operator is given by $$\t_{\n}(\f_{\n-1},\dots,\f_{0})=(-1)^{\n+\d}(\f_{0},\dots,\f_{\n-1}).$$
$\d=\operatorname{deg}(\f_{0})\cdot(\sum_{\i=1}^{\n-1}\operatorname{deg} (\f_{\i}))$. See \cite[1.3]{KELLER19991}. The Hochschild homology of $\mathcal{A}$ is defined to be homology of the total complex.

\par

\begin{rem}\label{rem2.6}
Following paper \cite{KELLER19991}, we can associate a mixed complex whose underline complex is defined above for a small $\dg$ category. A mixed complex can be regarded as complex with degree $-1$ operator $\mathsf{B}$ such that $\mathsf{B}^{2}=0$, $\d\mathsf{B}+\mathsf{B}\d=0$. It is equivalent to modules of a $\dg$ algebra $\Lambda$, $\Lambda:= \k[\varepsilon]/(\varepsilon^{2})$, degree $\varepsilon$ is $-1$ and $\d\varepsilon =0$.
Hence, when we talk about Hochschild complexes, we always refer to the mixed complexes in mixed derived category (the derived category of modules over a $\dg$ algebra $\Lambda$). The Hochschild homology is the homology of the mixed complex for the original differential $\d$.
\end{rem}

 \begin{rem}\ \\
 1. A differential graded algebra can be regarded as a differential graded category with one object and the two definitions of Hochschild homology given above coincide.\\
 2. Some people define the Hochschild homology of a mixed complex to be the opposite degree homology of the mixed complex, which is compatible with the Hochschild homology of an algebra. In this paper, we define the Hochschild homology of a mixed complex as the homology of the complex and we regard the Hochschild homology of an algebra concentrates at non-positive degrees.
\end{rem}

\begin{defn}
 The Hochschild homology can be defined for a localizing pair $\langle \mathcal{B}_{0},\mathcal{B}\rangle$ of differential graded categories. It is defined as $\Cone(\C(\mathcal{B}_{0})\rightarrow \C(\mathcal{B}))$ in the mixed derived category. The cone is taken in the mixed derived category, see Remark \ref{rem2.6}. Here the $\dg$ categories are exact (or strong pre-triangulated) $\dg$ categories and $\mathcal{B}_{0}$ is the exact dg subcategory, see\cite[2.1]{KELLER19991}. For simplicity, the exact dg category means the shift $[\n]$ and the cone exist in degree $0$ levels. It is stronger than being a pre-triangulated category since the latter one requires the existence of shifts and cones in the homotopic level. We say the Verdier quotient $[\mathcal{B}]/[\mathcal{B}_{0}]$ the associated triangulated category of the localizing pair $\langle \mathcal{B}_{0},\mathcal{B}\rangle$.
\end{defn}

\subsection{Hochschild homology of unique enhanced triangulated categories}
\begin{defn}
Let $\mathcal{T}$ be a $\dg$ category. Consider Yoneda embedding, $\Y\colon \mathcal{T}\hookrightarrow \dg-\mod(\mathcal{T})$. Let $\f\colon \X\rightarrow \Z$ be of degree zero and closed morphism, $\X, \Z \in \mathcal{T}$. It would happen that the object $\Y^{\A1}[\m]$($\A1\in \mathcal{T}$) and $\Cone(\Y^{\X}\rightarrow \Y^{\Z})$ do not come from the image of $\Y$, i.e they are not represented. If they are represented in the homotopic category, we call $\mathcal{T}$ pre-triangulated $\dg$ category. If they are represented in $\dg-\mod(\mathcal{T})$ and $\mathcal{T}$ has zero object, we call $\mathcal{T}$ strong pre-triangulated (or exact $\dg$ category). The typical example of a strong pre-triangulated category is $\C_{\dg}(\mathcal{A})$, the dg category of complexes of $\mathcal{A}$ where $\mathcal{A}$ is an abelian category.
\end{defn}

\begin{lem}\label{exactdglift}
Let $\mathcal{A}$ be a $\dg$ category. There is a universal exact $\dg$ envelop $\Ext(\mathcal{A})$ of $\mathcal{A}$. That is , there is an embedding $\mathcal{A}\hookrightarrow \Ext(\mathcal{A})$, and for any exact $\dg$ category $\mathcal{B}$ and dg functor, $\mathcal{A}\rightarrow \mathcal{B}$, there is a unique extended $\dg$ functor from $\Ext(\mathcal{A})$ to $\mathcal{B}$.
\end{lem}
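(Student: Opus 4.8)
The plan is to realize $\Ext(\mathcal A)$ as the smallest \emph{closed} full $\dg$ subcategory of $\dg\text{-}\mod(\mathcal A)$ containing the Yoneda image. Concretely, let $\Y\colon\mathcal A\hookrightarrow\dg\text{-}\mod(\mathcal A)$ be the Yoneda embedding; since $\dg\text{-}\mod(\mathcal A)$ is a strong pre-triangulated $\dg$ category, take $\Ext(\mathcal A)$ to be the full $\dg$ subcategory whose objects are those obtained from the representables $\Y^{A}$ ($A\in\mathcal A$) and the zero module by finitely many iterated shifts $[\n]$ and cones $\Cone(\f)$ of closed degree-$0$ morphisms. One checks at once that $\Ext(\mathcal A)$ is stable under these operations inside $\dg\text{-}\mod(\mathcal A)$, hence is itself an exact $\dg$ category with zero object, and that $\Y$ restricts to a fully faithful $\dg$ embedding $\mathcal A\hookrightarrow\Ext(\mathcal A)$ (full faithfulness is the $\dg$ Yoneda lemma, which identifies $\dg\text{-}\mod(\mathcal A)(\Y^{A},\Y^{B})$ with $\mathcal A(A,B)$ as complexes). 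For the extension step it is convenient to note that this construction coincides with the $\dg$ category of one-sided twisted complexes over $\mathcal A$ (Bondal--Kapranov): objects are finite formal sums $\bigoplus_{\i}A_{\i}[\r_{\i}]$ equipped with a strictly upper-triangular degree-$1$ matrix $\q=(\q_{\i\j})$ satisfying $\d\q+\q^{2}=0$, and morphisms are matrices, with composition and differential given by explicit formulas in $\d$, $+$, $\circ$ and the shift.

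For the universal property, fix an exact $\dg$ category $\mathcal B$ and a $\dg$ functor $\F\colon\mathcal A\to\mathcal B$. I define $\tilde\F\colon\Ext(\mathcal A)\to\mathcal B$ by induction on the cone/shift depth (equivalently, on the size) of an object. Any $\dg$ functor sends closed degree-$0$ morphisms to closed degree-$0$ morphisms, so if $\X=\Cone(\f\colon\Y\to\Z)$ with $\tilde\F$ already defined on $\Y,\Z$, then $\tilde\F(\f)\colon\tilde\F(\Y)\to\tilde\F(\Z)$ is closed of degree $0$ in $\mathcal B$ and, $\mathcal B$ being exact, $\Cone(\tilde\F(\f))$ exists in $\mathcal B$; set $\tilde\F(\X):=\Cone(\tilde\F(\f))$ and $\tilde\F(\Y[\n]):=\tilde\F(\Y)[\n]$. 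On morphisms, use the twisted-complex model: a morphism of $\Ext(\mathcal A)$ is a matrix with entries morphisms of $\mathcal A$ (up to shift), and $\tilde\F$ acts entrywise through $\F$. Because the twisted-complex differential, composition, and Maurer--Cartan constraint are governed by the \emph{same} universal polynomial expressions in $\mathcal A$ and in $\mathcal B$, and $\F$ respects $\d$, $+$, $\circ$ and shifts, the entrywise rule defines a genuine $\dg$ functor extending $\F$.

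For uniqueness, recall that in an exact $\dg$ category the shift $\Y\mapsto\Y[\n]$ and the cone $\f\mapsto\Cone(\f)$ are representable, i.e.\ the corresponding $\dg$ modules over the category are isomorphic to representable ones, so — by full faithfulness of the Yoneda embedding — the cone object and shift object are determined up to a unique closed degree-$0$ isomorphism by $\f$, resp.\ by $\Y$. Hence any $\dg$ functor $\G\colon\Ext(\mathcal A)\to\mathcal B$ with $\G|_{\mathcal A}=\F$ must carry every shift and every cone appearing in the construction of an object to the corresponding shift and cone in $\mathcal B$, compatibly with all structure morphisms; an induction on depth then produces a canonical natural isomorphism $\G\cong\tilde\F$ (and equality $\G=\tilde\F$ once one fixes representatives for cones and shifts in $\mathcal B$). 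This establishes the universal property and pins down $\Ext(\mathcal A)$ up to unique equivalence.

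I expect the main obstacle to be the verification buried in the second step: that the entrywise-defined $\tilde\F$ really is a $\dg$ functor — strictly compatible with matrix composition, with the twisted differential, and with the Maurer--Cartan equation — and, relatedly, that it does not depend on the chosen presentation of an object as an iterated cone/shift. Passing to the twisted-complex model is precisely what tames this: there all the operations are given by fixed explicit formulas that $\F$ transports automatically, so the checks become essentially formal bookkeeping. The one genuine preliminary that must be dispatched carefully is the identification of the ``closure inside $\dg\text{-}\mod(\mathcal A)$'' description of $\Ext(\mathcal A)$ with the twisted-complex description, which amounts to showing that iterated cones and shifts of representables are exactly the twisted complexes.
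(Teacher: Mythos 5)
Your construction coincides with the paper's: both take $\Ext(\mathcal A)$ to be the full $\dg$ subcategory of $\dg\text{-}\mod(\mathcal A)$ generated by the Yoneda image under iterated shifts and cones. The paper's proof, however, stops there — it simply asserts that the universal property holds and notes that $[\Y]$ is an equivalence when $\mathcal A$ is pre-triangulated. You go considerably further: by identifying $\Ext(\mathcal A)$ with the Bondal--Kapranov category of one-sided twisted complexes, you turn the extension of a $\dg$ functor $\F$ into an entrywise, formula-driven assignment, which sidesteps the genuine bookkeeping problem that a given object admits many presentations as an iterated cone. This is the missing verification that the paper leaves to the reader, and passing to twisted complexes is the standard way to make it tractable. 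You also flag, correctly, the one subtlety the lemma's wording glosses over: since cones and shifts in $\mathcal B$ are determined only up to a unique closed degree-$0$ isomorphism, the extension $\tilde\F$ is unique up to canonical natural isomorphism rather than on the nose, unless one equips $\mathcal B$ with chosen functorial cones and shifts. So your proposal is the paper's argument carried through honestly; it buys an actual proof of the universal property, whereas the paper buys brevity.
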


\begin{proof}
Consider Yoneda embedding $\Y: \mathcal{A}\rightarrow \dg-\mod(\mathcal{A})$. Let $\Ext(\mathcal{A})$ be the full subcategory of $\dg-\mod(\mathcal{A})$ with objects being generated by shifting and cones of $\Y([\mathcal{A}])$ in the homotopic category. $\Ext(\mathcal{A})$ satisfies the universal property. Clearly, we have equivalent $\Y\colon [\mathcal{A}]\rightarrow [\Ext(\mathcal{A})]$ if $\mathcal{A}$ is pre-triangulated.
\end{proof}

\begin{defn}
 The triangulated category $\mathcal{T}$ has a unique enhancement if it has pre-triangulated enhancement. Moreover, for any two enhancements $\mathcal{T}_{1}$ and $\mathcal{T}_{2}$, there exists a quasi-functor or
  a morphism in $\operatorname{Ho(dg-cat)}$ (according to Bertrand To\"en \cite{article}, they are equivalent) which induces an equivalence of homotopic categories .
\end{defn}

\begin{lem}\label{enhancedperf}
  Let $\X$ be a projective smooth variety and $\Z$ is a closed scheme of $\X$. The triangulated categories $\Perf(\X)$ and $\Perf_{\Z}(\X)$ are unique enhanced triangulated categories \cite{2010JAMS...23..853L}\cite[Corollary 2]{Antieau2018OnTU}. Here, $\Perf_{\Z}(\X)$ is the full subcategory of $\Perf(\X)$ whose objects support in $\Z$.
\end{lem}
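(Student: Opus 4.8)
The plan is to treat the two categories separately, since they rest on different inputs, and in both cases the work has essentially been done in the literature cited in the statement.

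First I would dispose of $\Perf(\X)$. Since $\X$ is projective smooth it is in particular quasi-projective and $\Perf(\X)\cong\DbX$; Lunts and Orlov proved that for a quasi-projective scheme each of $\D(\Qch\,\X)$, $\DbX$ and $\Perf(\X)$ carries a (strongly) unique enhancement \cite{2010JAMS...23..853L}, which is the first assertion verbatim. Unwinding their argument: given two pre-triangulated enhancements, one lifts a set of compact generators together with the convolutions that assemble the whole category out of them to produce a quasi-functor between the two enhancements, and the existence of an ample line bundle is used to show this quasi-functor is a quasi-equivalence.

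For $\Perf_{\Z}(\X)$ I would not attempt to argue that a thick subcategory of a uniquely enhanced category is automatically uniquely enhanced — this fails in general — but instead exploit the open--closed geometry. The approach is to place $\Perf_{\Z}(\X)$ in the Thomason--Neeman localization sequence $\Perf_{\Z}(\X)\hookrightarrow\Perf(\X)\to\Perf(\U)$, where $\U=\X\setminus\Z$ and the right-hand functor has dense image (an equivalence after idempotent completion). Both $\Perf(\X)$ and $\Perf(\U)$ are perfect-complex categories of smooth quasi-projective schemes, hence uniquely enhanced by the first step; lifting the sequence to the $\dg$ level, the enhancement of the quotient is the $\dg$ Verdier quotient by the enhancement of the kernel, and one wants to conclude that $\Perf_{\Z}(\X)$ itself is pinned down up to quasi-equivalence. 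This is exactly the content of Antieau's analysis of perfect complexes with support \cite[Corollary 2]{Antieau2018OnTU}: one passes to the compactly generated category of complexes with quasi-coherent cohomology supported on $\Z$, uses that the kernel of a localization between compactly generated triangulated categories admits a functorially unique enhancement, and recovers $\Perf_{\Z}(\X)$ as its subcategory of compact objects.

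The hard part is precisely this last passage for the subcategory: one must control the ambiguity in how the localization functor $\Perf(\X)\to\Perf(\U)$ is realized on enhancements, so that its kernel is determined up to quasi-equivalence rather than merely up to equivalence of underlying triangulated categories. The compactly generated reformulation in \cite{Antieau2018OnTU} is built for exactly this, so once that input is granted the lemma follows; absent it, a self-contained argument would require a descent/gluing statement for enhancements along the decomposition of $\X$ into the open $\U$ and the closed $\Z$.
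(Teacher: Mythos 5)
Your approach would likely work, but it is considerably more elaborate than the paper's, and it appears to misread what \cite[Corollary~2]{Antieau2018OnTU} actually supplies. The paper's proof is a two-line observation: because $\X$ is smooth and projective one has $\Perf(\X)\cong\D^{\b}(\mathsf{coh}(\X))$ and, crucially, $\Perf_{\Z}(\X)\cong\D^{\b}(\mathsf{coh}_{\Z}(\X))$, where $\mathsf{coh}_{\Z}(\X)$ is the abelian category of coherent sheaves supported on $\Z$ (this uses that $\mathsf{coh}_{\Z}(\X)$ is a Serre subcategory and that every complex with supported cohomology is quasi-isomorphic to one built from supported sheaves). Antieau's Corollary~2 is a statement about bounded derived categories of abelian categories having unique enhancements, so it applies to both $\Perf(\X)$ and $\Perf_{\Z}(\X)$ at once, with no appeal to Lunts--Orlov for the first and no localization-sequence argument for the second.

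You correctly sense the danger of treating a thick subcategory of a uniquely enhanced category as automatically uniquely enhanced, and your Thomason--Neeman route --- lifting $\Perf_{\Z}(\X)\hookrightarrow\Perf(\X)\to\Perf(\U)$ to the $\dg$ level and identifying $\Perf_{\Z}(\X)$ as the compact objects of the kernel of a localization of compactly generated categories --- is a sound strategy in principle. But, as you yourself flag, it requires controlling the enhancement of the localization functor, and you defer that entirely to the cited corollary as though it were designed for that purpose; it is not, and your description of it (``Antieau's analysis of perfect complexes with support'') does not match what the paper invokes it for. The key step you miss is the purely abelian-categorical identification $\Perf_{\Z}(\X)\cong\D^{\b}(\mathsf{coh}_{\Z}(\X))$, which removes the need for any gluing or localization-control argument and lets the uniqueness-for-$\D^{\b}$-of-an-abelian-category theorem apply directly. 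Once you make that observation, your entire second paragraph collapses to a one-sentence citation.
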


\begin{proof}
  The statement is true for more general $\X$, which is also well known for experts. For the case of projective smooth variety $\X$, there is an easier way to see this. Since $\Perf(\X)\cong \D^{\b}(\mathsf{coh}(\X))$, and $\Perf_{\Z}(\X)\cong \D^{\b}(\mathsf{coh}_{\Z}(\X))$, according to \cite[Corollary 2]{Antieau2018OnTU}, they are unique enhanced triangulated categories.
\end{proof}

\begin{defn}
 Let $\T$ be a unique enhanced triangulated category. We define Hochschild homology of $\T$ as Hochschild homology of its pre-triangulated $\dg$ enhancement. Note that two different pre-triangulated $\dg$ enhancement are isomorphic in $\operatorname{Ho(dg-cat)}$, which implies there is a chain of quasi-equivalences of $\dg$ categories connected these two enhancement. Thus, the Hochschild homology of $\T$ is independent of choice of pre-triangulated enhancement because Hochschild homology is a derived Morita invariant.
\end{defn}

 We calculate the Hochschild homology of $\Perf(\X)$ and $\Perf_{\Z}(\X)$ via this definition. Before doing these, we need some lemmas.

 \begin{lem}\label{Homologydgpair}
 Let $\langle\mathcal{B}_{0},\mathcal{B}\rangle$ be a localizing pair, and denote its Hochschild complex by $\C_{\bullet}$. Suppose $\mathcal{T}:= [\mathcal{B}/\mathcal{B}_{0}]\cong [\mathcal{B}]/[\mathcal{B}_{0}]$ is a unique enhanced triangulated categories, then $\C(\mathcal{T})\cong \C_{\bullet}$ in mixed derived category.
\end{lem}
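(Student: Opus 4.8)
\textbf{Proof proposal for Lemma \ref{Homologydgpair}.}

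The plan is to reduce everything to the statement that Hochschild homology, packaged as a mixed complex, is an invariant of the localizing pair only through the associated triangulated category together with its canonical enhancement. First I would recall from \cite{KELLER19991} that for a localizing pair $\langle \mathcal{B}_0, \mathcal{B}\rangle$ the mixed complex $\C_\bullet = \Cone(\C(\mathcal{B}_0) \to \C(\mathcal{B}))$ fits into the basic localization theorem: there is a natural quasi-isomorphism of mixed complexes $\C_\bullet \simeq \C(\mathcal{B}/\mathcal{B}_0)$, where $\mathcal{B}/\mathcal{B}_0$ denotes the $\dg$ quotient (in the sense of Drinfeld/Keller). In other words, the cone computing the relative Hochschild homology already agrees with the Hochschild complex of a $\dg$ enhancement of the Verdier quotient $[\mathcal{B}]/[\mathcal{B}_0]$. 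This is essentially Keller's invariance theorem for localization pairs and I would cite it rather than reprove it.

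Next I would observe that $\mathcal{B}/\mathcal{B}_0$ is a pre-triangulated (indeed one can pass to its exact envelope $\Ext(\mathcal{B}/\mathcal{B}_0)$ via Lemma \ref{exactdglift}, which changes neither the homotopy category nor the Hochschild mixed complex, since $\mathcal{A} \hookrightarrow \Ext(\mathcal{A})$ is a Morita-type equivalence) $\dg$ category whose homotopy category is $[\mathcal{B}]/[\mathcal{B}_0] \cong \mathcal{T}$. Hence $\mathcal{B}/\mathcal{B}_0$ is a $\dg$ enhancement of $\mathcal{T}$. By hypothesis $\mathcal{T}$ is a uniquely enhanced triangulated category, so any two $\dg$ enhancements of $\mathcal{T}$ — in particular $\mathcal{B}/\mathcal{B}_0$ and whatever enhancement is used in the definition of $\C(\mathcal{T})$ — are connected by a chain of quasi-equivalences, or equivalently are isomorphic in $\operatorname{Ho(dg\text{-}cat)}$. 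Since the assignment of the Hochschild mixed complex is a derived Morita invariant (it factors through $\operatorname{Ho(dg\text{-}cat)}$, by Keller), these enhancements have isomorphic mixed complexes in the mixed derived category. Combining the two steps gives $\C_\bullet \simeq \C(\mathcal{B}/\mathcal{B}_0) \simeq \C(\mathcal{T})$ in the mixed derived category, which is the claim.

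The main obstacle, and the point that deserves care, is the first step: making precise that the cone defining the relative Hochschild complex of the localizing pair genuinely computes the Hochschild complex of the $\dg$ quotient, and that this holds at the level of mixed complexes (not merely on homology). This is exactly the content of Keller's localization theorem for Hochschild and cyclic homology, but one must check that the exactness hypotheses on $\mathcal{B}_0 \subset \mathcal{B}$ in our setting — that these are exact (strong pre-triangulated) $\dg$ categories with $\mathcal{B}_0$ an exact $\dg$ subcategory — match the hypotheses under which that theorem is stated. A secondary subtlety is verifying that $\Ext(\mathcal{A}) \hookleftarrow \mathcal{A}$ induces an isomorphism on Hochschild mixed complexes; this follows because the inclusion is a Morita equivalence (the triangulated hulls agree), and Hochschild homology is insensitive to passing to the triangulated/exact hull, but it is worth stating explicitly so that the reduction from $\mathcal{B}/\mathcal{B}_0$ to a genuinely exact enhancement is legitimate.
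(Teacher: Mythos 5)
Your proposal is correct and follows essentially the same route as the paper: invoke Keller's localization theorem (cited in the paper as \cite[Thm 4(c)]{KELLER19991}) to identify the relative mixed complex $\C_{\bullet}$ with $\C$ of the Drinfeld dg quotient (or its exact envelope $\mathcal{D}$), note that this quotient is a pre-triangulated enhancement of $\mathcal{T}$, and then use unique enhancement plus Morita invariance of the Hochschild mixed complex to conclude. The paper compresses your final two steps into ``by definition,'' but the underlying reasoning is the same.
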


 \begin{proof}

Since $\mathcal{B}$ is a pre-triangulated category, the Drinfeld dg quotient $\mathcal{B}/\mathcal{B}_{0}$(see Drinfeld \cite{D}) is a pre-triangulated dg category \cite[Lemma 1.5]{2010JAMS...23..853L}.
Consider the exact $\dg$ envelope $\mathcal{D}$, we have a diagram
$$\mathcal{B}_{0}\rightarrow \mathcal{B}\rightarrow \mathcal{B}/\mathcal{B}_{0}\hookrightarrow \mathcal{D}$$
which induces an exact sequence of triangulated categories:
$$[\mathcal{B}_{0}]\rightarrow [\mathcal{B}]\rightarrow [\mathcal{B}/\mathcal{B}_{0}]\cong [\mathcal{D}].$$
According to Bernhard Keller\cite[Thm 4(c) ]{KELLER19991}, there is a triangle in the mixed derived category,
$$\C(\mathcal{B}_{0})\rightarrow \C(\mathcal{B})\rightarrow \C(\mathcal{D})\rightarrow \C(\mathcal{B}_{0}[1]).$$
Then, by definition, we have an isomorphism of Hochschild complexes $C_{\bullet}\cong C(\mathcal{T})$ in the mixed derived category.
 \end{proof}

\begin{prop}\label{dgpairsX}
   (\cite[Section 5.7]{1998math......2007K}) Let $\X$ be a smooth projective variety, and $\Z$ is a closed subset of $\X$. Consider the natural exact $\dg$ pairs $\Per(\X)$: They are the category of perfect complexes and its full subcategory of acyclic perfect complexes whose associated triangulated categories is $\Perf(\X)$. Similar notation $\Per_{\Z}(\X)$ for the natural $\dg$ pair of support case. The Hochschild complexes of these $\dg$ pairs are isomorphic to $\R\Gamma(\X,\M(\mathcal{O}_{\X}))$ and $\R\Gamma_{\Z}(\X,\M(\mathcal{O}_{\X}))$ respectively, where $\M(\mathcal{O}_{\X})$ is the sheaf of bar complex.
\end{prop}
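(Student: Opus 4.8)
The statement is essentially \cite[Section 5]{1998math......2007K}, so the plan is to recall the shape of Keller's argument for $\Per(\X)$ and then deduce the support version by localization. First I would treat the affine case $\X=\operatorname{Spec} A$: the associated triangulated category of the $\dg$ pair $\Per(\X)$ is $\Perf(A)$, and the one-object $\dg$ category $A$ sits inside $\Per(A)$ as a Morita morphism, so by derived Morita invariance of the Hochschild mixed complex \cite{KELLER19991} one gets $\C(\Per(\X))\cong \C(A)$ in the mixed derived category. On the other side, $\M(\mathcal{O}_\X)$ restricted to an affine is a complex of quasi-coherent sheaves, so its hypercohomology collapses to global sections $\Gamma(\X,\M(\mathcal{O}_\X))=\C(A)$. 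Thus the two sides agree, naturally in $A$.

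Next I would globalize by Zariski descent. Both $\X\mapsto \C(\Per(\X))$ and $\X\mapsto \R\Gamma(\X,\M(\mathcal{O}_\X))$, regarded as functors on quasi-compact separated open subschemes, satisfy Mayer--Vietoris: for $\X=U\cup V$ there are triangles
\[
\C(\Per(\X))\to \C(\Per(U))\oplus \C(\Per(V))\to \C(\Per(U\cap V))\to \C(\Per(\X))[1]
\]
and likewise for $\R\Gamma(-,\M(\mathcal{O}_\X))$. For the sheaf side this is the usual Mayer--Vietoris sequence for cohomology of a complex of sheaves; for the $\dg$ side it follows from the Thomason--Trobaugh localization theorem for perfect complexes combined with Keller's localization theorem \cite[Thm 4(c)]{KELLER19991}. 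Since $\X$ is projective it has a finite affine open cover with affine pairwise intersections, so a finite induction over such a cover reduces the identification $\C(\Per(\X))\cong\R\Gamma(\X,\M(\mathcal{O}_\X))$ to the affine case, and the isomorphism can be arranged to be natural in open immersions.

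For the support case I would use that the Thomason--Trobaugh localization sequence $\Perf_\Z(\X)\to\Perf(\X)\to\Perf(\X\setminus\Z)$ (up to factors, which are invisible to Hochschild homology) is realized by the $\dg$ pairs $\Per_\Z(\X)\to\Per(\X)\to\Per(\X\setminus\Z)$, so by \cite[Thm 4(c)]{KELLER19991} there is a triangle
\[
\C(\Per_\Z(\X))\to \C(\Per(\X))\to \C(\Per(\X\setminus\Z))\to \C(\Per_\Z(\X))[1]
\]
in the mixed derived category. Matching it with the local cohomology triangle
\[
\R\Gamma_\Z(\X,\M(\mathcal{O}_\X))\to \R\Gamma(\X,\M(\mathcal{O}_\X))\to \R\Gamma(\X\setminus\Z,\M(\mathcal{O}_\X))\to \R\Gamma_\Z(\X,\M(\mathcal{O}_\X))[1]
\]
and using the previous step to identify the middle and right terms compatibly with restriction, I would conclude by comparing cones that $\C(\Per_\Z(\X))\cong \R\Gamma_\Z(\X,\M(\mathcal{O}_\X))$.

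The hard part is the Zariski-descent property of $\X\mapsto\C(\Per(\X))$ in the second step: this is where the Thomason--Trobaugh machinery (extending perfect complexes across opens, the localization theorem) is genuinely needed, and it is the technical core of Keller's paper; granting it, the first and third steps are formal. A secondary point to watch is that all comparison quasi-isomorphisms be natural with respect to the restriction functors $\Per(\X)\to\Per(\X\setminus\Z)$ and the sheaf restrictions, since the third step identifies cones and therefore cannot proceed with merely abstract, non-natural isomorphisms.
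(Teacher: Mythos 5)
The paper does not give its own proof of this proposition: it is stated as a citation to Keller's paper on cyclic homology of ringed spaces and schemes, so there is no in-paper argument to compare against. Your sketch is a reasonable reconstruction of the argument behind that citation (affine case via Morita invariance, Zariski descent via Thomason--Trobaugh plus Keller's localization theorem for the dg side and Mayer--Vietoris for the sheaf side, then the support version by comparing localization triangles), and the structure is sound. Two points deserve more care than you give them. First, the claim that ``hypercohomology collapses to global sections'' on an affine is not immediate, because the terms of $\M(\mathcal{O}_\X)$ are sheafifications of presheaves $U\mapsto\mathcal{O}_\X(U)^{\otimes(n+1)}$ over $\k$ and are not themselves quasi-coherent; what one actually uses is that the \emph{cohomology sheaves} of $\M(\mathcal{O}_\X)$ are quasi-coherent (a Weibel--Geller--type localization statement), so that the hypercohomology spectral sequence collapses on affines and recovers $\C(A)$. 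Second, you correctly flag that the cone comparison in the support step needs the affine/Zariski-descent isomorphism to be natural with respect to restriction along open immersions; this is precisely why Keller builds the comparison via a functorial flabby or \v{C}ech-type resolution rather than by an a posteriori Mayer--Vietoris induction, and simply asserting that ``the isomorphism can be arranged to be natural'' is the one place where your outline is genuinely leaning on the reference rather than reproving it.
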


\begin{thm}\ \label{geometryformula}
Let $\X$ be a projective smooth variety, and $\Z$ is a closed subset. Then

   $$\HH_{\i}(\Perf_{\Z}(\X))\cong \bigoplus_{\p-\q=\i}\H_{\Z}^{\p}(\X,\Omega_{\X}^{\q}).$$

\end{thm}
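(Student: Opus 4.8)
\textbf{Proof plan for Theorem \ref{geometryformula}.}
The plan is to reduce the computation of $\HH_\bullet(\Perf_\Z(\X))$ to a sheaf-cohomology calculation by combining the identifications already assembled in the excerpt. First I would invoke Lemma \ref{enhancedperf} together with the definition of Hochschild homology of a unique enhanced triangulated category: since $\Perf_\Z(\X)$ is uniquely enhanced, its Hochschild homology is computed by any pre-triangulated $\dg$ enhancement, and by Lemma \ref{Homologydgpair} it agrees with the Hochschild complex of the localizing pair $\Per_\Z(\X)$ whose associated triangulated category is $\Perf_\Z(\X)$. Then Proposition \ref{dgpairsX} identifies this Hochschild complex, in the mixed derived category, with $\R\Gamma_\Z(\X,\M(\mathcal{O}_\X))$, where $\M(\mathcal{O}_\X)$ is the sheafified bar complex. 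So the task becomes: compute $\H_\bullet\big(\R\Gamma_\Z(\X,\M(\mathcal{O}_\X))\big)$.

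Next I would bring in the Hochschild--Kostant--Rosenberg philosophy at the level of sheaves. Because $\X$ is smooth, the sheaf $\M(\mathcal{O}_\X)$ (the Hochschild chain complex sheaf, i.e. the structure sheaf of the derived self-intersection of the diagonal, or equivalently $\Delta^\ast\Delta_\ast\mathcal{O}_\X$) is quasi-isomorphic to $\bigoplus_{\q\geq 0}\Omega_\X^\q[\q]$ by the sheaf-theoretic HKR isomorphism; this is exactly the local statement behind Example \ref{eg 1.1}, globalized. Substituting this quasi-isomorphism into $\R\Gamma_\Z(\X,-)$ and using that local cohomology with supports commutes with the finite direct sum gives
\[
\R\Gamma_\Z(\X,\M(\mathcal{O}_\X))\;\simeq\;\bigoplus_{\q\geq 0}\R\Gamma_\Z(\X,\Omega_\X^\q)[\q].
\]
Taking $\i$-th homology (with the convention fixed in Remark \ref{rem2.6}, so that a shift by $[\q]$ contributes in homological degree $+\q$ against a cohomological degree $\p$ with a sign) yields
\[
\HH_\i(\Perf_\Z(\X))\;\cong\;\bigoplus_{\p-\q=\i}\H_\Z^\p(\X,\Omega_\X^\q),
\]
which is the claimed formula. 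I would be careful to state explicitly that $\H_\Z^\p$ denotes hypercohomology with supports in $\Z$ and that the indexing convention matches the paper's convention that ordinary Hochschild homology sits in non-positive degrees.

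The main obstacle I anticipate is justifying the sheaf-level HKR quasi-isomorphism for $\M(\mathcal{O}_\X)$ in a form compatible with the mixed-complex (cyclic) structure and then checking that $\R\Gamma_\Z$ may be applied term by term. Two points need care: (i) the bar-complex sheaf $\M(\mathcal{O}_\X)$ must be resolved by objects for which $\R\Gamma_\Z$ is computed correctly (flasque or injective resolutions), so that the hypercohomology spectral sequence degenerates appropriately or at least the homology is read off as a direct sum — here smoothness is essential, since otherwise there would be higher $\Tor$ corrections to the diagonal; (ii) one should confirm that the identification in Proposition \ref{dgpairsX} is an isomorphism of \emph{complexes} (or at least in the mixed derived category), so that passing to $\R\Gamma_\Z$ is legitimate rather than only an isomorphism of homology groups. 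Once these compatibilities are in place the rest is bookkeeping with indices and signs; the conceptual content is entirely carried by the uniqueness of enhancements (Lemma \ref{enhancedperf}), the localizing-pair computation (Lemma \ref{Homologydgpair}, Proposition \ref{dgpairsX}), and the global HKR theorem.
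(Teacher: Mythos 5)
Your proposal is correct and follows essentially the same route as the paper: reduce to $\R\Gamma_\Z(\X,\M(\mathcal{O}_\X))$ via Lemma \ref{Homologydgpair} and Proposition \ref{dgpairsX}, then apply the sheaf-level HKR quasi-isomorphism $\M(\mathcal{O}_\X)\simeq\bigoplus_\q\Omega_\X^\q[\q]$ and read off the answer. The only cosmetic difference is that the paper writes out the antisymmetrization chain map $\f_1\otimes\cdots\otimes\f_\n\mapsto\frac{1}{\n!}\f_1\,\d\f_2\wedge\cdots\wedge\d\f_\n$ realizing HKR explicitly, whereas you invoke HKR as a black box and flag the compatibility checks instead.
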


\begin{proof}
  According to Lemma \ref{Homologydgpair} and Proposition \ref{dgpairsX}, it suffices to compute $\R\Gamma_{\Z}(\X,\M(\mathcal{O}_{\X}))$. If $\X$ is smooth, we have a morphism of complexes which is a $\k$-quasi-isomorphism \\
 \centerline{\xymatrix{\cdots\ar[r]&\mathcal{O}_{\X}\otimes \mathcal{O}_{\X}\otimes \mathcal{O}_{\X}\ar[r]^{\mathsf{bar}}\ar[d]^{\d}&\mathcal{O}_{\X}\otimes \mathcal{O}_{\X}\ar[r]^{\mathsf{bar}}\ar[d]^{\d}&\mathcal{O}_{\X}\ar[r]^{0}\ar[d]^{\d}&0\ar[d]^{\d}\ar[r]&0\\
\cdots\ar[r]&\Omega^{2}_{\X}\ar[r]^{0}&\Omega^{1}_{\X}\ar[r]^{0}&\Omega^{0}_{\X}\ar[r]^{0}&0\ar[r]&0
}}
Here $\M(\mathcal{O}_{\X})$ is $\k$-quasi-isomorphic to sheaf version of the bar complex, $\d(\f_{1}\otimes \f_{2}\otimes \cdots \otimes\f_{\n}):=\frac{1}{\n!}\f_{1}(\d\f_{2}\bigwedge \cdots \bigwedge\d\f_{\n})$.
Note that it is firstly defined via presheaves, and then induces morphism of sheaves.
Locally it is a $\k$-quasi-isomorphism by usual HKR theorem for regular algebras. Hence the differential induces a $\k$-quasi-isomorphism of complexes. Then formula follow easily.
\end{proof}

\begin{cor}\label{-nHoch}
 Let $\X$ be a projective smooth variety of dimension $\n$, and $\Z$ is a proper closed subscheme of $\X$. Then, $\HH_{-\n}(\Perf_{\Z}(\X))=0$.
\end{cor}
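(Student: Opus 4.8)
The plan is to deduce Corollary \ref{-nHoch} directly from the geometric formula in Theorem \ref{geometryformula}. That theorem gives
\[
\HH_{-\n}(\Perf_{\Z}(\X)) \;\cong\; \bigoplus_{\p - \q = -\n} \H_{\Z}^{\p}(\X, \Omega_{\X}^{\q}),
\]
so it suffices to show that every summand on the right vanishes when $\Z$ is a proper closed subscheme. The index condition $\p - \q = -\n$ together with $0 \le \p$ and $0 \le \q \le \n$ forces $\p = 0$ and $\q = \n$; hence the only potentially nonzero summand is $\H_{\Z}^{0}(\X, \omega_{\X})$, the group of sections of $\omega_{\X}$ with support in $\Z$.

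First I would recall that local cohomology with support in $\Z$ fits into the exact sequence
\[
0 \longrightarrow \H_{\Z}^{0}(\X, \omega_{\X}) \longrightarrow \H^{0}(\X, \omega_{\X}) \longrightarrow \H^{0}(\X \setminus \Z, \omega_{\X}),
\]
so $\H_{\Z}^{0}(\X, \omega_{\X})$ is precisely the set of global sections of $\omega_{\X}$ that vanish on the open set $\U = \X \setminus \Z$. Since $\X$ is smooth and connected (a variety) and $\Z$ is a proper closed subscheme, $\U$ is a nonempty open subset, and $\omega_{\X}$ is a line bundle, hence torsion-free as an $\mathcal{O}_{\X}$-module. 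A section of a torsion-free sheaf on an integral scheme that restricts to zero on a nonempty open set is zero. Therefore $\H_{\Z}^{0}(\X, \omega_{\X}) = 0$, and consequently $\HH_{-\n}(\Perf_{\Z}(\X)) = 0$.

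I do not expect any serious obstacle here: the content is entirely in Theorem \ref{geometryformula}, and the corollary is a bookkeeping argument about which Hodge–type pieces can appear in the extreme degree $-\n$, followed by the elementary observation that $\omega_{\X}$ has no nonzero sections supported on a proper closed subset. The one point worth stating carefully is that $\X$ is assumed smooth \emph{projective} and in particular irreducible, so that ``torsion-free line bundle on an integral scheme'' applies and the restriction map $\H^{0}(\X, \omega_{\X}) \to \H^{0}(\U, \omega_{\X})$ is injective; without irreducibility one would need $\Z$ to meet every component, which is exactly the hypothesis that $\Z$ is a \emph{proper} closed subscheme of the (irreducible) $\X$.
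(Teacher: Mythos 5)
Your proposal is correct and follows essentially the same route as the paper: apply Theorem \ref{geometryformula}, observe that the index constraint $\p - \q = -\n$ with $0\le\p$, $0\le\q\le\n$ leaves only $\H^0_{\Z}(\X,\Omega^{\n}_{\X})$, and note that this vanishes for a proper closed $\Z$. The paper states the final vanishing without comment; your justification via torsion-freeness of $\omega_{\X}$ on the integral scheme $\X$ is exactly the implicit argument, and your remark about irreducibility correctly identifies the hypothesis being used.
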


\begin{proof}
 According Theorem \ref{geometryformula},
 $$\HH_{-\n}(\Perf_{\Z}(\X))\cong \oplus_{\p-\q=-\n}\H_{\Z}^{\p}(\X,\Omega^{\q}_{\X})= \H^{0}_{\Z}(\X,\Omega^{\n}_{\X})=0.$$

\end{proof}
\vspace{5mm}

\section{Computation of Hochschild homology via Fourier-Mukai kernels}
In this section, we will first introduce basic notions of  Fourier-Mukai transform in algebraic geometry, the standard reference is \cite[D.\ Huybrechtz's book]{book2}. Then we recompute the geometric formulas of Hochschild homology in Theorem \ref{geometryformula} by using techniques of Fourier-Mukai kenels .

\subsection{Fourier-Mukai Functors}
\begin{defn}
 Let $\X$, $\Y$ be projective smooth varieties, $\E \in \Perf(\X\times\Y)$, the induced functor $$\F\colon \Perf(\X) \rightarrow \Perf(\Y)$$
 given by $\F(\bullet)= \R\p_{2\ast}(\p_{1}^{\ast}\bullet \otimes^{\L}\E)$ is called Fourier-Mukai functor associated to the kernel $\E$.
 \begin{center}
 $$\xymatrix@C50pt{&\X\times \Y\ar[dl]_{\p_{1}}\ar[dr]^{\p_{2}}&\\
            \X&& \Y}$$
 \end{center}

\end{defn}

\begin{eg}
 $\I\d : \X \rightarrow \X$ is given by kernel $\Delta_{\ast}\mathcal{O}_{\X}$.
 This follows from the projection formula
$$ \R\p_{2\ast}(\p_{1}^{\ast}\E\otimes \Delta_{\ast}\mathcal{O}_{\X})
   =\R\p_{2\ast}(\Delta_{\ast}(\Delta^{\ast}\p_{1}^{\ast}\E\otimes \mathcal{O}_{\X}))
   =\E.$$
\begin{center}
 $$\xymatrix@C50pt@R15pt{&\X\ar[d]^{\Delta}\ar[ddl]_{\i\d}\ar[ddr]^{\i\d}&\\
 &\X\times \X\ar[dl]^{\p_{1}}\ar[dr]_{\p_{2}}&\\
 \X&&\X
 }$$
 \end{center}

The readers can check that derived functors $\R\f_{\ast}\colon\Perf{\X} \rightarrow \Perf(\Y)$ and $\L\f^{\ast}\colon \Perf(\Y)\rightarrow \Perf(\X)$ which are induced by morphism $\f\colon \X\rightarrow \Y$ are both Fourier-Mukai functors with some kernels, including Serre functor and the functor tensor with line bundles.
\end{eg}

\begin{eg}
 Let $\F_{1}\in \Perf(\X\times\Y)$ and $\F_{2}\in \Perf(\Y\times \Z)$ be the kernels respect to Furier- Mukai functors $\f_{1}$ and $\f_{2}$. Then the composition $\f_{2}\circ \f_{1}$ is a Fourier-Mukai functor with kernel $$\pi_{\X\Z\ast}(\pi_{\X\Y}^{\ast}(\F_{1})\otimes \pi_{\Y\Z}^{\ast}(\F_{2})).$$
\end{eg}

 It is natural to ask whether the exact functors between derived categories are of Fourier-Mukai functor, it is true for fully faithful functor.

\begin{prop}\label{faithfulfuncor}(D\ .Orlov \cite[Thm 3.4]{orlov1996equivalences})\\
 Let $\F \colon \Perf(\X) \hookrightarrow \Perf(\Y)$ be an exact fully faithful functor which has adjoint (left or right adjoint). Then it is isomorphic to a Fourier-Mukai functor. In particular, the corresponding Kernel is unique up to isomorphism.
 \end{prop}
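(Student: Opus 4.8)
The plan is to follow Orlov's original argument: manufacture a candidate kernel $\E$ out of $\F$ together with a Beilinson-type resolution of the diagonal, check that $\Phi_{\E}\cong\F$, and then deduce uniqueness. As a preliminary remark, $\F$ automatically admits \emph{both} adjoints: the right adjoint $\H$ is given by hypothesis, and since $\X$ and $\Y$ are smooth projective the categories $\Perf(\X)$, $\Perf(\Y)$ carry Serre functors $\S_{\X}$, $\S_{\Y}$, so $\S_{\X}^{-1}\circ\H\circ\S_{\Y}$ is left adjoint to $\F$. Full faithfulness of $\F$ says $\H\circ\F\cong\mathrm{id}_{\Perf(\X)}$, a fact used mainly for the uniqueness clause.

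For the construction, fix a very ample line bundle $\mathcal{O}_{\X}(1)$ and a closed embedding $\X\hookrightarrow\mathbb{P}^{N}$. Restricting the Koszul resolution of the diagonal of $\mathbb{P}^{N}$ yields a finite complex in $\Perf(\X\times\X)$ whose $\i$-th term has the shape $\mathcal{O}_{\X}(-\i)\boxtimes\mathcal{G}_{\i}$, with $\mathcal{G}_{\i}\in\Perf(\X)$ explicit, and whose convolution (iterated cone) represents $\Delta_{\ast}\mathcal{O}_{\X}$; equivalently, every $\M\in\Perf(\X)$ is the convolution of the complex with terms $\R\Gamma(\X,\M(-\i))\otimes\mathcal{G}_{\i}$, the Beilinson spectral sequence of $\M$. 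I would then apply $\F$ to the second box-factor: replace each term $\mathcal{O}_{\X}(-\i)\boxtimes\mathcal{G}_{\i}$ by $\mathcal{O}_{\X}(-\i)\boxtimes\F(\mathcal{G}_{\i})\in\Perf(\X\times\Y)$ and each structural map --- the image of a morphism in $\Perf(\X)$ --- by its image under $\F$, and let $\E\in\Perf(\X\times\Y)$ be the convolution of the resulting complex. (Care is needed here: the restricted Koszul complex a priori computes the \emph{derived} self-intersection of $\X$ inside $\mathbb{P}^{N}$ rather than $\Delta_{\ast}\mathcal{O}_{\X}$, so one must either twist by the Koszul complex of the conormal bundle or instead resolve objects of $\Perf(\X)$ directly by the $\mathcal{O}_{\X}(-\i)$, $\i\gg0$, which form an ample sequence.)

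To verify $\Phi_{\E}\cong\F$ on objects, take $\M\in\Perf(\X)$. The functor $\Phi_{\E}$ is exact, hence commutes with the finite convolution defining $\E$, so $\Phi_{\E}(\M)$ is the convolution of the objects $\R\p_{2\ast}\!\bigl(\p_{1}^{\ast}\M\otimes(\mathcal{O}_{\X}(-\i)\boxtimes\F(\mathcal{G}_{\i}))\bigr)$. By the projection formula and flat base change each of these equals $\R\Gamma(\X,\M(-\i))\otimes\F(\mathcal{G}_{\i})\cong\F\!\bigl(\R\Gamma(\X,\M(-\i))\otimes\mathcal{G}_{\i}\bigr)$, using $\mathbb{C}$-linearity of $\F$. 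Applying exactness of $\F$ once more, the convolution of the $\R\Gamma(\X,\M(-\i))\otimes\mathcal{G}_{\i}$ is sent by $\F$ to $\F(\M)$, so $\Phi_{\E}(\M)\cong\F(\M)$.

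The main obstacle is to make all of this functorial: one must produce an honest natural transformation $\Phi_{\E}\to\F$ rather than a loose family of object-wise isomorphisms, which is exactly where Orlov's careful bookkeeping of the non-canonicity of cones in triangulated categories is needed; one then checks it is an isomorphism on the ample sequence $\{\mathcal{O}_{\X}(-\i)\}_{\i\gg0}$ and invokes the rigidity principle (available because $\F$ is fully faithful with adjoints) that a morphism of exact functors out of $\Perf(\X)$ which is an isomorphism on an ample sequence is an isomorphism. Finally, for uniqueness, if $\Phi_{\E}\cong\Phi_{\E'}$ then evaluating on skyscrapers gives $\E|_{\{\x\}\times\Y}\cong\E'|_{\{\x\}\times\Y}$ for every closed point $\x\in\X$, and since the $\k(\x)$ form a spanning class a standard support-and-rigidity argument on $\X\times\Y$ promotes this to $\E\cong\E'$ in $\Perf(\X\times\Y)$.
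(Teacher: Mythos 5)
The paper does not prove this statement; it is quoted verbatim as Orlov's theorem with a citation, so there is no in-paper argument for you to match. Your sketch of the \emph{existence} part is broadly faithful to Orlov's strategy (ample sequence, convolution, functoriality bookkeeping), and you rightly flag that the naive Koszul restriction does not directly resolve $\Delta_{\ast}\mathcal{O}_{\X}$.

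The \emph{uniqueness} step, however, contains a genuine gap. Knowing $\Phi_{\E}(\k(\x))\cong\Phi_{\E'}(\k(\x))$ for all closed points $\x$, i.e.\ $\mathbb{L}\i_{\x}^{\ast}\E\cong\mathbb{L}\i_{\x}^{\ast}\E'$ fiberwise, does not give $\E\cong\E'$: already a nontrivial line bundle pulled back from $\X$ has the same restriction to every fiber $\{\x\}\times\Y$ as $\mathcal{O}_{\X\times\Y}$. There is no ``standard support-and-rigidity argument'' that upgrades fiberwise isomorphism of kernels to isomorphism in $\Perf(\X\times\Y)$; indeed, even the bare implication $\Phi_{\E}\cong\Phi_{\E'}\Rightarrow\E\cong\E'$ is known to fail without further hypotheses (Canonaco--Stellari exhibit non-unique kernels for non-fully-faithful functors). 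In Orlov's proof the uniqueness is not obtained pointwise at all: one runs the same resolution-of-the-diagonal convolution against an arbitrary competing kernel $\E'$ with $\Phi_{\E'}\cong\F$, and the full faithfulness of $\F$ (equivalently $\H\circ\F\cong\mathrm{id}$) is used at the level of morphism spaces between the convolutional pieces on $\X\times\Y$ to pin down $\E'$ up to isomorphism. To repair your sketch you would need to replace the skyscraper comparison with this global convolution comparison, using fully faithfulness as the actual input rather than leaving it as a side remark.
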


\begin{rem}
 The assumption that the fully faithful functor obtain adjoint functors can be skipped due to deep results of Bondal, Van den Bergh \cite{bondal2002generators}.
 \end{rem}

 \begin{eg}
  The identity functor $\i\d\colon \Perf(\X)\rightarrow \Perf(\X)$ is Fourier-Mukai functor with kernel $\mathcal{O}_{\Delta}$. The kernel corresponding to $\i\d$ is unique up to isomorphism.
 \end{eg}
  The theory of which kinds of functors are Fourier-Mukai functor was generalized to derived categories with support \cite[Theorem 1.1]{canonaco_stellari_2014}. Assume $\Z$ have no components of zero dimension. Clearly the identity functor of $\Perf_{\Z}(\X)$ satisfies assumption in \cite[Theorem 1.1]{canonaco_stellari_2014}. We prove a theorem which will be used to compute Hochschild homology of $\Perf_{\Z}(\X)$.

 \begin{thm}\label{kernelsupport}
  Let $\X$ be a smooth projective variety, and $\Z$ is a closed subscheme with no zero dimensional components. The object $\Delta_{\ast}\R\underline{\Gamma_{\Z}}(\mathcal{O}_{\X})$ is a kernel of $\i\d\colon \Perf_{\Z}(\X)\rightarrow \Perf_{\Z}(\X)$. If $\E \in \D_{\Z\times\Z,\qch}(\X\times\X)$ such that $\Phi_{\E}\cong \i\d$, then  $\E\cong\Delta_{\ast}\R\underline{\Gamma_{\Z}}(\mathcal{O}_{\X})$.
 \end{thm}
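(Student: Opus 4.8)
The plan is to verify the two assertions separately: first that $\Delta_{\ast}\R\underline{\Gamma_{\Z}}(\mathcal{O}_{\X})$ does represent the identity functor of $\Perf_{\Z}(\X)$, and then that it is the \emph{unique} kernel in $\D_{\Z\times\Z,\qch}(\X\times\X)$ doing so. For the first part I would start from the well-known local cohomology triangle $\R\underline{\Gamma_{\Z}}(\mathcal{O}_{\X})\to \mathcal{O}_{\X}\to \R j_{\ast}\mathcal{O}_{\X\setminus\Z}$ (with $j$ the open immersion), push it forward along $\Delta$, and use it as a Fourier--Mukai kernel on an object $\mathcal{F}\in\Perf_{\Z}(\X)$. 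Since $\mathcal{O}_{\Delta}$ gives the identity, projection formula and base change reduce the claim to showing that the kernel $\Delta_{\ast}\R j_{\ast}\mathcal{O}_{\X\setminus\Z}$ kills every object supported on $\Z$; concretely, $\Phi_{\Delta_{\ast}\R j_{\ast}\mathcal{O}_{\X\setminus\Z}}(\mathcal{F})\cong \R j_{\ast}(\mathcal{F}|_{\X\setminus\Z})=0$ because $\mathcal{F}|_{\X\setminus\Z}=0$. Taking the long exact triangle of Fourier--Mukai functors then yields $\Phi_{\Delta_{\ast}\R\underline{\Gamma_{\Z}}(\mathcal{O}_{\X})}\cong\Phi_{\mathcal{O}_{\Delta}}\cong \i\d$ on $\Perf_{\Z}(\X)$. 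One should also check that $\Delta_{\ast}\R\underline{\Gamma_{\Z}}(\mathcal{O}_{\X})$ genuinely lies in $\D_{\Z\times\Z,\qch}(\X\times\X)$ and that the functor it induces lands in $\Perf_{\Z}(\X)$, which again follows from the support of $\R\underline{\Gamma_{\Z}}(\mathcal{O}_{\X})$ being contained in $\Z$.

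For the uniqueness part I would invoke the generalization of Orlov's representability theorem to categories with support, namely \cite[Theorem 1.1]{canonaco_stellari_2014}: under the hypothesis that $\Z$ has no zero-dimensional components, the identity functor $\i\d\colon \Perf_{\Z}(\X)\to\Perf_{\Z}(\X)$ is represented by a kernel in $\D_{\Z\times\Z,\qch}(\X\times\X)$ that is unique up to isomorphism. Since I have exhibited $\Delta_{\ast}\R\underline{\Gamma_{\Z}}(\mathcal{O}_{\X})$ as one such kernel, and since $\E$ is assumed to lie in the same category $\D_{\Z\times\Z,\qch}(\X\times\X)$ with $\Phi_{\E}\cong\i\d$, the uniqueness clause forces $\E\cong\Delta_{\ast}\R\underline{\Gamma_{\Z}}(\mathcal{O}_{\X})$. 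The only real content here is matching the hypotheses of \cite{canonaco_stellari_2014} to the present setting, in particular that $\Perf_{\Z}(\X)$ with its natural enhancement is among the categories covered by that theorem and that the identity functor satisfies whatever finiteness/admissibility condition is imposed there; this was already observed in the paragraph preceding the statement.

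The main obstacle I expect is the careful handling of the triangulated/enhanced subtleties: the representability statement for kernels needs a class of objects with a well-behaved (convolution) product, and one must be sure the open complement argument ($\mathcal{F}|_{\X\setminus\Z}=0$) is applied to the correct pullback square, using flat base change along $\p_1,\p_2$ and the fact that $\Delta$ followed by a projection is the identity. There is also a minor compatibility point: the kernel $\R\underline{\Gamma_{\Z}}(\mathcal{O}_{\X})$ is a priori only in $\D_{\qch}$, not in $\Perf$, so the relevant notion of Fourier--Mukai functor is the one between derived categories of quasi-coherent sheaves with support, and I would make sure Theorem statements are phrased for that category. Once these bookkeeping issues are settled, both assertions are short.
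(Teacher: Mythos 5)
Your proposal is correct and follows essentially the same route as the paper: for the existence part, both arguments reduce (via the projection formula for the closed immersion $\Delta$) the convolution $\Phi_{\Delta_{\ast}\R\underline{\Gamma_{\Z}}\mathcal{O}_{\X}}(\F)$ to $\F\otimes^{\L}\R\underline{\Gamma_{\Z}}\mathcal{O}_{\X}$, then invoke the local-cohomology triangle together with $\R j_{\ast}j^{\ast}\F\cong 0$ to conclude; and for uniqueness, both appeal to \cite[Theorem 1.1]{canonaco_stellari_2014}. The one small step you compress is that the paper first applies \cite[Lemma 5.3]{canonaco_stellari_2014} to pass from $\E\in\D_{\Z\times\Z,\qch}(\X\times\X)$ to $\E\in\D^{\b}(\Qch_{\Z\times\Z}(\X\times\X))$ before the uniqueness clause of Theorem 1.1 can be applied; you flag the need to match hypotheses in general terms but do not name this particular reduction.
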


  Before proving Theorem \ref{kernelsupport}, we propose some interesting lemmas about projection formula and support of objects, which will be used in later calculations too.

 \begin{lem}\label{projformula}\cite[\href{https://stacks.math.columbia.edu/tag/01E6}{Tag 01E6}]{stacks-project}
  Let $\f: \X\rightarrow \Y$ be morphism of ringed space. Suppose
  f maps X homeomorphic into closed subset, then the general
  projection formula holds: $\E\in \mathsf{D}(\mathcal{O}_{\X})$, $\F\in \mathsf{D}(\mathcal{O}_{\Y})$. Then
  $$\R\f_{\ast}(\E\otimes^{\L} \L\f^{\ast}\F)\cong \R\f_{\ast}\E\otimes^{\L} \F.$$
  In particular, it is true for closed immersion.
\end{lem}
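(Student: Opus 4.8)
The plan is to write down the canonical projection-formula morphism and then check it is an isomorphism stalk by stalk, using that $\f$ is a homeomorphism onto its closed image. First I would recall the construction of the natural map
$$\mu\colon \R\f_{\ast}\E\otimes^{\L}_{\mathcal{O}_{\Y}}\F\longrightarrow \R\f_{\ast}\bigl(\E\otimes^{\L}_{\mathcal{O}_{\X}}\L\f^{\ast}\F\bigr):$$
it is the transpose, along the $(\L\f^{\ast},\R\f_{\ast})$-adjunction, of the composite $\L\f^{\ast}(\R\f_{\ast}\E\otimes^{\L}\F)\cong\L\f^{\ast}\R\f_{\ast}\E\otimes^{\L}\L\f^{\ast}\F\to\E\otimes^{\L}\L\f^{\ast}\F$, where the first isomorphism is the (symmetric) monoidality of $\L\f^{\ast}$ and the second arrow is induced by the counit $\L\f^{\ast}\R\f_{\ast}\E\to\E$. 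Since being an isomorphism in $\D(\mathcal{O}_{\Y})$ is detected on stalks, it then suffices to prove that $\mu$ induces an isomorphism on the stalk at each point $\y\in\Y$.

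Next I would record the two structural facts that make the hypothesis bite. Write $\W:=\f(\X)$, a closed subset. The pushforward $\f_{\ast}$ on sheaves of modules is \emph{exact}: at $\y\notin\W$ the stalk $(\f_{\ast}\mathcal{G})_{\y}$ vanishes (the closed set $\W$ misses a neighbourhood of $\y$), while at $\y=\f(\x)\in\W$ the preimages $\f^{-1}(\V)$ of open neighbourhoods $\V$ of $\y$ are cofinal among open neighbourhoods of $\x$, so that $(\f_{\ast}\mathcal{G})_{\y}\cong\mathcal{G}_{\x}$ as $\mathcal{O}_{\Y,\y}$-modules; in both cases taking stalks of $\f_{\ast}$ is exact. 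Consequently $\R\f_{\ast}$ is computed termwise by $\f_{\ast}$ on all of $\D(\mathcal{O}_{\X})$, including unbounded complexes, and the stalk identity $(\f_{\ast}(-))_{\y}\cong(-)_{\x}$ extends to complexes. I would also note the standard identification $(\f^{\ast}\mathcal{F})_{\x}\cong\mathcal{O}_{\X,\x}\otimes_{\mathcal{O}_{\Y,\y}}\mathcal{F}_{\y}$.

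Then I would reduce the derived statement to an honest computation. Choose $\K$-flat representatives $\mathcal{E}^{\bullet}$ of $\E$ and $\mathcal{F}^{\bullet}$ of $\F$, so that $\L\f^{\ast}\F=\f^{\ast}\mathcal{F}^{\bullet}$, the derived tensor products are the naive ones, $\R\f_{\ast}$ is $\f_{\ast}$ termwise, and $\mu$ is represented by the evident termwise projection map $\f_{\ast}\mathcal{E}^{\bullet}\otimes_{\mathcal{O}_{\Y}}\mathcal{F}^{\bullet}\to\f_{\ast}(\mathcal{E}^{\bullet}\otimes_{\mathcal{O}_{\X}}\f^{\ast}\mathcal{F}^{\bullet})$. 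Taking stalks at $\y=\f(\x)\in\W$ and using the facts above, the left-hand side becomes $\mathcal{E}^{\bullet}_{\x}\otimes_{\mathcal{O}_{\Y,\y}}\mathcal{F}^{\bullet}_{\y}$ and the right-hand side becomes $\mathcal{E}^{\bullet}_{\x}\otimes_{\mathcal{O}_{\X,\x}}(\mathcal{O}_{\X,\x}\otimes_{\mathcal{O}_{\Y,\y}}\mathcal{F}^{\bullet}_{\y})\cong\mathcal{E}^{\bullet}_{\x}\otimes_{\mathcal{O}_{\Y,\y}}\mathcal{F}^{\bullet}_{\y}$, and one checks that $\mu$ realizes exactly this canonical identification of total complexes; at $\y\notin\W$ both sides vanish. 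Hence $\mu$ is a stalkwise quasi-isomorphism, so an isomorphism in $\D(\mathcal{O}_{\Y})$. The assertion for a closed immersion is the special case, a closed immersion being a homeomorphism onto a closed subset.

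The step I expect to need the most care is not deep but bookkeeping: justifying that $\R\f_{\ast}$ is genuinely the termwise $\f_{\ast}$ on unbounded complexes (so that arbitrary $\K$-flat representatives, rather than $\K$-injective ones, may be used) and that taking stalks commutes with the total-complex tensor products that appear — both are formal consequences of exactness of $\f_{\ast}$ and of the stalk functors, but they are the points one should not skip. Alternatively, one may simply invoke \cite[Tag 01E6]{stacks-project} for the general projection formula, with the outline above serving as the argument behind that reference.
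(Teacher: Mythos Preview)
The paper does not prove this lemma at all; it merely states it with a citation to \cite[Tag 01E6]{stacks-project} and moves on. Your proposal supplies exactly the standard stalkwise argument behind that reference (exactness of $\f_{\ast}$ when $\f$ is a homeomorphism onto a closed subset, reduction to termwise $\f_{\ast}$ on complexes, $\K$-flat resolution of $\F$, and cancellation of $\mathcal{O}_{\X,\x}$ in the stalk computation), and it is correct; in fact only $\F$ needs a $\K$-flat representative, since $\f^{\ast}$ preserves $\K$-flatness and then both derived tensors become naive. Your closing remark that one may simply invoke the cited tag is precisely what the paper does.
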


 \begin{rem}\
 \begin{enumerate}
   \item Derived Tensor product, pull back and pull forward maps can be defined in generality of $\mathcal{O}_{\X}$ module, namely using $\K$-injective and $\K$-flat resolution to define. See "resolution of unbounded complex"\cite{CM_1988__65_2_121_0}
   \item We will use the projection formula in case of closed immersion, open immersion and projection. More generally, the projection formula holds for quasi-separated quasi-compact morphism of schemes\cite[\href{https://stacks.math.columbia.edu/tag/08ET}{Tag 08ET}]{stacks-project} for $\E,\F \in \D_{\qch}(\bullet)$.
 \end{enumerate}

\end{rem}

\begin{lem}\label{support}
  Let $\X$ be an algebraic variety, $\F\in \mathsf{D}_{\mathsf{ch}}^{\mathsf{b}}(\X)$. Take a closed point $\x$, consider the embedding $\l_{\x}\colon \x\hookrightarrow \X$, then $\x$ is in the support of $\F$ if and only if $\mathbb{L}\l^{\ast}_{\x}\F\neq 0$.
  \end{lem}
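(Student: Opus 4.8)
The plan is to reduce the assertion to a statement about a complex of modules over the local ring $\R := \mathcal{O}_{\X,\x}$. First I would recall that $\mathsf{Supp}\,\F = \bigcup_{\i}\mathsf{Supp}\,\mathcal{H}^{\i}(\F)$ and that $\mathcal{H}^{\i}(\F)_{\x}\cong \mathcal{H}^{\i}(\F_{\x})$, where $\F_{\x}$ denotes the complex of stalks, an object of $\mathsf{D}^{\mathsf{b}}_{\mathsf{coh}}(\R)$ with finitely generated cohomology; thus $\x\in\mathsf{Supp}\,\F$ if and only if $\F_{\x}\not\cong 0$ in $\mathsf{D}(\R)$. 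Since $\mathbb{L}\l_{\x}^{\ast}\F$ is computed locally by a bounded-above complex of locally free sheaves, one has $\mathbb{L}\l_{\x}^{\ast}\F\cong \F_{\x}\otimes^{\mathbb{L}}_{\R}\k(\x)$. So it remains to show: for $\R$ Noetherian local with residue field $\k(\x)$, a complex $\C^{\bullet}\in\mathsf{D}^{\mathsf{b}}_{\mathsf{coh}}(\R)$ is acyclic if and only if $\C^{\bullet}\otimes^{\mathbb{L}}_{\R}\k(\x)$ is acyclic.

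The "only if" direction is immediate. For the converse I would use the hyper-Tor spectral sequence
$$\E_{2}^{\p,\q}=\mathsf{Tor}^{\R}_{-\p}(\mathcal{H}^{\q}(\C^{\bullet}),\k(\x))\ \Longrightarrow\ \mathcal{H}^{\p+\q}(\C^{\bullet}\otimes^{\mathbb{L}}_{\R}\k(\x)),$$
whose $\E_{2}$-page is concentrated in the region $\p\le 0$. Assume $\C^{\bullet}\not\cong 0$ and let $\b$ be the largest integer with $\mathcal{H}^{\b}(\C^{\bullet})\neq 0$, so that $\E_{2}^{\p,\q}=0$ whenever $\q>\b$. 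On the antidiagonal $\p+\q=\b$ the only possibly nonzero entry is $\E_{2}^{0,\b}=\mathcal{H}^{\b}(\C^{\bullet})\otimes_{\R}\k(\x)$, and every differential into or out of this spot lands in a zero group (the target either has $\p>0$ or the source has $\q>\b$); hence $\E_{\infty}^{0,\b}=\E_{2}^{0,\b}$ and therefore $\mathcal{H}^{\b}(\C^{\bullet}\otimes^{\mathbb{L}}_{\R}\k(\x))\cong \mathcal{H}^{\b}(\C^{\bullet})\otimes_{\R}\k(\x)$. By Nakayama's lemma the right-hand side is nonzero, since $\mathcal{H}^{\b}(\C^{\bullet})$ is a nonzero finitely generated $\R$-module. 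Thus $\C^{\bullet}\otimes^{\mathbb{L}}_{\R}\k(\x)\not\cong 0$, which is what we wanted. Translating back through the reduction gives both implications of the lemma.

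The only subtlety worth flagging is that $\X$ is allowed to be singular, so $\R$ may have infinite global dimension and $\mathbb{L}\l_{\x}^{\ast}\F$ may be unbounded below; this is precisely why the argument is run at the \emph{top} cohomological degree rather than the bottom, where one would instead need finite Tor-dimension. Apart from this the proof is entirely formal, and one could alternatively just invoke the corresponding standard fact for bounded complexes with coherent cohomology over a Noetherian local ring (e.g.\ in \cite{stacks-project}).
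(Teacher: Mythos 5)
Your proof is correct, and it coincides essentially verbatim with the alternative argument the paper itself records in the Remark immediately following the lemma (the hyper-Tor spectral sequence $\E_{2}^{\p,\q}=\Tor^{\R}_{-\p}(\mathcal{H}^{\q},\k(\x))$, top nonvanishing row, Nakayama). The paper's primary proof is organized differently — it uses the canonical truncation triangle $\F_{\leq \m-1}\to\F\to\F_{>\m-1}$, computes $\mathbb{L}\l_{\x}^{\ast}\F_{>\m-1}$ by a flat resolution of $\mathcal{H}^{\m}(\F)$, applies Nakayama there, and derives a contradiction from degree bounds on $\mathbb{L}\l_{\x}^{\ast}\F_{\leq\m-1}[1]$ — but this is the same idea unrolled by hand rather than packaged into a spectral sequence, so the two arguments are substantively equivalent. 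Your remark about working at the \emph{top} cohomological degree to sidestep possible infinite Tor-dimension over a singular local ring is exactly the right point to flag and is implicit in both of the paper's versions.
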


\begin{proof}
  Suppose $\x$ is in the support of $\F$, but $\mathbb{L}\l^{*}_{\x}(\F)= 0$. Let $\x\in$ support of $\mathcal{H}^{\m}(\F)$ with maximal integer $\m$.
  Then, there is a canonical truncation:
  $\F_{\leq \m-1}\rightarrow \F\rightarrow \F_{> \m-1}\rightarrow \F_{\leq \m-1}[1] .$
  Since $\mathbb{L}\l_{\x}^{\ast}\F=0$, we have isomorphism
  $\mathbb{L}\l_{\x}^{\ast}\F_{>\m-1}\cong \mathbb{L}\l_{\x}^{\ast}\F_{\leq \m-1}[1]$.
  Since $\l_{\x}^{-1}\F_{>\m-1}\cong \l_{\x}^{-1}\mathcal{H}^{\m}(\F)[-\m]$, therefore $\mathbb{L}\l_{\x}^{\ast}\F_{>\m-1}\cong \l_{\x}^{-1}\mathcal{H}^{\m}(\F)[-\m]\otimes^{\L} \k(\x)\cong \mathbb{L}\l_{\x}^{\ast}\mathcal{H}^{\m}(\F)[-\m].$
  Take the flat resolution$\colon$
  $$\cdots\rightarrow \I_{1}\rightarrow \I_{0}\rightarrow \mathcal{H}^{\m}(\F).$$
  Apply the functor (without derived) $\l^{\ast}_{\x}$:
  $$\cdots\rightarrow \l^{\ast}_{\x}\I_{1}\rightarrow \l^{\ast}_{\x}\I_{0}\rightarrow 0.$$
  Then the $0^{\th}$ homology gives $\mathcal{H}^{\m}(\F)_{\x}\otimes \k(\x)$.
  Since by assumption, $\x\in$ support $\mathcal{H}^{\m}(\F)$, then $\mathcal{H}^{\m}(\F)_{\x}\neq 0$.
  Hence by Nakayama Lemma, $\mathcal{H}^{\m}(\F)_{\x}\otimes \k(\x)\neq 0$. Therefore, $\mathbb{L}\l_{\x}^{\ast}\F_{>\m-1}\cong \mathbb{L}\l_{\x}^{\ast}\F_{\leq \m-1}[1]\neq 0$ has non zero degree $\m$ homology. However, $\F_{\leq \m-1}[1]$ survives in degree less than $\m$, hence after taking a flat resolution, it survives in degree less than $\m$. But it means that $\mathbb{L}\l_{\x}^{\ast}(\F_{\leq \m-1}[1])$ has zero $\m$ homology, a contradiction.\\
   Suppose $\mathbb{L}\l^{\ast}_{\x}\F\neq 0$, but $\x$ $\notin$ support of $\F$. Since $\mathbb{L}\l_{\x}^{\ast}\F= \l_{\x}^{-1}\F\otimes_{\mathcal{O}_{\X,\x}}^{\L}\k(\x)$,
  and $\l_{\x}^{-1}\F$ is an acyclic complex, we have
  $ \mathbb{L}\l_{\x}^{\ast}\F\cong 0$, a contradiction.
  \end{proof}

\begin{rem}
 There is another proof using spectral sequence. Consider the spectral sequence
 $$\E_{2}^{\p,\q}= \Tor_{-\p}(\mathcal{H}^{\q}(\l_{\x}^{-1}\F),\k(\x)) \quad \Rightarrow \quad \mathcal{H}^{-\p-\q}(\mathbb{L}\l_{\x}^{\ast}\F).$$
Suppose $\mathbb{L}\l_{\x}^{\ast}\F\neq 0$, but $\x$ $\notin$ support $\F$. Then $\E_{2}^{\p,\q}=0$, a contradiction. Suppose $\x$ $\in$ support $\F$, but $\mathbb{L}\l_{\x}^{\ast}\F=0$. Let $\m$ be the maximal integer such that $\x$ $\in$ support of $\mathcal{H}^{\m}(\F)$. Then $\E_{2}^{0,\m} \neq 0$ and degenerates to $\E_{\infty}$, a contradiction.
\end{rem}

\begin{proof}[Proof of Theorem \ref{kernelsupport}]
  Let $\F \in \D_{\Z}(\Qch(\X))$. Actually, by \cite[Theorem 1.1]{canonaco_stellari_2014}), we need to check perfect objects $\F$, but here, it is true for more general $\F$. Since the diagonal morphism is a closed embedding, the projection formula
always holds for $\mathcal{O}_{\X}$ module.
   $$ \R\p_{2\ast}(\R\p_{1}^{\ast}\F\otimes^{\L} \Delta_{\ast}\R\underline{\Gamma_{\Z}}\mathcal{O}_{\X})
      \cong \R\p_{2\ast}\Delta_{\ast}((\mathbb{L}\Delta^{\ast}\R\p_{1}^{\ast}\F)\otimes^{\L} \R\underline{\Gamma_{\Z}}\mathcal{O}_{\X})
       \cong \F\otimes^{\L} \R\underline{\Gamma_{\Z}}\mathcal{O}_{\X}.$$

Denote $\j: \X\setminus \Z \rightarrow \X$, there is a triangle
         $$ \R\underline{\Gamma_{\Z}}\mathcal{O}_{\X}\rightarrow \mathcal{O}_{\X}\rightarrow \R\j_{\ast}\j^{\ast}\mathcal{O}_{\X}\rightarrow  \R\underline{\Gamma_{\Z}}\mathcal{O}_{\X}[1].$$
Tensor (derived sense) with $\F$, there is a new triangle
 $$ \R\underline{\Gamma_{\Z}}\mathcal{O}_{\X}\otimes^{\L} \F \rightarrow \mathcal{O}_{\X}\otimes^{\L} \F \rightarrow \R\j_{\ast}\j^{\ast}\mathcal{O}_{\X}\otimes^{\L} \F\rightarrow \R\underline{\Gamma_{\Z}}\mathcal{O}_{\X}\otimes^{\L} \F[1].$$
Clearly, $\R\j_{\ast}j^{\ast}\mathcal{O}_{\X}\otimes^{\L} \F \cong \R\j_{\ast}j^{\ast}\F\cong 0$, and hence $\R\underline{\Gamma_{\Z}}\mathcal{O}_{\X}\otimes^{\L} \F\cong \F.$ Suppose $\E\in \D(\Qch_{\Z\times\Z}(\X\times\X))$ such that $\Phi_{\E}\cong \i\d$. Then according to \cite[Lemma 5.3]{canonaco_stellari_2014}, $\E\in \D^{\b}(\Qch_{\Z\times\Z}(\X\times\X))$. Thus, according to \cite[Theorem 1.1]{canonaco_stellari_2014}, $\E\cong \Delta_{\ast}\R\underline{\Gamma_{\Z}}\mathcal{O}_{\X}$.
\end{proof}

\subsection{The computations}
\begin{thm}\label{gfkernel}
 Let $\X$ be a projective smooth variety and $\Z$ is a closed subscheme with no zero dimensional components. Then

  $$\HH_{\i}(\Perf_{\Z}(\X))\cong \Hom^{\i}(\mathcal{O}_{\X\times \X},\Delta_{\ast}\R\underline{\Gamma_{\Z}}(\mathcal{O}_{\X})\otimes^{\L}\Delta_{\ast}\R\underline{\Gamma_{\Z}}(\mathcal{O}_{\X}))\cong\bigoplus_{\p-\q=\i}\H_{\Z}^{\p}(\X,\Omega_{\X}^{\q}).$$

\end{thm}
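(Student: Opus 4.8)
The plan is to compute the Hochschild homology of $\Perf_{\Z}(\X)$ by reinterpreting the Hochschild complex of the $\dg$ enhancement in terms of the Fourier--Mukai kernel of the identity functor, and then to recover the sheaf-cohomological formula independently of Theorem \ref{geometryformula}. The starting point is the standard fact (going back to Cald\u{a}raru, and explained in Keller's framework) that for a unique-enhanced triangulated category arising as $\Perf_{\Z}(\X)$, the Hochschild complex is computed by $\RHom$ of kernels on the product: if $\K$ denotes the kernel of the identity functor of $\Perf_{\Z}(\X)$, then $\HH_{\bullet}(\Perf_{\Z}(\X)) \cong \Hom^{-\bullet}(\K^{\vee}, \K)$, or equivalently $\Hom^{\bullet}(\mathcal{O}_{\Delta\text{-type object}}, \K \otimes^{\L} \K)$ after dualizing. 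The first step is therefore to invoke Theorem \ref{kernelsupport}, which identifies this kernel precisely as $\Delta_{\ast}\R\underline{\Gamma_{\Z}}(\mathcal{O}_{\X})$, giving the middle expression in the statement.

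Second, I would identify the left-hand $\RHom$ group. Writing $\K = \Delta_{\ast}\R\underline{\Gamma_{\Z}}(\mathcal{O}_{\X})$, one computes $\Hom^{\i}_{\X\times\X}(\mathcal{O}_{\X\times\X}, \K\otimes^{\L}\K)$. Using that $\Delta$ is a closed immersion and the projection formula of Lemma \ref{projformula}, the tensor product $\K\otimes^{\L}\K = \Delta_{\ast}\R\underline{\Gamma_{\Z}}(\mathcal{O}_{\X}) \otimes^{\L} \Delta_{\ast}\R\underline{\Gamma_{\Z}}(\mathcal{O}_{\X})$ collapses: pulling one factor back along $\Delta$ via $\mathbb{L}\Delta^{\ast}\Delta_{\ast}$ introduces the exterior algebra on the conormal bundle, i.e. $\bigoplus_{\q}\Omega^{\q}_{\X}[\q]$ by the usual Hochschild--Kostant--Rosenberg computation for the diagonal of a smooth variety, while the support conditions are absorbed into $\R\underline{\Gamma_{\Z}}$. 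Concretely, after pushing forward, $\R\Hom_{\X\times\X}(\mathcal{O}_{\X\times\X}, \K\otimes^{\L}\K) \cong \R\Gamma_{\Z}\big(\X, \bigoplus_{\q}\Omega^{\q}_{\X}[\q]\big)$, and taking cohomology in degree $\i$ gives $\bigoplus_{\p-\q=\i}\H^{\p}_{\Z}(\X,\Omega^{\q}_{\X})$, matching Theorem \ref{geometryformula}. Alternatively, one can just cite Theorem \ref{geometryformula} directly for the last isomorphism, since both sides have already been shown equal to $\HH_{\i}(\Perf_{\Z}(\X))$.

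The main obstacle I anticipate is justifying the first isomorphism rigorously in the support setting: one must check that the $\dg$-categorical Hochschild complex of the (unique) enhancement of $\Perf_{\Z}(\X)$ really is computed by $\RHom$ of the Fourier--Mukai kernel of the identity, i.e. that the ``kernel-theoretic'' Hochschild homology agrees with the intrinsic one. For $\Perf(\X)$ this is classical, but for $\Perf_{\Z}(\X)$ one needs the uniqueness of enhancement (Lemma \ref{enhancedperf}), the description of the kernel of the identity functor (Theorem \ref{kernelsupport}), and the compatibility of the $\dg$-pair Hochschild complex of Proposition \ref{dgpairsX} with the convolution/$\RHom$ description; this is where the care about which category of kernels ($\D_{\Z\times\Z,\qch}$ versus perfect) and about boundedness enters, precisely the content pulled in from \cite{canonaco_stellari_2014}. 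Once that identification is in place, the rest is a formal manipulation with the projection formula and HKR, so the heart of the argument is setting up the kernel-theoretic formula correctly.
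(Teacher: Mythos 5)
The first half of your proposal --- invoking a Cald\u{a}raru-type identity
$\HH_{\bullet}(\Perf_{\Z}(\X)) \cong \Hom^{\bullet}(\mathcal{O}_{\X\times\X}, \K\otimes^{\L}\K)$ with $\K$ the kernel of the identity functor --- is exactly where the gap lies, and you yourself flag it as ``the main obstacle.'' You do not actually close it: you name the relevant ingredients (uniqueness of enhancement, the kernel identification, Canonaco--Stellari) but never give an argument that the dg-categorical Hochschild complex of $\Perf_{\Z}(\X)$ is computed by this $\RHom$ on the product. For $\Perf(\X)$ this is classical, but there is no ``off-the-shelf'' version for $\Perf_{\Z}(\X)$; establishing it is most of the content of the theorem, so writing ``invoke the standard fact'' does not suffice.

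The paper's proof does this by a route your proposal never mentions: it chooses a compact generator $\E$ of $\D_{\Z,\qch}(\X)$ (Lemma~\ref{dgalge}), sets $\Lambda=\R\Hom(\E,\E)$, and uses the derived Morita equivalence $\Perf(\Lambda)\cong\Perf_{\Z}(\X)$ to reduce to the \emph{algebraic} formula $\HH_{\bullet}(\Lambda)\cong \H_{\bullet}\bigl(\Lambda\otimes^{\L}_{\Lambda^{\op}\otimes\Lambda}\Lambda\bigr)$. It then proves, by a semi-free resolution induction and the identity $\R\Gamma(\X,\E^{\vee}\otimes^{\L}\E)\cong\Lambda$, that bimodule derived tensor products over $\Lambda^{\op}\otimes\Lambda$ can be rewritten as $\R\Gamma$ of convolutions on $\X\times\X$ against $\E^{\vee}\boxtimes\E$. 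Combined with Corollary~\ref{Nkernel} (which itself rests on Theorem~\ref{kernelsupport} plus the supported-kernel uniqueness of Canonaco--Stellari, which is where the hypothesis that $\Z$ has no zero-dimensional components enters), this identifies $\Lambda\otimes^{\L}_{\Lambda^{\op}\otimes\Lambda}\Lambda$ with $\R\Gamma(\X\times\X,\Delta_{\ast}\R\underline{\Gamma_{\Z}}\mathcal{O}_{\X}\otimes^{\L}\Delta_{\ast}\R\underline{\Gamma_{\Z}}\mathcal{O}_{\X})$, which is the middle term of the theorem. This concrete generator argument is the essential step you would need to supply.

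Your sketch of the second isomorphism (projection formula along $\Delta$, the HKR identification $\L\Delta^{\ast}\Delta_{\ast}\mathcal{O}_{\X}\cong\bigoplus_{\q}\Omega^{\q}[\q]$, and absorbing the two copies of $\R\underline{\Gamma_{\Z}}$ into a single $\R\Gamma_{\Z}$) is correct in outline and matches the paper in spirit, though the paper is more careful: it repeatedly uses the triangle $\R\underline{\Gamma_{\Z}}\mathcal{O}_{\X}\to\mathcal{O}_{\X}\to\R\j_{\ast}\j^{\ast}\mathcal{O}_{\X}$ and support arguments to justify the two collapses $\K\otimes^{\L}\K\cong\Delta_{\ast}\mathcal{O}_{\X}\otimes^{\L}\K$ and $\R\underline{\Gamma_{\Z}}\mathcal{O}_{\X}\otimes^{\L}\bigoplus\Omega^{\q}[\q]\cong\R\underline{\Gamma_{\Z}}\bigoplus\Omega^{\q}[\q]$, rather than just saying the support conditions are ``absorbed.'' Finally, note a small degree-convention slip: you write $\Hom^{-\bullet}(\K^{\vee},\K)$, whereas the theorem's statement uses $\Hom^{\i}(\mathcal{O}_{\X\times\X},\K\otimes^{\L}\K)$, which is $\Hom^{+\bullet}$ under the perfect-complex duality, consistent with the paper's convention that the Hochschild homology of an algebra sits in nonpositive degrees.
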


\begin{rem}
  We assume $\Z$ to be closed subscheme with no zero dimensional components since the kernel for $\i\d$ is proved to be unique up to isomorphism in this case. We hope that it is true more generally.
\end{rem}

 Before proving Theorem \ref{gfkernel}, we need some preparations.

 \begin{lem}\label{dgalge}
   Let $\X$ be a smooth projective variety and $\Z$ is a closed subscheme of $\X$. Then the categories
   $\D_{\qch}(\X)$ and $\D_{\Z,\qch}(\X)$ have a compact generator respectively \cite[Theorem 6.8]{rouquier_2008}. In particular, they are derived equivalent to derived category of some $\dg$ algebras.
 \end{lem}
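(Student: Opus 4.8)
The plan is to deduce the lemma from two standard inputs: Rouquier's existence theorem for compact generators of the derived category of quasi-coherent sheaves on a scheme (together with its variant for complexes supported on a closed subset), and Keller's derived Morita theory for differential graded categories. For $\D_{\qch}(\X)$ one recalls that the category is compactly generated, its compact objects being precisely the perfect complexes \cite{bondal2002generators}, and that by \cite[Theorem 6.8]{rouquier_2008} a single perfect complex $G$ already generates it; concretely one may take $G$ to be a suitable finite sum of twists of an ample line bundle, generation being standard from ampleness. For $\D_{\Z,\qch}(\X)$ --- the full localizing subcategory of $\D_{\qch}(\X)$ of complexes whose cohomology sheaves are set-theoretically supported on $\Z$ --- the same \cite[Theorem 6.8]{rouquier_2008}, which is stated for closed subsets with quasi-compact complement, furnishes a compact generator $G_{\Z}$ lying in $\Perf_{\Z}(\X)$. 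Alternatively, since $\X$ is smooth, $\mathcal{O}_{\Z}$ is a perfect complex with support exactly $\Z$, and one checks, using the projection formula $\R\underline{\Gamma_{\Z}}\F\cong\R\underline{\Gamma_{\Z}}\mathcal{O}_{\X}\otimes^{\L}\F$ of Lemma \ref{projformula} together with the fact that $\R\underline{\Gamma_{\Z}}\mathcal{O}_{\X}$ lies in the localizing tensor-ideal generated by $\mathcal{O}_{\Z}$, that $G_{\Z}:=\mathcal{O}_{\Z}\otimes^{\L}G$ is a compact generator of $\D_{\Z,\qch}(\X)$.

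Granting a single compact generator $G$, resp.\ $G_{\Z}$, the second sentence of the lemma is pure derived Morita theory. I would fix a $\dg$ enhancement of $\D_{\qch}(\X)$ --- for instance the $\dg$ category of $\K$-injective complexes of quasi-coherent sheaves --- and its full $\dg$ subcategory of objects supported on $\Z$, and set $\Lambda:=\RHom_{\dg}(G,G)$ and $\Lambda_{\Z}:=\RHom_{\dg}(G_{\Z},G_{\Z})$. By Keller's theorem \cite{keller2006differential}, an algebraic compactly generated triangulated category with a compact generator $G$ is triangle-equivalent, via $\RHom_{\dg}(G,-)$, to the derived category of $\dg$ modules over $\RHom_{\dg}(G,G)$; this yields $\D_{\qch}(\X)\simeq\D(\Lambda)$ and $\D_{\Z,\qch}(\X)\simeq\D(\Lambda_{\Z})$, the asserted equivalences with derived categories of $\dg$ algebras.

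The only point that is not purely formal is the support case: one needs that $\D_{\Z,\qch}(\X)$ is compactly generated with subcategory of compact objects exactly $\Perf_{\Z}(\X)$, which follows from Neeman's localization theorem applied to the Verdier quotient $\D_{\qch}(\X)\to\D_{\qch}(\X\setminus\Z)$, and, if one wants the explicit generator $\mathcal{O}_{\Z}\otimes^{\L}G$, the standard but slightly delicate fact that $\R\underline{\Gamma_{\Z}}\mathcal{O}_{\X}$ belongs to the localizing subcategory generated by $\mathcal{O}_{\Z}$. Once these are in place, everything downstream --- reducing a generating set to a single generator by taking finite coproducts of compacts, the existence of the $\dg$ enhancement, and Keller's equivalence --- is routine.
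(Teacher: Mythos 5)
Your proof follows essentially the same route as the paper's: obtain a compact generator $G$ (resp.\ $G_{\Z}$) via Rouquier's Theorem 6.8, form the $\dg$ endomorphism algebra $\Lambda=\RHom_{\dg}(G,G)$ on a suitable ($\K$-injective) resolution, and conclude by derived Morita theory that $\D_{\qch}(\X)\simeq\D(\Lambda)$ and likewise with supports. The paper's proof is terser --- it simply cites Rouquier for existence and writes down the mutually inverse functors $\E\otimes^{\L}_{\Lambda}(-)$ and $\R\Hom(\E,-)$ --- whereas you additionally flag the Neeman localization input identifying $\D_{\Z,\qch}(\X)^{\mathsf{c}}\simeq\Perf_{\Z}(\X)$ and offer the explicit alternative generator $\mathcal{O}_{\Z}\otimes^{\L}G$, but the structure of the argument is the same.
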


 \begin{proof}
   For derived category without support, it is well known. The case of support is similar. Let $\E$ be a compact generator of $\D_{\Z,\qch}(\X)$. We write $\E$ again after resolution to a $\K$-injective perfect complex. Define $\Lambda=\Hom_{\dg}(\E,\E)$. Then
   $$ \L\colon \mathsf{D}(\Lambda)\rightarrow \D_{\Z,\qch}(\X),\quad \quad \M \mapsto \E\otimes^{\L}_{\Lambda}\M.$$
 $$\R\Hom(\E,\cdot)\colon \D_{\Z,\qch}(\X)\rightarrow \mathsf{D}(\Lambda),\quad \quad \F \mapsto \R\Hom(\E,\F).$$
 define equivalence $\D(\Lambda)\cong\D_{\Z,\qch}(\X)$.
 \end{proof}

 \begin{lem}
   Let $\E$ be a compact generator of $\D_{\Z,\qch}(\X)$. Then $\E^{\vee}$ is also a compact generator too. In particular, $\E^{\vee}\boxtimes \E$ is a compact generator of $\D_{\Z\times\Z,\qch}(\X\times\X)$.
 \end{lem}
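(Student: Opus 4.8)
The plan is to establish the two assertions separately: that $\E^{\vee}$ is again a compact generator is a matter of formal duality, while the claim about $\E^{\vee}\boxtimes\E$ is the standard ``external product of compact generators is a compact generator'', transplanted to the supported setting. I will use repeatedly that, $\X$ being smooth, the subcategory of compact objects of $\D_{\Z,\qch}(\X)$ is exactly $\Perf_{\Z}(\X)$, the perfect complexes on $\X$ with support contained in $\Z$: indeed $\D_{\Z,\qch}(\X)$ is the kernel of restriction along the open immersion $\X\setminus\Z\hookrightarrow\X$, and by the Thomason--Neeman localization theorem it is the localizing subcategory of $\D_{\qch}(\X)$ generated by these perfect complexes, so that its compact objects form their thick closure, namely $\Perf_{\Z}(\X)$ itself (this is the mechanism behind Lemma \ref{dgalge} and \cite[Theorem 6.8]{rouquier_2008}).

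For the first assertion: by the above $\E\in\Perf_{\Z}(\X)$, and $\E^{\vee}=\R\mathcal{H}om_{\mathcal{O}_{\X}}(\E,\mathcal{O}_{\X})$ is again perfect with the same support, hence also lies in $\Perf_{\Z}(\X)$. Saying that $\E$ is a compact generator of $\D_{\Z,\qch}(\X)$ is the same as saying $\mathrm{thick}(\E)=\Perf_{\Z}(\X)$. Now $(-)^{\vee}$ is an exact anti-autoequivalence of $\Perf_{\Z}(\X)$, and the smallest thick subcategory containing a given object is unaffected by reversing arrows; therefore $\mathrm{thick}(\E^{\vee})$ is the image under $(-)^{\vee}$ of $\mathrm{thick}(\E)=\Perf_{\Z}(\X)$, which is all of $\Perf_{\Z}(\X)$. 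So $\E^{\vee}$ is a compact generator.

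For the second assertion I would prove the more general statement that whenever $\G_{1},\G_{2}$ are compact generators of $\D_{\Z,\qch}(\X)$, the external product $\G_{1}\boxtimes\G_{2}=\p_{1}^{\ast}\G_{1}\otimes^{\L}\p_{2}^{\ast}\G_{2}$ is a compact generator of $\D_{\Z\times\Z,\qch}(\X\times\X)$, and then specialize to $(\G_{1},\G_{2})=(\E^{\vee},\E)$. Compactness is immediate: $\G_{1}\boxtimes\G_{2}$ is perfect and its support lies in $(\Z\times\X)\cap(\X\times\Z)=\Z\times\Z$, so it lies in $\Perf_{\Z\times\Z}(\X\times\X)$, the compact objects of $\D_{\Z\times\Z,\qch}(\X\times\X)$. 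For generation, put $\mathcal{L}:=\mathrm{Loc}(\G_{1}\boxtimes\G_{2})$. For fixed $\G_{1}$ the functor $\F\mapsto\G_{1}\boxtimes\F$ is exact, preserves arbitrary coproducts, and sends $\D_{\Z,\qch}(\X)$ into $\D_{\Z\times\Z,\qch}(\X\times\X)$; hence the objects $\F$ with $\G_{1}\boxtimes\F\in\mathcal{L}$ form a localizing subcategory of $\D_{\Z,\qch}(\X)$ containing $\G_{2}$, so all of them do, and the symmetric argument in the first variable, using that $\G_{1}$ generates, gives $\F_{1}\boxtimes\F_{2}\in\mathcal{L}$ for all $\F_{1},\F_{2}\in\D_{\Z,\qch}(\X)$. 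Finally I would check that $\D_{\Z\times\Z,\qch}(\X\times\X)$ is itself generated as a localizing subcategory by such external products: from flat base change and $\underline{\Gamma_{\Z\times\Z}}=\underline{\Gamma_{\Z\times\X}}\circ\underline{\Gamma_{\X\times\Z}}$ one gets $\R\underline{\Gamma_{\Z\times\Z}}(\mathcal{O}_{\X\times\X})\cong\R\underline{\Gamma_{\Z}}(\mathcal{O}_{\X})\boxtimes\R\underline{\Gamma_{\Z}}(\mathcal{O}_{\X})$, every object of $\D_{\Z\times\Z,\qch}(\X\times\X)$ is of the form $\R\underline{\Gamma_{\Z\times\Z}}(\mathcal{O}_{\X\times\X})\otimes^{\L}(-)$, and applying the exact, coproduct-preserving, essentially surjective functor $\R\underline{\Gamma_{\Z\times\Z}}(\mathcal{O}_{\X\times\X})\otimes^{\L}(-)\colon\D_{\qch}(\X\times\X)\to\D_{\Z\times\Z,\qch}(\X\times\X)$ to the generating set $\{\mathcal{O}_{\X}(a)\boxtimes\mathcal{O}_{\X}(b)\}_{a,b}$ of $\D_{\qch}(\X\times\X)$ produces the generating set $\{\R\underline{\Gamma_{\Z}}(\mathcal{O}_{\X}(a))\boxtimes\R\underline{\Gamma_{\Z}}(\mathcal{O}_{\X}(b))\}_{a,b}$ of $\D_{\Z\times\Z,\qch}(\X\times\X)$, which lies in $\mathcal{L}$ by the previous step. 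Hence $\mathcal{L}=\D_{\Z\times\Z,\qch}(\X\times\X)$.

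The only point I expect to require genuine care is this last one --- exhibiting enough of $\D_{\Z\times\Z,\qch}(\X\times\X)$ as generated by external products; it rests on the compatibility of local cohomology with the two projections (flat base change, together with $\underline{\Gamma_{\Z\times\Z}}=\underline{\Gamma_{\Z\times\X}}\circ\underline{\Gamma_{\X\times\Z}}$) and on the classical fact that $\{\mathcal{O}_{\X}(a)\boxtimes\mathcal{O}_{\X}(b)\}$ compactly generate $\D_{\qch}(\X\times\X)$. The remaining ingredients --- compactness of the external product, and the fact that the derived dual of a compact generator is again a compact generator --- are formal.
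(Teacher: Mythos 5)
Your treatment of the first assertion (that $\E^{\vee}$ remains a compact generator) is essentially the paper's: both pass through the identification $\D_{\Z,\qch}(\X)^{\c}\cong\Perf_{\Z}(\X)$ and the observation that $(-)^{\vee}$ is an anti-autoequivalence of $\Perf_{\Z}(\X)$, so that $\mathrm{thick}(\E^{\vee})=\Perf_{\Z}(\X)$ because $\mathrm{thick}(\E)$ is. For the second assertion you take a genuinely different route. The paper argues directly: it picks an arbitrary $\W\in\Perf_{\Z\times\Z}(\X\times\X)$ with $\Hom(\E^{\vee}\boxtimes\E[\i],\W)=0$ for all $\i$ and, by adjunction over the two projections together with a flat--base--change lemma showing that $\R\p_{1\ast}$ sends $\Z\times\Z$--supported objects to $\Z$--supported ones, peels off first the generating property of $\E^{\vee}$ and then of $\E$ to force $\W\cong0$. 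You instead run a localizing--subcategory argument: you show $\mathrm{Loc}(\E^{\vee}\boxtimes\E)$ swallows all external products by two applications of ``the objects $\F$ with $\G\boxtimes\F\in\mathcal{L}$ form a localizing subcategory'', and then reduce generation by external products to the non-supported case via the K\"unneth-type identity $\R\underline{\Gamma_{\Z\times\Z}}(\mathcal{O}_{\X\times\X})\cong\R\underline{\Gamma_{\Z}}(\mathcal{O}_{\X})\boxtimes\R\underline{\Gamma_{\Z}}(\mathcal{O}_{\X})$ and idempotence of $\R\underline{\Gamma_{\Z\times\Z}}(\mathcal{O}_{\X\times\X})\otimes^{\L}(-)$. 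Both are correct. What the paper's argument buys is that it develops in passing the support-preservation of $\R\p_{1\ast}$, which is reused in later computations; what yours buys is a cleaner, more modular proof that makes the role of $\R\underline{\Gamma_{\Z}}$ as a Bousfield-type colocalizing idempotent explicit, and it generalizes verbatim to external products of any two compact generators rather than the specific pair $(\E^{\vee},\E)$.
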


 \begin{proof}
 Again, this fact is well know for non support case. We provide a proof here which mimic the proof in  A.\ Bondal, Van den Bergh's paper \cite[Lemma 3.4.1]{bondal2002generators}.

 \begin{defn}
 Let $\mathcal{C}$ be a $\k$-linear triangulated category. We say $\E$ (could be set of objects) generates $\mathcal{C}$ if $\Hom(\E[\n],\c)=0$ for all integer $\n$ implies $\c=0$. $\E$ classical generates $\mathcal{C}$ if the minimal full triangulated subcategory of $\mathcal{C}$ containing $\E$ must be $\mathcal{C}$. The reader can refer to the paper \cite{bondal2002generators} for the definitions.
 \end{defn}

 \begin{lem}\label{compactgenerate}
 \cite{ASENS_1992_4_25_5_547_0} Assume $\mathcal{C}$ (triangulated $\k$-linear category) is compactly generated, that is $\mathcal{C}^{\c}$
 generates $\mathcal{C}$. Then a set of objects $\mathcal{E} \in \mathcal{C}^{\c}$ classically generates $\mathcal{C}^{\c}$ if and only if it generates $\mathcal{C}$.
 \end{lem}
 According to the Lemma \ref{compactgenerate}, it suffices to prove $\E^{\vee}$ classical generates
  $\Perf_{\Z}(\X)$.
  Consider $\F\in\Perf_{\Z}(\X)$, then $\F^{\vee}\in \Perf_{\Z}(\X)$, hence $\F^{\vee}\in$ $\langle \E \rangle$, taking dual, $\F\in$ $\langle \E^{\vee}\rangle$.

  \par

  Back to object $\E^{\vee}\boxtimes \E$, its support $\subseteq \Z\times \Z$,
  which can be easily proved by using Lemma \ref{support} above.
  Claim$\colon\E^{\vee}\boxtimes \E$ is a compact generator of $\D_{\Z\times\Z,\qch}(\X\times\X)$.
  It suffices to prove that $\E^{\vee}\boxtimes \E$ classical generates $\Perf_{\Z\times\Z}(\X\times\X)$ by Lemma \ref{compactgenerate}. But again by Neeman-Ravenel theorem, replace $\mathcal{C}$ by $\mathcal{C}^{\c}$, we conclude that $\E^{\vee}\boxtimes \E$ classical generates $\Perf_{\Z\times\Z}(\X\times\X)$ if and only if $\E^{\vee}\boxtimes \E$ generates $\Perf_{\Z\times\Z}(\X\times\X)$.\\
  Let $\W\in \Perf_{\Z\times\Z}(\X\times\X)$. Suppose $\Hom(\E^{\vee}\boxtimes \E[i], \W)=0$ for any integer $\i$.
  $$\Hom(\p_{1}^{\ast}\E^{\vee},\R\mathcal{H}\o\m_{\X\times \X}(\p^{\ast}_{2}\E, \W[-\i]))=0.$$
  For arbitrary integer $\m$, we obtain
  $$ \Hom(\E^{\vee}[\i+\m],\R\p_{1\ast}\R\mathcal{H}\o\m_{\X\times \X}(\p^{\ast}_{2}\E,\W[\m]))=0.$$
  $\R\p_{1\ast}\R\mathcal{H}\o\m_{\X\times \X}(\p^{\ast}_{2}\E,\W[\m])$ supports in $\Z$, the proof is as follows$\colon$Firstly, the support of $\R\mathcal{H}\o\m_{\X\times \X}(\p_{2}^{\ast}\E,\W[\m])$ is in $\Z\times \Z$, which follows from Lemma \ref{support}. We claim that $\R\p_{1\ast}(\F)$ supports in $\Z$ if $\F$ supports in $\Z\times \Z$. There is a commutative diagram, $\U=\X\setminus \Z$.\\
  \centerline{\xymatrix{U\times \X\ar[r]^{\j'}\ar[d]^{\p_{1\U}}&\X\times \X\ar[d]^{\p_{1}}\\
\U\ar[r]^{\j}&\X}}\\
According to flat base change theorem,
$\mathbb{L}\j^{\ast}\R\p_{1\ast}\F\cong \R\p_{1\U\ast}\mathbb{L}\j'^{\ast}\F\cong 0$, which means the support of $\R\p_{1\ast}\F$ is in $\Z$.
  Hence $\R\p_{1\ast}\R\mathcal{H}\o\m(\p^{\ast}_{2}\E, \W[\m])$ supports in $\Z$.

  \par

  Since $\E^{\vee}$ is a compact generator of $\D_{\Z,\qch}(\X)$,
  $$\R\p_{1\ast}\R\mathcal{H}\o\m_{\X\times \X}(\p^{\ast}_{2}\E,\W[\m+\n])=0.$$
  Take any affine open sub-scheme $\U$ of $\X$,
   we have $\Hom_{\U\times \X}(\p^{\ast}_{2}\E\mid_{\U\times \X},\W\mid_{\U\times \X}[\m+\n])\cong 0$, and then
  $\Hom_{\X}(\E,(\R\p_{2\ast}\W\mid_{\U\times \X}[\m])[\n])=0.$
 Since $\W\mid_{\U\times \X}$ supports in $(\Z\cap \U)\times \Z$ , $\R\p_{2\ast}\W\mid_{\U\times \X}[\m]$ supports in $\Z$ because of the same reason above. We obtain
  $$\R\p_{2\ast}(\W\mid_{\U\times \X}[\n])=0$$
  for any integer $\n$.
  Again taking any affine open sub-scheme of $\V$ of $\X$, we have
  $$\Gamma(\U\times \V,\W|_{\U\times \V})=0$$
  for any open affine $\U,\ \V$. Therefore, $\W\cong 0$.
 \end{proof}

 \begin{prop}
   Choose a compact generator $\E$ of $\D_{\Z,\qch}(\X)$. Let $\Lambda=\R\Hom(\E,\E)$. There is a commutative diagram\\
   \centerline {\xymatrix@R5pc@C6.5pc{\D_{\Z,\qch}(\X)\ar[r]^{Fourier- Mukai}\ar[d]^{\R\Hom(\E,\cdot)}&\D_{\Z,\qch}(\X)\ar[d]^{\R\Hom(\E,\cdot)}\\
\D(\Lambda)\ar[r]^{Furier-Mukai\quad \M}&\D(\Lambda)
}}
   The morphism in the upper row is Fourier-Mukai functor associated with $\M\otimes^{\L}_{\Lambda^{\op}\otimes \Lambda}\E^{\vee}\boxtimes \E$.
 \end{prop}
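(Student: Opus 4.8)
The plan is to transport the Fourier--Mukai functor through two derived Morita equivalences and then to compute. By Lemma~\ref{dgalge}, $\R\Hom(\E,-)\colon\D_{\Z,\qch}(\X)\to\D(\Lambda)$ is an equivalence with quasi-inverse $\N\mapsto\E\otimes^{\L}_{\Lambda}\N$. Applying the same lemma to the compact generator $\E^{\vee}\boxtimes\E$ of $\D_{\Z\times\Z,\qch}(\X\times\X)$ (from the preceding lemma) gives an equivalence $\D_{\Z\times\Z,\qch}(\X\times\X)\cong\D(\Gamma)$ with $\Gamma=\R\Hom_{\X\times\X}(\E^{\vee}\boxtimes\E,\E^{\vee}\boxtimes\E)$; since $\E$ is perfect, K\"unneth together with the fact that $(-)^{\vee}$ is a contravariant auto-equivalence of $\Perf_{\Z}(\X)$ (so $\R\Hom_{\X}(\E^{\vee},\E^{\vee})\cong\Lambda^{\op}$) identifies $\Gamma$ with $\Lambda^{\op}\otimes\Lambda$, and hence $\N\mapsto\N\otimes^{\L}_{\Lambda^{\op}\otimes\Lambda}(\E^{\vee}\boxtimes\E)$ is an equivalence $\D(\Lambda^{\op}\otimes\Lambda)\xrightarrow{\ \sim\ }\D_{\Z\times\Z,\qch}(\X\times\X)$ with quasi-inverse $\K\mapsto\R\Hom_{\X\times\X}(\E^{\vee}\boxtimes\E,\K)$. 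Thus for a $\Lambda$-bimodule $\M$ the complex $\K_{\M}:=\M\otimes^{\L}_{\Lambda^{\op}\otimes\Lambda}(\E^{\vee}\boxtimes\E)$ is supported on $\Z\times\Z$, so $\Phi_{\K_{\M}}$ sends $\D_{\Z,\qch}(\X)$ to itself (by the support argument in the proof of the preceding lemma) and is the functor in the top row; conversely, by \cite[Theorem 1.1]{canonaco_stellari_2014} every Fourier--Mukai functor preserving $\D_{\Z,\qch}(\X)$ has this shape, with $\M$ the Morita transform of its kernel.

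Next I would identify $\R\Hom(\E,-)\circ\Phi_{\K_{\M}}\circ(\E\otimes^{\L}_{\Lambda}-)$ with the bottom arrow $\N\mapsto\N\otimes^{\L}_{\Lambda}\M$. Fix $\N\in\D(\Lambda)$ and put $\F=\E\otimes^{\L}_{\Lambda}\N$; then
$$\Phi_{\K_{\M}}(\F)=\R\p_{2\ast}\big(\p_1^{\ast}\F\otimes^{\L}_{\mathcal{O}_{\X\times\X}}(\M\otimes^{\L}_{\Lambda^{\op}\otimes\Lambda}(\E^{\vee}\boxtimes\E))\big).$$
Computing $\M\otimes^{\L}_{\Lambda^{\op}\otimes\Lambda}(-)$ through a semifree resolution of $\M$ exhibits its value as built from iterated cones and coproducts of shifts; since $\otimes^{\L}_{\mathcal{O}}$, $\R\p_{2\ast}$, and $\R\Hom_{\X}(\E,-)$ are triangulated and commute with arbitrary coproducts on $\D_{\qch}$ (the last because $\E$ is compact, $\R\p_{2\ast}$ because $\X\times\X$ is noetherian), all three commute with $\M\otimes^{\L}_{\Lambda^{\op}\otimes\Lambda}(-)$. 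Pulling it outside, using $\p_1^{\ast}\F\otimes^{\L}(\E^{\vee}\boxtimes\E)\cong(\F\otimes^{\L}\E^{\vee})\boxtimes\E$, then the projection formula for $\p_2$ and flat base change (which identify $\R\p_{2\ast}\p_1^{\ast}(-)$ with $\R\Gamma(\X,-)\otimes_{\k}\mathcal{O}_{\X}$), and finally perfectness of $\E$ (so $\E^{\vee}\otimes^{\L}\F\cong\R\mathcal{H}om_{\X}(\E,\F)$), one obtains
$$\Phi_{\K_{\M}}(\F)\cong\big(\R\Hom_{\X}(\E,\F)\otimes_{\k}\E\big)\otimes^{\L}_{\Lambda^{\op}\otimes\Lambda}\M\cong(\N\otimes_{\k}\E)\otimes^{\L}_{\Lambda^{\op}\otimes\Lambda}\M,$$
the last step using $\R\Hom_{\X}(\E,\E\otimes^{\L}_{\Lambda}\N)\cong\N$ (the unit of the Morita adjunction). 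Applying $\R\Hom_{\X}(\E,-)$, commuting it past $(-)\otimes^{\L}_{\Lambda^{\op}\otimes\Lambda}\M$ as above, and using $\R\Hom_{\X}(\E,\N\otimes_{\k}\E)\cong\N\otimes_{\k}\R\Hom_{\X}(\E,\E)\cong\N\otimes_{\k}\Lambda$, the composite becomes $(\N\otimes_{\k}\Lambda)\otimes^{\L}_{\Lambda^{\op}\otimes\Lambda}\M\cong\N\otimes^{\L}_{\Lambda}\M$, the standard contraction over the enveloping algebra; carrying this through naturally in $\N$ gives the isomorphism of functors, i.e. commutativity of the square.

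The main obstacle I anticipate is not the chain of isomorphisms itself but the bookkeeping of module structures: one must verify that each intermediate object ($\E^{\vee}\boxtimes\E$, $\N\otimes_{\k}\E$, $\N\otimes_{\k}\Lambda$, and so on) carries the $\Lambda$- or $(\Lambda^{\op}\otimes\Lambda)$-action that matches the correct projection factor of $\X\times\X$, and that every isomorphism above (projection formula, flat base change, the perfectness identification, the Morita unit) respects these module structures and is natural in $\N$; an error in a left/right convention would merely swap $\N\otimes^{\L}_{\Lambda}\M$ for $\M\otimes^{\L}_{\Lambda}\N$, but it has to be pinned down. A secondary technical point, handled above via the coproduct--preservation statements, is that $\M$ need not be perfect over $\Lambda^{\op}\otimes\Lambda$, so the commutations cannot be reduced to a finite induction; one should also note that $\Phi_{\K_{\M}}$ and the Morita equivalences are enhanceable, so the displayed identifications can be read at the level of $\dg$ (quasi-)functors, which is where the diagram ultimately lives.
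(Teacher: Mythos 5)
Your proposal is correct and follows essentially the same strategy as the paper: verify the desired identification of functors when $\M$ is the free bimodule $\Lambda^{\op}\otimes\Lambda$ (a projection--formula computation), then pass to arbitrary $\M$ via a semi-free resolution, using that all the geometric functors involved are triangulated and commute with coproducts (so the transfinite filtration argument, via the homotopy colimit triangle $\oplus\phi_{\i}\to\oplus\phi_{\i}\to\M$, carries the isomorphism up). You flesh out the Morita bookkeeping and the module-structure checks more explicitly than the paper does, but the reduction is the same.
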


 \begin{proof}
 We need to prove:
$$\R\p_{2\ast}(\p^{\ast}_{1}\F\otimes^{\L} \E^{\vee}\boxtimes \E\otimes^{\L}_{\Lambda^{\op}\otimes \Lambda} \M)
\cong \R\Hom(\E,\F)\otimes^{\L}_{\Lambda^{\op}} \M\otimes^{\L}_{\Lambda}\E.$$
Firstly, we replace $\M$ with $\Lambda^{\op}\otimes \Lambda$, then by projection formula
$$\text{Left}=(\R\Hom(\E,\F)\otimes \E)\otimes^{\L}_{\Lambda^{\op}\otimes \Lambda}\Lambda^{\op}\otimes \Lambda=\text{Right}.$$
For general $\M$, we have semi-free resolution, without loss of generality, assume $\M$ semi-free.
That is, there is a filtration: $0\subset \phi_{1}\subset\phi_{2}\subset\cdots\phi_{\n}\subset\cdots\subset\M$, with quotient being direct sum (maybe infinite) of $\Lambda^{\op}\otimes \Lambda[\n]$. Then since the formula holds for $\Lambda^{\op}\otimes \Lambda[\n]$, and hypercohomology and tensor product commute with direct sum (infinite), it holds for $\phi_{\i}$. According to \cite[\href{https://stacks.math.columbia.edu/tag/09KL}{Tag 09KL}]{stacks-project}, there is a triangle
$$\oplus \phi_{\i}\rightarrow \oplus \phi_{\i}\rightarrow \M.$$
Thus, it holds for $\M$.
 \end{proof}

\begin{cor}\label{Nkernel}

 $\Lambda\otimes^{\L}_{\Lambda^{\op}\otimes \Lambda}\E^{\vee}\boxtimes \E$
  is the Fourier-Mukai kernel corresponding to identity functor of $\D_{\Z,\qch}(\X)$ and hence of $\Perf_{\Z}(\X)$. In particular, $\Lambda\otimes^{\L}_{\Lambda^{\op}\otimes \Lambda}\E^{\vee}\boxtimes \E\cong \Delta_{\ast}\R\underline{\Gamma_{\Z}}(\mathcal{O}_{\X})$ if $\Z$ has no zero dimensional components.

\end{cor}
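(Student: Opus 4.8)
The plan is to specialise the preceding Proposition to the diagonal bimodule $\M=\Lambda$. For that choice the bottom functor ``Fourier--Mukai $\Lambda$'' on $\D(\Lambda)$ is the identity endofunctor, since tensoring over $\Lambda$ with the diagonal $\Lambda$-bimodule $\Lambda$ changes nothing; concretely, the right--hand side of the identity verified in the proof of that Proposition, $\R\Hom(\E,\F)\otimes^{\L}_{\Lambda^{\op}}\Lambda\otimes^{\L}_{\Lambda}\E$, collapses to $\R\Hom(\E,\F)\otimes^{\L}_{\Lambda}\E\cong\F$. Since the vertical arrows $\R\Hom(\E,\cdot)$ of the commutative square are equivalences (Lemma \ref{dgalge}), this forces the top arrow to be $\i\d_{\D_{\Z,\qch}(\X)}$; but the same Proposition identifies the top arrow with the Fourier--Mukai functor of kernel $\Lambda\otimes^{\L}_{\Lambda^{\op}\otimes\Lambda}\E^{\vee}\boxtimes\E$. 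Hence this object is a Fourier--Mukai kernel for $\i\d$ on $\D_{\Z,\qch}(\X)$, and since the identity functor preserves compact objects it restricts to a Fourier--Mukai kernel for $\i\d$ on $\Perf_{\Z}(\X)$. That settles the first assertion, with no hypothesis on $\Z$.

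For the ``in particular'' statement I would appeal to the uniqueness half of Theorem \ref{kernelsupport}. It only remains to check that $\Lambda\otimes^{\L}_{\Lambda^{\op}\otimes\Lambda}\E^{\vee}\boxtimes\E$ lies in $\D_{\Z\times\Z,\qch}(\X\times\X)$. By the Lemma preceding the Proposition, $\E^{\vee}\boxtimes\E$ is a compact generator of $\D_{\Z\times\Z,\qch}(\X\times\X)$, so in particular an object of $\D_{\qch}(\X\times\X)$ whose cohomology sheaves are supported on $\Z\times\Z$ (Lemma \ref{support}). Computing $\Lambda\otimes^{\L}_{\Lambda^{\op}\otimes\Lambda}\E^{\vee}\boxtimes\E$ by a bar (semi--free) resolution of $\Lambda$ over $\Lambda^{\op}\otimes\Lambda$ presents it as the totalisation of a complex each term of which is of the form $\Lambda^{\otimes r}\otimes(\E^{\vee}\boxtimes\E)$, i.e.\ a (possibly infinite) direct sum of shifts of $\E^{\vee}\boxtimes\E$; since direct sums and homotopy colimits of objects supported on the closed set $\Z\times\Z$ are again supported there, we get $\Lambda\otimes^{\L}_{\Lambda^{\op}\otimes\Lambda}\E^{\vee}\boxtimes\E\in\D_{\Z\times\Z,\qch}(\X\times\X)$. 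As its Fourier--Mukai functor is $\i\d$ on $\Perf_{\Z}(\X)$ and $\Z$ has no zero--dimensional components, Theorem \ref{kernelsupport} yields $\Lambda\otimes^{\L}_{\Lambda^{\op}\otimes\Lambda}\E^{\vee}\boxtimes\E\cong\Delta_{\ast}\R\underline{\Gamma_{\Z}}(\mathcal{O}_{\X})$.

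I expect the genuine work to be that last support computation: verifying that the bar--resolution description of $\Lambda\otimes^{\L}_{\Lambda^{\op}\otimes\Lambda}\E^{\vee}\boxtimes\E$ does not leave $\D_{\Z\times\Z,\qch}(\X\times\X)$ --- which is precisely the arena in which the uniqueness clause of Theorem \ref{kernelsupport} is available --- and confirming that support is preserved under the (possibly infinite) direct sums and the homotopy colimit that occur. Everything else, namely recognising ``Fourier--Mukai $\Lambda$'' as the identity and transporting it across the commutative square of the Proposition, is formal.
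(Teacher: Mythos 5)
Your proposal follows the same route as the paper: specialize the preceding Proposition to $\M=\Lambda$, observe that the Fourier--Mukai transform with kernel $\Lambda\otimes^{\L}_{\Lambda^{\op}\otimes\Lambda}\E^{\vee}\boxtimes\E$ is the identity on $\D_{\Z,\qch}(\X)$ (the paper phrases this as a direct computation; you phrase it as transporting the identity of $\D(\Lambda)$ across the commutative square --- same content), and then invoke the uniqueness clause of Theorem~\ref{kernelsupport}. The one substantive difference is that you explicitly verify the hypothesis of Theorem~\ref{kernelsupport}, namely that the kernel lies in $\D_{\Z\times\Z,\qch}(\X\times\X)$, via the bar/semi-free resolution of $\Lambda$ over $\Lambda^{\op}\otimes\Lambda$ and the fact that $\D_{\Z\times\Z,\qch}(\X\times\X)$ is a localizing subcategory (closed under shifts, cones, arbitrary direct sums, hence homotopy colimits). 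The paper's proof simply cites Theorem~\ref{kernelsupport} without checking this support condition, so your proposal correctly identifies and fills a genuine, if small, gap in the argument; the rest is formally identical.
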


\begin{proof}

  \begin{align*}
  \R\p_{2\ast}(\p_{1}^{\ast}\F\otimes^{\L} (\p_{1}^{\ast}\E^{\vee}\otimes^{\L}_{\mathcal{O}_{\X}} \p_{2}^{\ast}\E)\otimes^{\L}_{\Lambda^{\op}\otimes \Lambda}\Lambda)
& \cong  (\R\Hom(\E,\F)\otimes \E)\otimes^{\L}_{\Lambda^{\op}\otimes \Lambda}\Lambda \\
& \cong  \R\Hom(\E,\F)\otimes^{\L}_{\Lambda} \E \\
& \cong  \F.
  \end{align*}
By Theorem \ref{kernelsupport}, $\Lambda\otimes^{\L}_{\Lambda^{\op}\otimes \Lambda}\E^{\vee}\boxtimes \E\cong \Delta_{\ast}\R\underline{\Gamma_{\Z}}(\mathcal{O}_{\X})$.

\end{proof}
 \begin{proof}[Proof of Theorem \ref{gfkernel}]
   The method is that we compute $\Perf_{\Z}(\X)$ via special $\dg$ enhancement. Choose a compact generator $\E$ of $\D_{\Z,\qch}(\X)$. According to Lemma \ref{dgalge}, $\Perf(\Lambda)\cong \Perf_{\Z}(\X)$ where $\Lambda=\R\Hom(\E,\E)$. Therefore, the $\dg$ category $\Per_{\dg}(\Lambda)$ is a $\dg$ enhancement of $\Perf_{\Z}(\X)$. Thus, $$\HH_ {\bullet}(\Perf_{\Z}(\X))\cong \HH_{\bullet}(\Per_{\dg}(\Lambda))\cong \HH_{\bullet}(\Lambda).$$
   The second isomorphism is because the natural Yoneda embedding $\Lambda\rightarrow \Per_{\dg}(\Lambda)$ is a derived Morita equivalence.

   \par

   Since $\R\Gamma(\X,\E^{\vee}\otimes^{\L} \E)\cong \R\Hom(\E, \E)\cong \Lambda$, by induction (resolution of semi-free module), we have $\M\otimes^{\L} \N\cong \R\Gamma(\X,(\M\otimes^{\L} \E)\otimes^{\L} (\E^{\vee}\otimes^{\L} \N))$ for right $\Lambda$ module $\M$ and left $\Lambda$ module $\N$.
Similarly, $\R\Gamma(\X\times \X,(\E^{\vee}\boxtimes \E)\otimes^{\L} (\E^{\vee}\boxtimes \E))\cong \Lambda^{\op}\otimes \Lambda$. Hence in general, for $\Lambda^{\op}\otimes \Lambda$-module $\M$ and $\N$, we have$\colon$
$$\M\otimes^{\L}_{\Lambda^{\op}\otimes \Lambda} \N\cong \R\Gamma(\X\times \X, (\M\otimes \E^{\vee}\boxtimes \E)\otimes (\E^{\vee}\boxtimes \E\otimes \N)).$$

In particular, $\Lambda\otimes^{\L}_{\Lambda^{\op}\otimes \Lambda} \Lambda\cong \R\Gamma(\X\times \X, (\Lambda\otimes \E^{\vee}\boxtimes E)\otimes (\E^{\vee}\boxtimes \E)\otimes \Lambda)$.

According to Corollary \ref{Nkernel}, we have
$\HH_{\bullet}(\Lambda)\cong \Hom^{\bullet}(\mathcal{O}_{\X\times \X},\Delta_{\ast}\R\underline{\Gamma_{\Z}}(\mathcal{O}_{\X})\otimes^{\L}\Delta_{\ast}\R\underline{\Gamma_{\Z}}(\mathcal{O}_{\X})[\i])$
. The remaining thing is to compute $\Hom^{\ast}(\mathcal{O}_{\X\times \X},\Delta_{\ast}\R\underline{\Gamma_{\Z}}(\mathcal{O}_{\X})\otimes^{\L}\Delta_{\ast}\R\underline{\Gamma_{\Z}}(\mathcal{O}_{\X}))$.

\begin{center}
$$\xymatrix@C50pt@R20pt{&\X\ar[d]^{\Delta}\ar[ddl]_{\i\d}\ar[ddr]^{\i\d}& \\
  &\X\times \X\ar[dl]^{\p}\ar[dr]_{\q}& \\
  \X&&\X
  }$$
\end{center}

There is a triangle$\colon$
 $$\R\underline{\Gamma_{\Z}}\mathcal{O}_{\X}\rightarrow \mathcal{O}_{\X}\rightarrow \R\j_{\ast}\j^{\ast}\mathcal{O}_{\X}\rightarrow  \R\underline{\Gamma_{\Z}}\mathcal{O}_{\X}[1].$$
 Apply functor $\Delta{\ast}$, we have:
 $$\Delta_{\ast}\R\underline{\Gamma_{\Z}}\mathcal{O}_{\X}\rightarrow \Delta_{\ast}\mathcal{O}_{\X}\rightarrow \Delta_{\ast}\R\j_{\ast}\j^{\ast}\mathcal{O}_{\X}\rightarrow  \Delta_{\ast}\R\underline{\Gamma_{\Z}}\mathcal{O}_{\X}[1].$$
 Tensor (derived) with the object $\Delta_{\ast}\R\underline{\Gamma_{\Z}}\mathcal{O}_{\X}$ we get the triangle$\colon$
 $$\Delta_{\ast}\R\underline{\Gamma_{\Z}}\mathcal{O}_{\X}\otimes^{\L} \Delta_{\ast}\R\underline{\Gamma_{\Z}}\mathcal{O}_{\X}\rightarrow \Delta_{\ast}\mathcal{O}_{\X}\otimes^{\L} \Delta_{\ast}\R\underline{\Gamma_{\Z}}\mathcal{O}_{\X}\rightarrow \Delta_{\ast}\R\j_{\ast}\j^{\ast}\mathcal{O}_{\X}\otimes^{\L} \Delta_{\ast}\R\underline{\Gamma_{\Z}}\mathcal{O}_{\X}\rightarrow  +.$$
 By projection formula for closed immersion$\colon$
 $$\Delta_{\ast}\R\j_{\ast}\j^{\ast}\mathcal{O}_{\X}\otimes^{\L} \Delta_{\ast}\R\underline{\Gamma_{\Z}}\mathcal{O}_{\X}\cong \Delta_{\ast}(\R\j_{\ast}\j^{\ast}\mathcal{O}_{\X}\otimes^{\L} \L\Delta^{\ast}\Delta_{\ast}\R\underline{\Gamma_{\Z}}\mathcal{O}_{\X}).$$
 Since for any object $\F\in \D(\X)$$\colon$
 $$(\L\Delta^{\ast}\Delta_{\ast}\mathcal{O}_{\X})\otimes^{\L} \F\cong \L\Delta^{\ast}(\Delta_{\ast}\mathcal{O}_{\X}\otimes^{\L} \p^{\ast}\F).$$
$$\Delta_{\ast}\mathcal{O}_{\X}\otimes^{\L} \p^{\ast}\F\cong \Delta_{\ast}(\mathcal{O}_{\X}\otimes^{\L} \L\Delta^{\ast}\p^{\ast}\F)\cong \Delta_{\ast}\F.$$
Therefore
 $$(\L\Delta^{\ast}\Delta_{\ast}\mathcal{O}_{\X})\otimes^{\L} \F\cong \L\Delta^{\ast}\Delta_{\ast}\F.$$
 By result \cite[Thm 4.1]{CALDARARU200534}, $\L\Delta^{\ast}\Delta_{\ast}\mathcal{O}_{\X}\cong\bigoplus_{1\leq i\leq \n} \Omega^{\i}[\i]$.
 Hence $\L\Delta^{\ast}\Delta_{\ast}\R\underline{\Gamma_{\Z}}\mathcal{O}_{\X}$ supports in $\Z$. Again by the similar reason, $$\R\j_{\ast}\j^{\ast}\mathcal{O}_{\X}\otimes^{\L} \L\Delta^{\ast}\Delta_{\ast}\R\underline{\Gamma_{\Z}}\mathcal{O}_{\X}\cong 0.$$
 Therefore :
 $$\Delta_{\ast}\R\j_{\ast}\j^{\ast}\mathcal{O}_{\X}\otimes^{\L} \Delta_{\ast}\R\underline{\Gamma_{\Z}}\mathcal{O}_{\X}\cong 0.$$
 Hence
 $$\Delta_{\ast}\R\underline{\Gamma_{\Z}}\mathcal{O}_{\X}\otimes^{\L} \Delta_{\ast}\R\underline{\Gamma_{\Z}}\mathcal{O}_{\X}\cong \Delta_{\ast}\mathcal{O}_{\X}\otimes^{\L} \Delta_{\ast}\R\underline{\Gamma_{\Z}}\mathcal{O}_{\X}.$$
 Then
 $$\Hom^{\bullet}(\X\times \X,\Delta_{\ast}\R\underline{\Gamma_{\Z}}\mathcal{O}_{\X}\otimes^{\L} \Delta_{\ast}\R\underline{\Gamma_{\Z}}\mathcal{O}_{\X})\cong \Hom^{\bullet}(\X\times \X, \Delta_{\ast}\R\underline{\Gamma_{\Z}}\mathcal{O}_{\X}\otimes^{\L} \Delta_{\ast}\mathcal{O}_{\X}).$$
According to projection formula for closed immersion again$\colon$
 $$\Delta_{\ast}\R\underline{\Gamma_{\Z}}\mathcal{O}_{\X}\otimes^{\L} \Delta_{\ast}\mathcal{O}_{\X}\cong \Delta_{\ast}(\R\underline{\Gamma_{\Z}}\mathcal{O}_{\X}\otimes^{\L} \L\Delta^{\ast}\Delta_{\ast}\mathcal{O}_{\X}).$$
Hence
$$\Hom^{\bullet}(\X\times \X,\Delta_{\ast}\R\underline{\Gamma_{\Z}}\mathcal{O}_{\X}\otimes^{\L} \Delta_{\ast}\R\underline{\Gamma_{\Z}}\mathcal{O}_{\X})\cong \Hom^{\bullet}(\X, \R\underline{\Gamma_{\Z}}\mathcal{O}_{\X}\otimes \bigoplus_{1\leq \i\leq \n} \Omega^{\i}[\i]).$$
But derived tensor $\bigoplus_{1\leq i\leq \n} \Omega^{\i}[\i]$ with the triangle$\colon$
 $$\R\underline{\Gamma_{\Z}}\mathcal{O}_{\X}\rightarrow \mathcal{O}_{\X}\rightarrow \R\j_{\ast}\j^{\ast}\mathcal{O}_{\X}\rightarrow  \R\underline{\Gamma_{\Z}}\mathcal{O}_{\X}[1].$$
We have
$$\R\underline{\Gamma_{\Z}}\mathcal{O}_{\X}\otimes^{\L} \bigoplus_{1\leq \i\leq \n} \Omega^{\i}[\i]\rightarrow \mathcal{O}_{\X}\otimes^{\L} \bigoplus_{1\leq \i\leq \n} \Omega^{\i}[\i]\rightarrow \R\j_{\ast}\j^{\ast}\mathcal{O}_{\X}\otimes^{\L} \bigoplus_{1\leq \i\leq \n} \Omega^{\i}[\i]\rightarrow  \R\underline{\Gamma_{\Z}}\mathcal{O}_{\X}\otimes^{\L} \bigoplus_{1\leq \i\leq \n} \Omega^{\i}[\i][1].$$
Since
$$\R\j_{\ast}\j^{\ast}\mathcal{O}_{\X}\otimes^{\L} \bigoplus_{1\leq \i\leq \n} \Omega^{\i}[\i]\cong \R\j_{\ast}\j^{\ast}\bigoplus_{1\leq \i\leq \n} \Omega^{\i}[\i],$$
which is compatible with the triangle, we have
$$\R\underline{\Gamma_{\Z}}\mathcal{O}_{\X}\otimes \bigoplus_{1\leq \i\leq \n} \Omega^{\i}[\i]\cong \R\underline{\Gamma_{\Z}}\bigoplus_{1\leq \i\leq \n}\Omega^{\i}[\i].$$
 Finally
 $$\Hom^{\bullet}(\X\times \X,\Delta{\ast}\R\underline{\Gamma_{\Z}}\mathcal{O}_{\X}\otimes^{\L} \Delta{\ast}\R\underline{\Gamma_{\Z}}\mathcal{O}_{\X})\cong \R\Gamma_{\Z}(\X,\bigoplus_{1\leq \i\leq \n}\Omega^{\i}[\i])\cong \bigoplus_{\t}\bigoplus_{\p-\q=\t}\H_{\Z}^{\p}(\X,\Omega_{\X}^{\q})[-\t].$$
 \end{proof}

\vspace{5mm}

\section{Applications}
In this section, we apply the Hochschild homology theory of derived categories with support to certain problems of derived categories of variety and problems of semi-orthogonal decompositions.

\subsection{weak CY categories which are not admissible}
It is interesting to find some examples of weak Calabi-Yau categories which are not admissible subcategories of $\Perf(\Y)$, here $\Y$ is assumed to be projective smooth. Note that we say weak Calabi--Yau category if the Serre functor is a shifting. We obtain a series of examples as follows.

\begin{thm}\label{CY}
  Let $\X$ be a projective smooth variety of dimension $\n$. $\Z$ is a closed subscheme of $\X$.
  \begin{enumerate}
    \item If $\X$ is a Calabi-Yau variety, then $\Perf_{\Z}(\X)$ is a weak $\n$-Calabi-Yau category, and it is not an admissible subcategory of derived category of projective smooth varieties.
    \item If $\Z$ consist of points, then $\Perf_{\Z}(\X)$ is a weak $\n$-Calabi-Yau category, and it is not an admissible subcategory of derived category of projective smooth varieties.
  \end{enumerate}
\end{thm}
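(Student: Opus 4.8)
The plan is to establish, for each part, first that $\Perf_{\Z}(\X)$ has a Serre functor isomorphic to the shift $[\n]$, and then that it is not an admissible subcategory of $\Perf(\Y)$ for any smooth projective $\Y$. One may assume at the outset that $\Z$ is a nonempty proper closed subscheme: if $\Z=\emptyset$ then $\Perf_{\Z}(\X)=0$, while if $|\Z|=\X$ then $\Perf_{\Z}(\X)=\Perf(\X)$ is admissible in itself; both boundary cases lie outside the intended statement, and in part (2) this reduction is automatic once $\dim\X\ge 1$. Since $\X$ is smooth projective, every object of $\Perf_{\Z}(\X)\subseteq\Perf(\X)=\DbX$ is a bounded complex of coherent sheaves, so all $\Hom$-spaces in $\Perf_{\Z}(\X)$ are finite dimensional and a Serre functor, if it exists, is unique. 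In part (1), $\omega_{\X}\cong\mathcal{O}_{\X}$ makes the Serre functor of $\Perf(\X)$ equal to $[\n]$; this functor manifestly preserves $\Perf_{\Z}(\X)$, hence restricts to a Serre functor there, and $\Perf_{\Z}(\X)$ is weak $\n$-CY. In part (2), $\Z=\{\x_{1},\dots,\x_{\r}\}$ is a finite set of closed points, $\Perf_{\Z}(\X)=\bigoplus_{j}\Perf_{\{\x_{j}\}}(\X)$, and each summand is controlled by the $\n$-dimensional regular local ring $\mathcal{O}_{\X,\x_{j}}$; because $\omega_{\X}$ is free near $\x_{j}$, a choice of local generator gives a natural isomorphism $(-)\otimes\omega_{\X}\cong\mathrm{id}$ on $\Perf_{\{\x_{j}\}}(\X)$, so its Serre functor is again $[\n]$ — this is Grothendieck local duality, and one may alternatively identify $\Perf_{\{\x_{j}\}}(\X)$ with $\Perf$ of the exterior algebra on $\n$ generators, a Frobenius dg algebra whose socle sits in cohomological degree $\n$. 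Either way $\Perf_{\Z}(\X)$ is weak $\n$-Calabi--Yau.

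\emph{Non-admissibility.} Suppose for contradiction that $\Perf_{\Z}(\X)$ is equivalent to an admissible subcategory $\mathcal{B}\subseteq\DbY$ with $\Y$ smooth projective. Then $\mathcal{B}$ is a semiorthogonal component of the smooth and proper category $\Perf(\Y)$, so by Orlov's theorem \cite{Orlov2016SMOOTHAP} its canonical dg enhancement — well defined by Lemma \ref{enhancedperf} — is a smooth and proper dg category. Combined with the previous paragraph, $\mathcal{B}$ is a nonzero, smooth, proper $\n$-Calabi--Yau category, and \cite[Corollary 5.4]{Kuznetsov_2019} then forces $\HH_{-\n}(\mathcal{B})\neq 0$ (concretely, $\HH^{0}\cong\HH_{-\n}$ for a smooth proper $\n$-CY category, and $\mathrm{id}\neq 0$). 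On the other hand Corollary \ref{-nHoch} gives $\HH_{-\n}(\Perf_{\Z}(\X))=0$: by Theorem \ref{geometryformula} this group equals $\H^{0}_{\Z}(\X,\Omega^{\n}_{\X})$, and a global section of the line bundle $\Omega^{\n}_{\X}=\omega_{\X}$ supported on the proper closed subset $\Z$ of the irreducible variety $\X$ must vanish. This contradiction proves both (1) and (2).

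\emph{Expected obstacle.} The step needing the most care is the passage from admissibility to a contradiction: one must confirm that being a semiorthogonal component of $\Perf(\Y)$ (with $\Y$ smooth projective) genuinely forces smoothness of the enhancement, and that the nonvanishing $\HH_{-\n}\neq 0$ requires only smoothness, properness and the $\n$-CY property — with no connectedness hypothesis — which is harmless here since Hochschild homology and the Serre functor both split along product decompositions, so the potential decomposability of $\Perf_{\Z}(\X)$ in part (2) causes no trouble. A Hochschild-free alternative, indicated in the Remark following the statement, would instead show directly that $\Perf_{\Z}(\X)$ is not a smooth dg category (for instance, in part (2) it is $\Perf$ of the exterior algebra, which has infinite global dimension) and then invoke the same theorem of Orlov; I would carry this out as an independent check.
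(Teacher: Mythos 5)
Your proof is correct and follows essentially the same approach as the paper: in both, one checks that the Serre functor of $\Perf_{\Z}(\X)$ is $[\n]$ (trivially in (1), via local triviality of $\omega_{\X}$ near $\Z$ in (2)), and then derives a contradiction by comparing the vanishing $\HH_{-\n}(\Perf_{\Z}(\X))=0$ from Corollary \ref{-nHoch} with the non-vanishing forced by \cite[Corollary 5.4]{Kuznetsov_2019} for admissible $\n$-Calabi--Yau subcategories. Your explicit decomposition $\Perf_{\Z}(\X)=\bigoplus_{j}\Perf_{\{\x_{j}\}}(\X)$ in part (2) and the parenthetical exterior-algebra/Orlov alternative are minor elaborations of the same argument (the latter is also noted in the paper's remark following the theorem).
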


\begin{proof}
 the Serre functor of $\Perf_{\Z}(\X)$ is $\otimes \omega_{\X}[\n]$. For (1), since $\omega_{\X}\cong \mathcal{O}_{\X}$ , therefore the Serre functor of $\Perf_{\Z}(\X)$ is [$\n$]. For (2), since $\omega_{\X}$ is locally trivial around the points $\Z$, $\E\otimes \omega_{\X}\cong \E$ for any object $\E\in \Perf_{\Z}(\X)$. Thus, the Serre functor of $\Perf_{\Z}(\X)$ is [$\n$]. For both (1) and (2), we have that $\Perf_{\Z}(\X)$ admits Serre functor [$\n$]. To prove that $\Perf_{\Z}(\X)$ are not admissible subcategory of some derived category of projective smooth varieties, we need a lemma.

 \begin{lem}\label{Kuznetsovsod} (\cite[Corollary 5.4]{Kuznetsov_2019})
 Let $\mathcal{B}$ be an admissible subcategory of some derived category of a projective smooth vareity. Suppose it is a $\m$-Calabi--Yau category, then $\HH_{-\m}(\mathcal{B})\neq 0$.
 \end{lem}

 Back to the proof. Suppose $\Perf_{\Z}(\X)\cong\B$ is an admissible subcategory of $\Perf(\Y)$ where $\Y$ is a smooth projective variety. Since the
 Hochschild homology of $\B$ in paper \cite{Kuznetsov_2019} is compatible with some special $\dg$ enhancement of $\B:$ Choose a strong compact generator $\F$ of $\B$, let $\A1=\R\Hom(\F,\F)$ which is a $\dg$ algebra. Then, $\HH_{\bullet}(\A1)\cong \HH_{\bullet}(\B)$. Thus, $\HH_{-\n}(\Perf_{\Z}(\X))\cong \HH_{-\n}(\A1)\cong \HH_{-\n}(\B)\neq 0$.
 But according to Corollary \ref{-nHoch}, $\HH_{-\n}(\Perf_{\Z}(\X))=0$, a contradiction.
\end{proof}

\subsection{Applications to semi-orthogonal decomposition}
Let $\X$ be a projective smooth variety. It an interesting question that whether $\Perf(\X)$ have no nontrivial semi-orthogonal decompositions.
Kotaro Kawatani and Shinnosuke Okawa constructed an obstruction for the existence of nontrivial semi-orthogonal decompositions

\begin{prop}\label{sodsupport}(\cite[ Kotaro Kawatani and Shinnosuke Okawa, Thm 3.1]{article3})
  Let $\X$ be a proper smooth variety, suppose there is a nontrivial semi-orthogonal decomposition, $\Perf(\X)=\langle\mathcal{A},\mathcal{B}\rangle$. Then only one of the following cases
happen$\colon$\\
(1) The support of object in $\mathcal{A}$ is contained in $\Z=\2B\s\vert\omega_{\X}\vert$.\\
(2) The support of object in $\mathcal{B}$ is contained in $\Z=\2B\s\vert\omega_{\X}\vert$.\\
Furthermore, if (1) (or (2)) holds, then for $\x\in \X\setminus \Z$, $\k(\x)\in$ $\mathcal{B}$ (or $\mathcal{A}$).
\end{prop}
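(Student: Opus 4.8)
The plan is to recast the statement in terms of the closed subsets
\[
\Z_{\mathcal{A}}:=\bigcup_{\a\in\mathcal{A}}\mathsf{Supp}(\a),\qquad \Z_{\mathcal{B}}:=\bigcup_{\b\in\mathcal{B}}\mathsf{Supp}(\b),
\]
and to reduce the whole proposition to the single inclusion $\Z_{\mathcal{A}}\cap\Z_{\mathcal{B}}\subseteq\Z$. First I would fix a classical generator $\G$ of $\Perf(\X)$ (for instance a finite direct sum of twists by an ample sequence) and put $\G_{\mathcal{A}}=\pi_{\mathcal{A}}(\G)$, $\G_{\mathcal{B}}=\pi_{\mathcal{B}}(\G)$, where $\pi_{\mathcal{A}},\pi_{\mathcal{B}}$ are the projection functors of the decomposition. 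Since these functors are exact and essentially surjective, $\G_{\mathcal{A}}$ and $\G_{\mathcal{B}}$ classically generate $\mathcal{A}$ and $\mathcal{B}$, so $\Z_{\mathcal{A}}=\mathsf{Supp}(\G_{\mathcal{A}})$ and $\Z_{\mathcal{B}}=\mathsf{Supp}(\G_{\mathcal{B}})$ are closed. From the triangle $\G_{\mathcal{B}}\to\G\to\G_{\mathcal{A}}\to\G_{\mathcal{B}}[1]$ and the fact that $\mathsf{Supp}(\G)=\X$ (a classical generator is supported on no proper closed subset) one gets $\X=\Z_{\mathcal{A}}\cup\Z_{\mathcal{B}}$; as $\X$ is smooth and connected, hence irreducible, one of $\Z_{\mathcal{A}},\Z_{\mathcal{B}}$ equals $\X$. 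Next, for a closed point $\x$, Lemma \ref{support} together with the adjunction $\Hom_{\X}(\G_{\mathcal{A}},\k(\x)[\bullet])\cong\Hom_{\k(\x)}(\mathbb{L}\l_{\x}^{\ast}\G_{\mathcal{A}},\k(\x)[\bullet])$ yields $\k(\x)\in\mathcal{B}\iff\x\notin\Z_{\mathcal{A}}$; adding Serre duality and the identity $\S_{\X}\k(\x)=\k(\x)\otimes\omega_{\X}[\n]\cong\k(\x)[\n]$ (valid because $\omega_{\X}$ is invertible) one likewise gets $\k(\x)\in\mathcal{A}\iff\x\notin\Z_{\mathcal{B}}$.

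Granting the inclusion $\Z_{\mathcal{A}}\cap\Z_{\mathcal{B}}\subseteq\Z$, the proposition is then formal: if $\Z_{\mathcal{A}}=\X$ then $\Z_{\mathcal{B}}=\Z_{\mathcal{A}}\cap\Z_{\mathcal{B}}\subseteq\Z$, which is case (2), while $\Z_{\mathcal{B}}=\X$ gives case (1); both cannot hold when $\Z\subsetneq\X$ since $\X=\Z_{\mathcal{A}}\cup\Z_{\mathcal{B}}$; and the statement about skyscraper sheaves is precisely the two equivalences above (for example, case (1) means $\Z_{\mathcal{A}}\subseteq\Z$, so $\x\in\X\setminus\Z$ forces $\x\notin\Z_{\mathcal{A}}$, i.e. $\k(\x)\in\mathcal{B}$).

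The core of the argument is therefore: for a fixed closed point $\x\notin\Z$, show that $\k(\x)$ lies entirely in $\mathcal{A}$ or entirely in $\mathcal{B}$. Here I would look at the decomposition triangle $\beta_{\x}\to\k(\x)\to\alpha_{\x}\xrightarrow{\delta_{\x}}\beta_{\x}[1]$ with $\alpha_{\x}=\pi_{\mathcal{A}}\k(\x)\in\mathcal{A}$ and $\beta_{\x}=\pi_{\mathcal{B}}\k(\x)\in\mathcal{B}$. Since $\mathsf{End}(\k(\x))=\k$ is local, $\k(\x)$ is indecomposable, so it suffices to show the triangle splits, i.e. $\delta_{\x}=0$ in $\Hom(\alpha_{\x},\beta_{\x}[1])$; then $\k(\x)\cong\alpha_{\x}\oplus\beta_{\x}$ forces $\alpha_{\x}=0$ or $\beta_{\x}=0$. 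To get at $\delta_{\x}$ I would choose a section $\s\in\H^{0}(\X,\omega_{\X})$ with $\s(\x)\neq0$ and let $\W\in\vert\omega_{\X}\vert$ be its divisor of zeros, so $\x\notin\W$. Serre duality identifies $\Hom(\alpha_{\x},\beta_{\x}[1])$ with $\Hom(\beta_{\x},\alpha_{\x}\otimes\omega_{\X}[\n-1])^{\ast}$, and the triangle $\alpha_{\x}\xrightarrow{\cdot\s}\alpha_{\x}\otimes\omega_{\X}\to\Cone(\cdot\s)\to\alpha_{\x}[1]$ together with semiorthogonality $\Hom(\mathcal{B},\mathcal{A})=0$ (which annihilates $\Hom(\beta_{\x},\alpha_{\x}[\n-1])$ and $\Hom(\beta_{\x},\alpha_{\x}[\n])$) identifies this further with $\Hom(\beta_{\x},\Cone(\cdot\s)[\n-1])^{\ast}$, where $\Cone(\cdot\s)$ is supported on $\W$.

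The hard part will be to conclude $\delta_{\x}=0$ from $\x\notin\W$; this is not formal, because the Serre-duality identification above is global rather than local at $\x$. Concretely, what remains is to show that $\mathsf{Supp}(\alpha_{\x})\cap\mathsf{Supp}(\beta_{\x})$ misses $\X\setminus\Z$, equivalently that $\mathsf{Supp}(\pi_{\mathcal{B}}\k(\x))\subseteq\{\x\}$ whenever $\x\notin\Z$. (Away from $\x$ one already has $\mathsf{Supp}(\alpha_{\x})=\mathsf{Supp}(\beta_{\x})$, from the triangle and $\Hom(\k(\x),\k(\y)[\bullet])=0$ for $\y\neq\x$.) Once this support bound is available, $\Cone(\cdot\s)$ is supported on $\mathsf{Supp}(\alpha_{\x})\cap\W\subseteq\{\x\}\cap\W=\emptyset$, hence is zero, so $\Hom(\beta_{\x},\Cone(\cdot\s)[\n-1])=0$ and $\delta_{\x}=0$. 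To prove the support bound I would realize $\pi_{\mathcal{B}}$ as a Fourier--Mukai functor on $\X\times\X$ whose kernel $\K_{\mathcal{B}}$ fits into a triangle with $\mathcal{O}_{\Delta}$ and satisfies $\pi_{\mathcal{B}}\k(\x)\cong\K_{\mathcal{B}}\vert_{\{\x\}\times\X}$, and control $\K_{\mathcal{B}}$ along the diagonal using Serre duality on $\X\times\X$ and the section $\s$, in the spirit of the kernel computations of Section 3; alternatively one can play the two decompositions $\langle\mathcal{A},\mathcal{B}\rangle$ and $\langle\S_{\X}\mathcal{B},\mathcal{A}\rangle$ against each other via mutations. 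I expect this support bound to be the only genuinely non-formal ingredient of the whole argument.
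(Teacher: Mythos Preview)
The paper does not prove this proposition at all: it is quoted from Kawatani--Okawa \cite[Thm~3.1]{article3} and used throughout as a black box, so there is no in-paper argument for you to be compared against.

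That said, your outline deserves an assessment on its own terms. The reduction you set up is correct and is indeed how the Kawatani--Okawa proof is organised: realising $\Z_{\mathcal{A}}$ and $\Z_{\mathcal{B}}$ as supports of the projected pieces of a classical generator, noting $\X=\Z_{\mathcal{A}}\cup\Z_{\mathcal{B}}$ so that irreducibility forces one of them to equal $\X$, and reducing the whole proposition to the inclusion $\Z_{\mathcal{A}}\cap\Z_{\mathcal{B}}\subseteq\Z$, i.e.\ to the claim that for each closed point $\x\notin\Z$ the skyscraper $\k(\x)$ lies in $\mathcal{A}$ or in $\mathcal{B}$. Your two equivalences $\k(\x)\in\mathcal{B}\Leftrightarrow\x\notin\Z_{\mathcal{A}}$ and $\k(\x)\in\mathcal{A}\Leftrightarrow\x\notin\Z_{\mathcal{B}}$ are also correct (the second uses Serre duality exactly as you say).

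The genuine gap is the one you yourself flag. Your Serre-duality manipulation is valid as far as it goes --- one does get $\Hom(\alpha_{\x},\beta_{\x}[1])\cong\Hom(\beta_{\x},\alpha_{\x}\otimes\omega_{\X}\vert_{\W}[\n-1])^{\ast}$ --- but to conclude $\delta_{\x}=0$ from it you would need $\mathsf{Supp}(\alpha_{\x})\cap\W=\varnothing$, and you propose to obtain this from the support bound $\mathsf{Supp}(\alpha_{\x})\subseteq\{\x\}$. That bound, however, is essentially equivalent to the splitting you want, so the argument as written is circular; and the two escape routes you sketch (controlling the Fourier--Mukai kernel $\K_{\mathcal{B}}$ along the diagonal, or playing $\langle\mathcal{A},\mathcal{B}\rangle$ against its Serre mutation $\langle\S_{\X}\mathcal{B},\mathcal{A}\rangle$) are left as programmes rather than proofs. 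In short, your closing sentence is an accurate self-assessment: you have isolated the single non-formal ingredient of the Kawatani--Okawa theorem but have not supplied it.
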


\begin{rem}
  This fact is generalized to para-canonical base locus $\Z=\PBs\vert\omega_{\X}\vert$ \cite{lin2021nonexistence}.
\end{rem}

It is natural to ask which component in Proposition \ref{sodsupport} support in the base locus of $\omega_{\X}$. We provide a criteria via Hochshcild homology.

\begin{thm}\label{A}
 \cite[Lemma 5.3]{pirozhkov2020admissible}Let $\X$ be a projective smooth variety of dimension $\n$. Suppose there is a nontrivial semi-orthogonal decomposition $\Perf(\X)=\langle\mathcal{A},\mathcal{B}\rangle$ with $\HH_{-\n}(\mathcal{B})\neq 0$. Then the support of any object in $\mathcal{A}$ is contained in $\Z=\2B\s\vert\omega_{\X}\vert$. The skyscraper sheaves $\k(\x)$ with $\x\in \X\setminus \Z$ belongs to $\mathcal{B}$. Furthermore, $\HH_{-\n}(\mathcal{A})= 0$, $\HH_{-\n}(\Perf(\X))\cong \HH_{-\n}(\mathcal{B})$. If $\mathcal{B}$ is a $\n$ Calabi-Yau category, then it is indecomposable.
\end{thm}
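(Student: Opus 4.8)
The plan is to combine Proposition~\ref{sodsupport} with the vanishing result of Corollary~\ref{-nHoch} and the additivity of Hochschild homology for semi-orthogonal decompositions. First I would invoke Proposition~\ref{sodsupport}: one of the two components, say $\mathcal{A}'\in\{\mathcal{A},\mathcal{B}\}$, has all its objects supported in $\Z=\mathsf{Bs}\vert\omega_{\X}\vert$; I claim this component must be $\mathcal{A}$. To see this, suppose instead that every object of $\mathcal{B}$ is supported in $\Z$. Then $\mathcal{B}\subseteq\Perf_{\Z}(\X)$, and in fact $\mathcal{B}$ is an admissible subcategory of $\Perf_{\Z}(\X)$ (the right adjoint of the inclusion $\mathcal{B}\hookrightarrow\Perf(\X)$ restricts appropriately since the projection of an object supported in $\Z$ stays supported in $\Z$). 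Since $\Perf_{\Z}(\X)$ is uniquely enhanced by Lemma~\ref{enhancedperf}, the admissible embedding $\mathcal{B}\hookrightarrow\Perf_{\Z}(\X)$ lifts to a localizing pair, and by additivity of Hochschild homology for semi-orthogonal decompositions of $\dg$ categories there is a splitting, so $\HH_{\bullet}(\mathcal{B})$ is a direct summand of $\HH_{\bullet}(\Perf_{\Z}(\X))$. In particular $\HH_{-\n}(\mathcal{B})$ is a summand of $\HH_{-\n}(\Perf_{\Z}(\X))$, which vanishes by Corollary~\ref{-nHoch} because $\Z$ is a proper closed subset (properness follows from nontriviality of the decomposition together with Proposition~\ref{sodsupport}, since $\Perf(\X)$ is indecomposable when $\Z=\X$ is impossible --- more precisely, if $\Z=\X$ the support condition is vacuous and one argues the decomposition is trivial). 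This contradicts $\HH_{-\n}(\mathcal{B})\neq0$. Hence it is $\mathcal{A}$ whose objects are all supported in $\Z$, and then the last clause of Proposition~\ref{sodsupport} gives $\k(\x)\in\mathcal{B}$ for $\x\in\X\setminus\Z$.

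Next, for the statement $\HH_{-\n}(\mathcal{A})=0$: exactly the same argument as above applies, since now $\mathcal{A}\hookrightarrow\Perf_{\Z}(\X)$ is admissible, so $\HH_{-\n}(\mathcal{A})$ is a direct summand of $\HH_{-\n}(\Perf_{\Z}(\X))=0$. For $\HH_{-\n}(\Perf(\X))\cong\HH_{-\n}(\mathcal{B})$: by Kuznetsov's additivity (or the $\dg$-categorical version via the uniquely enhanced structure of $\Perf(\X)$ from Lemma~\ref{enhancedperf}), $\HH_{\bullet}(\Perf(\X))\cong\HH_{\bullet}(\mathcal{A})\oplus\HH_{\bullet}(\mathcal{B})$, and taking the $-\n$ component and using $\HH_{-\n}(\mathcal{A})=0$ gives the isomorphism.

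Finally, for the indecomposability of $\mathcal{B}$ when it is an $\n$-Calabi--Yau category: suppose $\mathcal{B}=\langle\mathcal{B}_{1},\mathcal{B}_{2}\rangle$ were a nontrivial decomposition. A component of an $\n$-Calabi--Yau category admits the shift $[\n]$ as Serre functor, hence is itself $\n$-Calabi--Yau; but any nontrivial semi-orthogonal decomposition of an $\n$-Calabi--Yau category must be trivial because the Serre functor, being $[\n]$, preserves each component and the semi-orthogonality $\Hom(\mathcal{B}_{2},\mathcal{B}_{1})=0$ together with Serre duality forces $\Hom(\mathcal{B}_{1},\mathcal{B}_{2})=\Hom(\mathcal{B}_{2},\mathcal{B}_{1}[\n])^{\vee}=0$, so the decomposition is orthogonal; an orthogonal decomposition of an indecomposable-by-connectedness-type category --- more carefully, one checks that a component $\mathcal{B}_{i}$ supported away from $\Z$ would again contradict $\HH_{-\n}(\Perf_{\Z}(\X))=0$ applied to whichever piece sits inside $\Perf_{\Z}(\X)$. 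I would streamline this last point by simply citing that an $\n$-CY category admits no nontrivial semi-orthogonal decomposition (this is standard: both pieces would be $\n$-CY and the decomposition orthogonal, contradicting that the Hochschild homology in degree $-\n$ of an orthogonal direct sum would have dimension $\geq2$ while... ) --- in any case, the cleanest route is: $\HH_{-\n}$ is additive, each component of an $\n$-CY admissible category has nonzero $\HH_{-\n}$ by Lemma~\ref{Kuznetsovsod}, yet $\HH_{-\n}(\mathcal{B})\cong\HH_{-\n}(\Perf(\X))\cong\HH_{-\n}(\Perf_{\Z}(\X)\text{-complement})$...

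The main obstacle I anticipate is the bookkeeping around \emph{localizing the semi-orthogonal decomposition of $\Perf(\X)$ to one of $\Perf_{\Z}(\X)$} and making precise that the $\dg$-enhanced additivity applies: one must check that the admissible subcategory $\mathcal{A}$ (resp. $\mathcal{B}$) of $\Perf(\X)$, which lands in $\Perf_{\Z}(\X)$, is still admissible \emph{in} $\Perf_{\Z}(\X)$ and that this admissibility is realized by a localizing pair of $\dg$ categories so that Keller's triangle applies --- this uses the uniqueness of enhancements (Lemma~\ref{enhancedperf}) in an essential way. The secondary subtlety is confirming that $\Z$ is a \emph{proper} closed subset under the hypothesis $\HH_{-\n}(\mathcal{B})\neq0$ together with nontriviality, which I would argue by contradiction: if $\Z=\X$ then the support condition in Proposition~\ref{sodsupport} is automatically satisfied and provides no obstruction, but a direct argument (e.g. that $\k(\x)$ must lie in one component for all $\x$ and then a generation argument forces one component to be zero) shows the decomposition is trivial, contradicting the hypothesis.
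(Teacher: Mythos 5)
Your route to the first two assertions (support of $\mathcal{A}$ in $\Z$, hence $\k(\x)\in\mathcal{B}$, and the vanishing $\HH_{-\n}(\mathcal{A})=0$) is essentially the paper's: localize the decomposition to $\Perf_{\Z}(\X)$, use uniqueness of enhancements and dg-level additivity of Hochschild homology (Theorem~\ref{soddg}), and derive a contradiction from Corollary~\ref{-nHoch}. Two parts of your write-up have genuine gaps, however.

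First, properness of $\Z$. You try to argue that $\Z=\X$ would force the decomposition to be trivial, but the sketch you give (``$\k(\x)$ must lie in one component \dots a generation argument forces one component to be zero'') is circular in the form stated, since Proposition~\ref{sodsupport} is vacuous when $\Z=\X$ and does not by itself determine which component a given $\k(\x)$ lies in. The paper's route is cleaner and uses the hypothesis $\HH_{-\n}(\mathcal{B})\neq 0$ directly: by additivity, $\HH_{-\n}(\Perf(\X))\neq 0$, and by HKR $\HH_{-\n}(\Perf(\X))\cong\H^{0}(\X,\omega_{\X})$, so $\omega_{\X}$ has a nonzero global section and $\Z=\2B\s\vert\omega_{\X}\vert$ is automatically a proper closed subset. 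Without this step the localization to $\Perf_{\Z}(\X)$ has no teeth, so it cannot be left as ``a secondary subtlety.''

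Second, indecomposability. Your opening claim --- that an $\n$-Calabi--Yau category admits no nontrivial semi-orthogonal decomposition --- is false as stated: such a decomposition is forced to be \emph{orthogonal} by the Serre-duality argument you give, but orthogonal decompositions do occur (disconnected Calabi--Yau varieties). You then fall back on a dimension count ($\dim\HH_{-\n}\geq 2$), but this does not contradict anything, since $\dim\H^{0}(\X,\omega_{\X})$ can certainly be $\geq 2$ --- indeed that is exactly the situation studied in Theorem~\ref{B}. The paper's actual argument is sharper: from $\mathcal{B}=\langle\mathcal{B}_{1},\mathcal{B}_{2}\rangle$ form the three-component decomposition $\Perf(\X)=\langle\mathcal{A},\mathcal{B}_{1},\mathcal{B}_{2}\rangle$. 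Applying the already-established part of the theorem to $\langle\langle\mathcal{A},\mathcal{B}_{1}\rangle,\mathcal{B}_{2}\rangle$ with $\HH_{-\n}(\mathcal{B}_{2})\neq 0$ (Lemma~\ref{Kuznetsovsod}) gives $\k(\x)\in\mathcal{B}_{2}$; applying the role-swapped version with $\HH_{-\n}(\langle\mathcal{A},\mathcal{B}_{1}\rangle)\supseteq\HH_{-\n}(\mathcal{B}_{1})\neq 0$ gives $\k(\x)\in\langle\mathcal{A},\mathcal{B}_{1}\rangle$. Since these two subcategories intersect trivially, $\k(\x)=0$, a contradiction. You would need to add this step to close your proof.
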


\begin{rem}
  $\mathcal{A}$ should be regarded as the smaller piece in the components of the semi-orthogonal decomposition.
\end{rem}

Before proving Theorem \ref{A}, we need some preparations that Hochschild homology is additive with respect to semi-orthogonal decompositions. Again, it is well known for experts, we provide proof here for completion.

\begin{thm}\label{soddg}
  Let $\D_{\dg}$ be a small $\dg$ category with homotopic category $[\D_{\dg}]=\D$. Suppose there is a semi-orthogonal decomposition of $\D:=\langle \1A,\B\rangle$. Let $\1A_{\dg}$ and $\B_{\dg}$ be the full sub $\dg$ category of $\D_{\dg}$ such that objects belong to $\1A$ and $\B$ respectively. Then, $$\HH_{\bullet}(\D_{\dg})\cong \HH_{\bullet}(\1A_{\dg})\oplus \HH_{\bullet}(\B_{\dg}).$$
\end{thm}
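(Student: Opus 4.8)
The plan is to prove additivity of Hochschild homology for a semi-orthogonal decomposition $\D=\langle\1A,\B\rangle$ at the level of $\dg$ categories by building a suitable localization pair and invoking Keller's long exact sequence (the same tool used in Lemma \ref{Homologydgpair}). The key point is that $\B_{\dg}$ sits inside $\D_{\dg}$ as the $\dg$ subcategory on objects of $\B$, and $\1A$ is the Verdier quotient $[\D]/[\B]$; on the other hand, $\1A$ is \emph{also} realized as a full triangulated subcategory of $\D$ (since the SOD is admissible, the inclusion $\1A\hookrightarrow\D$ has adjoints, i.e. $\1A={}^{\perp}\B$). This means $\HH_{\bullet}(\1A_{\dg})$ can be computed in two ways, and comparing them will give the splitting.

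First I would set up the localization pair $\langle\B_{\dg},\D_{\dg}\rangle$ (after passing to exact/strong pre-triangulated $\dg$ envelopes as in the conventions before Proposition \ref{dgpairsX}, so that Drinfeld quotients behave well). By Keller \cite[Thm 4(c)]{KELLER19991} there is a triangle in the mixed derived category
$$\C(\B_{\dg})\rightarrow \C(\D_{\dg})\rightarrow \C(\D_{\dg}/\B_{\dg})\rightarrow \C(\B_{\dg})[1],$$
and since $[\D_{\dg}/\B_{\dg}]\cong [\D]/[\B]\cong\1A$, and $\1A$ is uniquely enhanced (its enhancement $\1A_{\dg}$, as a full $\dg$ subcategory of $\D_{\dg}$, and the quotient enhancement are both pre-triangulated enhancements of $\1A$, so they agree in $\operatorname{Ho(dg-cat)}$ by Lemma \ref{enhancedperf}-style uniqueness — here one should note $\1A$ is admissible in $\Perf(\X)$-type categories, or cite \cite{2010JAMS...23..853L}), we get $\C(\D_{\dg}/\B_{\dg})\cong\C(\1A_{\dg})$. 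Hence the triangle reads $\C(\B_{\dg})\rightarrow\C(\D_{\dg})\rightarrow\C(\1A_{\dg})\rightarrow\C(\B_{\dg})[1]$.

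Second, I would show this triangle splits. The inclusion $\1A={}^{\perp}\B\hookrightarrow\D$ is a $\dg$ functor $\1A_{\dg}\hookrightarrow\D_{\dg}$ whose composite with the quotient $\D_{\dg}\to\D_{\dg}/\B_{\dg}\simeq\1A_{\dg}$ is (quasi-isomorphic to) the identity, because the projection functor $\D\to\1A$ restricted to the subcategory $\1A$ is the identity. Therefore the map $\C(\D_{\dg})\to\C(\1A_{\dg})$ admits a section $\C(\1A_{\dg})\to\C(\D_{\dg})$ in the mixed derived category, so the triangle splits and $\C(\D_{\dg})\cong\C(\1A_{\dg})\oplus\C(\B_{\dg})$. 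Taking homology gives the claim $\HH_{\bullet}(\D_{\dg})\cong\HH_{\bullet}(\1A_{\dg})\oplus\HH_{\bullet}(\B_{\dg})$.

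The main obstacle I expect is the bookkeeping around enhancements and functoriality: one must be careful that the full $\dg$ subcategories $\1A_{\dg},\B_{\dg}\subset\D_{\dg}$ are genuinely pre-triangulated (closed under the shifts and cones that exist in $\D$), that the Drinfeld quotient $\D_{\dg}/\B_{\dg}$ is pre-triangulated with homotopy category $\1A$, and that the identification $\C(\D_{\dg}/\B_{\dg})\cong\C(\1A_{\dg})$ really is induced by a chain of quasi-equivalences compatible with the maps in the triangle (so that the splitting map is honestly a section, not just an abstract isomorphism). This is where one leans on the uniqueness-of-enhancement results \cite{2010JAMS...23..853L}, \cite{Antieau2018OnTU} and Drinfeld \cite{D}; once those are in place the argument is formal. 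An alternative, perhaps cleaner, route is to compute everything through $\dg$ algebras: pick a compact generator $G=G_{\1A}\oplus G_{\B}$ of $\D$ adapted to the SOD, so that $\Lambda=\R\Hom(G,G)$ is upper-triangular with diagonal blocks $\R\Hom(G_{\1A},G_{\1A})$ and $\R\Hom(G_{\B},G_{\B})$ and a single off-diagonal block (the $\Hom$ from $\B$ to $\1A$ vanishes by semi-orthogonality), and then invoke the standard fact that Hochschild homology of a triangular $\dg$ algebra with this block structure splits as the direct sum of the Hochschild homologies of the diagonal blocks — but this again ultimately reduces to Keller's excision, so I would present the excision-based proof as the primary one.
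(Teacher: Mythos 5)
Your argument is correct, but it follows a genuinely different route from the paper. The paper (following Tabuada's book) builds the explicit upper-triangular \emph{gluing} dg category $\T(\1A_{\dg},\B_{\dg};\M)$, where $\M$ is the $\B_{\dg}^{\op}\otimes\1A_{\dg}$-bimodule $(\b,\a)\mapsto\D_{\dg}(\a,\b)$, shows $\T\to\D_{\dg}$ is a derived Morita equivalence, and then exhibits the splitting by writing down the diagram of inclusions $\i_1,\i_2$ and projections $\R,\L$ for $\T$ and comparing Keller's triangle to the trivially split one. Your primary route instead applies Keller's localization theorem directly to the pair $\langle\B_{\dg},\D_{\dg}\rangle$ and splits the triangle using the section coming from $\1A_{\dg}\hookrightarrow\D_{\dg}$. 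The two are morally equivalent (the gluing construction is exactly what makes your section manifest without any quotient bookkeeping), and the ``alternative'' triangular dg-algebra route you sketch at the end is in fact essentially what the paper does.

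One small flag on your primary route: you should not invoke uniqueness of enhancement to identify $\C(\D_{\dg}/\B_{\dg})$ with $\C(\1A_{\dg})$. Theorem \ref{soddg} is stated for an \emph{arbitrary} small pre-triangulated dg category $\D_{\dg}$, so there is no general uniqueness-of-enhancement result available for $[\D_{\dg}]/[\B_{\dg}]\cong\1A$, and appealing to ``Lemma \ref{enhancedperf}-style'' uniqueness is out of scope. The fix is direct and avoids the issue entirely: the composite dg functor $\1A_{\dg}\hookrightarrow\D_{\dg}\to\D_{\dg}/\B_{\dg}$ induces on homotopy categories the composite $\1A\hookrightarrow\D\to\D/\B$, which is an equivalence because the SOD presents $\1A$ as ${}^{\perp}\B$ and simultaneously as the Verdier quotient $\D/\B$. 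Since both $\1A_{\dg}$ and $\D_{\dg}/\B_{\dg}$ are pre-triangulated and the induced functor of homotopy categories is an equivalence, that composite dg functor is itself a quasi-equivalence; this is exactly the statement you need, it gives the identification $\C(\1A_{\dg})\cong\C(\D_{\dg}/\B_{\dg})$ compatibly with the maps in the triangle, and it also automatically supplies the section $\C(\1A_{\dg})\to\C(\D_{\dg})$ of the map $\C(\D_{\dg})\to\C(\D_{\dg}/\B_{\dg})$, so the splitting follows. With that replacement your proof is complete and arguably more self-contained than the paper's, at the cost of handling the Drinfeld quotient carefully rather than building the gluing category.
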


\begin{proof}
  A part of proof here is taken from G.\ Tabuada's book \cite[Proposition 2.2]{NM}. Consider the $\dg$ bi-moudle
    $$\xymatrix{\M\colon \B_{\dg}^{op}\otimes \1A_{\dg}\rightarrow \C_{\dg}(\k)& (\b,\a)\mapsto \D_{\dg}(\X)(\a,\b)}$$
Define the gluing of $\dg$ categories with respect to $\M$ as
    $$\T(\1A_{\dg},\B_{\dg};\M)(\x,\y)=
    \begin{cases}
      \1A_{\dg}(\x,\y), & \mbox{if } \x,\ \y \in \1A_{\dg} \\
      \B_{\dg}(\x,\y), & \mbox{if } \x,\ \y  \in \B_{\dg}\\
      \M(\x,\y), & \mbox{if } \x \in \1A_{\dg},\ \y\in \B_{\dg} \\
      0, & \mbox{otherwise},\ \x\in \B_{\dg},\ \y\in \1A_{\dg}.
    \end{cases}$$
    The natural morphism of $\dg$ categories
  $$\T\rightarrow\D_{\dg}$$
  is a derived Morita equivalence. We have natural isomorphism of Hochschild homology
  $$\HH_{\bullet}(\T)\cong \HH_{\bullet}(\D_{\dg}).$$
  There is a diagram of $\dg$ categories

  \begin{center}
  $$\xymatrix{\B_{\dg}\ar[r]_{\i_{2}} &\T\ar[r]_{\L}\ar@/_/[l]_{\R}&\1A_{\dg}\ar@/_/[l]_{\i_{1}}
}.$$
  \end{center}

   It induces a commutative diagram of triangles of Hochschild complex \\
  \centerline{\xymatrix{\C(\B_{\dg})\ar[r]^{\i_{2}}\ar[d]^{\i\d}&\C(\T)\ar[r]^{\L}\ar[d]^{\R+\L}&\C(\1A_{\dg})\ar[d]^{\i\d}\ar[r]&\C(\B_{\dg})[1]\ar[d]\\
\C(\B_{\dg})\ar[r]&\C(\B_{\dg})\oplus \C(\1A_{\dg})\ar[r]&\C(\1A_{\dg})\ar[r]&\C(\B_{\dg})[1])
}}
It induces an isomorphism of Hochschild complex $\C(\T)\cong \C(\1A_{\dg})\oplus\C(\B_{\dg})$. Thus $$\HH_{\bullet}(\D_{\dg})\cong \HH_{\bullet}(\T)\cong \HH_{\bullet}(\1A_{\dg})\oplus \HH_{\bullet}(\B_{\dg}).$$
\end{proof}

 \begin{proof}[Proof of Theorem \ref{A}]
 By $\H\K\R$ and the additive theory of Hochschild homology Theorem \cite{K}, $\H^{0}(\X,\omega_{\X})\neq 0$.
 Hence if the nontrivial semi-orthogonal decomposition exists, then the canonical bundle is not trivial,
 and the base locus of the canonical bundle is a proper closed subset of $\X$.

 \par

 We write $\Per_{\dg}(\X)$ as the natural $\dg$ enhancement of $\Perf(\X)$. The objects are $\K$-injective perfect complexes. Define $\Per_{\Z,\dg}(\X)$ as the full $\dg$ subcategory of $\Per_{\dg}(\X)$ whose objects support in $\Z$. We write $\1A_{\dg}$ and $\B_{\dg}$ as the natural $\dg$ enhancement corresponding to $\Per_{\dg}(\X)$.

 \par

 According to Proposition \ref{sodsupport}, either all objects of $\1A$ support in $\Z$, or all objects of $\B$ support in $\Z$. Suppose all objects of $\B$ support in $\Z$, then the original semi-orthogonal decomposition induces a semi-orthogonal decomposition
 $$\Perf_{\Z}(\X):=\langle \1A_{\Z},\B\rangle$$
 where $\1A_{\Z}$ is the full triangulated subcategory of $\1A$ whose objects support in $\Z$. First, given any object $\E\in \Perf_{\Z}(\X)$, there is a triangle with respect to the original semi-orthogonal decomposition
 $$\E_{1}\rightarrow \E\rightarrow \E_{2}\rightarrow \E_{1}[1].$$
 That is, $\E_{2}\in \1A$ and $\E_{1}\in \B$.
 Furthermore, support of $\E$ and $\E_{2}$ are in $\Z$. Thus, using Lemma \ref{support}, $\E_{1}$ also supports in $\Z$. Hence $\E_{1}\in \1A_{\Z}$.
 Second, $\Hom(\B,\1A_{\Z})=0$ by the original semi-orthogonal decomposition.

 \par

 According to Theorem \ref{soddg}, we have isomorphism of Hochschild homology
 $$\HH_{\bullet}(\Per_{\Z,\dg}(\X))\cong \HH_{\bullet}(\1A_{\Z,\dg})\oplus\HH_{\bullet}(\B_{\dg}).$$
 Now, with the same technique in proof of Theorem \ref{CY}, the Hochschild Homology $\HH_{\bullet}(\B)$ is isomorphic to Hochschild homology of a special $\dg$ enhancement of $\B\colon$
 Choosing a strong compact generator $\F$ of $\B$ (assume it is $\K$-injective), define $\dg$ algebra $\Lambda=\Hom_{\dg}(\F,\F)$. Then $\HH_{\bullet}(\Lambda)\cong \HH_{\bullet}(\B)$. We regard the $\dg$ algebra $\Lambda$ as a $\dg$ category with one object $\circ$. There is a natural morphism of $\dg$ categories
 $$\Lambda\rightarrow \B_{\dg}\quad\quad \circ\mapsto \F.$$
 It is a derived Morita morphism since this natural map induce natural equivalence $\D(\Lambda)\cong \D(\B_{\dg})\cong \B$. Thus, $\HH_{\bullet}(\Lambda)\cong \HH_{\bullet}(\B_{\dg})$. Finally, $\HH_{-\n}(\B_{\dg})\cong \HH_{-\n}(\B)\neq 0$ should be a subspace of $\HH_{-\n}(\Perf_{\Z}(\X))=0$, a contradiction. So, we prove that every objects of $\1A$ support in $\Z$.

 \par

 We prove that in this case, $\HH_{-\n}(\1A)=0$. Otherwise, the same method shows that every objects of $\B$ should support in $\Z$, a contradiction. It is easy to see that we can exchange the role of $\1A$ and $\B$.

 \par

 At last, we prove $\mathcal{B}$ is indecomposable if it is $\n$ Calabi-Yau category. If not, let $\mathcal{B}=\langle \mathcal{B}_{1},\mathcal{B}_{2}\rangle$. Clearly, $\mathcal{B}_{1}$ and $\mathcal{B}_{2}$ are both $\n$-Calabi-Yau categories. There is a semi-orthogonal decomposition
 $$\Perf(\X)=\langle \mathcal{A},\mathcal{B}_{1},\mathcal{B}_{2}\rangle$$
  The same argument above shows that $\k(\x)\in \mathcal{B}_{2}$ and
  $\k(\x)\in \langle \mathcal{A}, \mathcal{B}_{1}\rangle$ for any closed points $\x\in \X\setminus \Z$, a contradiction.
 \end{proof}

\begin{eg}
 Let $\Y$ be a Calabi--Yau variety of dimension $\n$, and $\X$ be the blow-up of one ponit
 of $\Y$. According to Orlov blow-up formula, there is a semi-orthogonal decomposition,
 $$ \Perf(\X)=\langle\mathcal{O}_{\E}(\E),\Perf(\Y)\rangle.$$
 $\E$ is the exceptional locus of the blow-up $\f\colon \X\rightarrow \Y$.
 The base locus of $\omega_{\X}$ is $\E$, and clearly support of $\langle \mathcal{O}_{\E}(\E)\rangle \subseteq \E$. Point sheaf $\k(\x)\in \Perf(\Y)$ for $\x\in \X\setminus \E\cong \Y\setminus \p\t $. Furthermore, $\HH_{-\n}(\langle\mathcal{O}_{\E}(\E)\rangle)\cong \HH_{-\n}(\mathsf{D}^{\mathsf{b}}(\p\t))= 0$.
 \end{eg}

\begin{rem}\label{remA}
From the above's proof, the $-\n$ Hochschild homology of $\1A$ vanishes.
It coincides with our intuition that $\1A$ should be smaller. In addition, the $-\n$ Hochschild homology of $\1A$ and $\B$ can't be nonzero at the same time. The situation $\Perf(\X)=\langle \mathsf{D}^{\mathsf{b}}(\mathsf{Y}_{1}), \mathsf{D}^{\mathsf{b}}(\mathsf{Y}_{2}), \cdots \rangle$ can't happen that $\dim \X= \dim \Y_{1}= \dim \Y_{2}$ and both $\Y_{1}$, $\Y_{2}$ are varieties with canonical bundles having non zero global sections.
\end{rem}

Actually, we can also only use Theorem \ref{gfkernel} to prove Theorem \ref{A} since the following theorem.

\begin{thm}\label{refinesod}
 Assume there is a non trivial semi-orthogonal decomposition, $\Perf(\X)=\langle\mathcal{A}, \mathcal{B}\rangle.$ Denote $\Z=\2B\s\vert \omega_{\X}\vert$. It could happen that $\Z$ have zero dimensional components. Deleting zero component of $\Z$, denote it $\Z'$, then the support of $\mathcal{A}$ is in $\Z'$ or the support of $\mathcal{B}$ is in $\Z'$.
 \end{thm}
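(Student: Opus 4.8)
The plan is to bootstrap the statement from Kawatani--Okawa's Proposition~\ref{sodsupport} and the non-admissibility result Theorem~\ref{CY}(2), via the orthogonal decomposition of $\Perf_{\Z}(\X)$ coming from the zero-dimensional components of $\Z$. By Proposition~\ref{sodsupport}, after possibly interchanging $\mathcal{A}$ and $\mathcal{B}$ we may assume that every object of $\mathcal{B}$ is supported in $\Z$, i.e. $\mathcal{B}\subseteq\Perf_{\Z}(\X)$ (the existence of a nontrivial decomposition forces $\H^{0}(\X,\omega_{\X})\neq 0$, so $\Z$ is a proper closed subset, and we seek to upgrade ``supported in $\Z$'' to ``supported in $\Z'$''). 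Write $\Z=\Z'\sqcup\{\x_{1},\dots,\x_{N}\}$, where $\x_{1},\dots,\x_{N}$ are the finitely many (by Noetherianity) zero-dimensional components of $\Z$; the closed subsets $\Z'$ and $\{\x_{1}\},\dots,\{\x_{N}\}$ are pairwise disjoint. Since complexes with disjoint supports have vanishing $\R\Hom$, the local-cohomology triangles $\R\underline{\Gamma_{\{\x_{\i}\}}}F\to F\to\R\j_{\ast}\j^{\ast}F$ split for every $F\in\Perf_{\Z}(\X)$, and splitting off the points one at a time yields an orthogonal decomposition
$$\Perf_{\Z}(\X)\;\cong\;\Perf_{\Z'}(\X)\ \oplus\ \bigoplus_{\i=1}^{N}\Perf_{\{\x_{\i}\}}(\X),$$
whose projection onto the $\i$-th point summand is the local cohomology functor $\R\underline{\Gamma_{\{\x_{\i}\}}}$. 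So it suffices to prove $\mathcal{B}\cap\Perf_{\{\x_{\i}\}}(\X)=0$ for every $\i$.

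Fix $\x_{0}:=\x_{\i}$ and set $\mathcal{B}_{0}:=\mathcal{B}\cap\Perf_{\{\x_{0}\}}(\X)$; this is a thick subcategory of $\Perf_{\{\x_{0}\}}(\X)$, since $\mathcal{B}$ (admissible in $\Perf(\X)$) and $\Perf_{\{\x_{0}\}}(\X)$ are both thick, so the intersection is closed under direct summands. Now $\Perf_{\{\x_{0}\}}(\X)$ is classically generated by the skyscraper $\k(\x_{0})$, and, working in an affine neighbourhood and applying d\'evissage to the finite-length cohomology modules over the regular local ring $\mathcal{O}_{\X,\x_{0}}$, $\k(\x_{0})$ lies in the thick closure of every nonzero object of $\Perf_{\{\x_{0}\}}(\X)$; equivalently, by the Hopkins--Neeman--Thomason classification of thick subcategories, $\Perf_{\{\x_{0}\}}(\X)$ has no proper nonzero thick subcategory. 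Hence $\mathcal{B}_{0}=0$ or $\mathcal{B}_{0}=\Perf_{\{\x_{0}\}}(\X)$.

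It remains to exclude $\mathcal{B}_{0}=\Perf_{\{\x_{0}\}}(\X)$. Because $\Perf_{\{\x_{0}\}}(\X)$ is an orthogonal summand of $\Perf_{\Z}(\X)$ and $\mathcal{B}$ is thick, the projection $\R\underline{\Gamma_{\{\x_{0}\}}}$ sends each object of $\mathcal{B}$ to one of its own direct summands, hence into $\mathcal{B}_{0}$, and it is simultaneously a left and a right adjoint of $\mathcal{B}_{0}\hookrightarrow\mathcal{B}$; thus $\mathcal{B}_{0}$ is admissible in $\mathcal{B}$. Since admissibility is transitive and $\mathcal{B}$ is admissible in $\Perf(\X)=\DbX$ with $\X$ smooth projective, the equality $\mathcal{B}_{0}=\Perf_{\{\x_{0}\}}(\X)$ would exhibit $\Perf_{\{\x_{0}\}}(\X)$ as an admissible subcategory of the derived category of a smooth projective variety, contradicting Theorem~\ref{CY}(2). (Equivalently: $\omega_{\X}$ is trivial near $\x_{0}$, so $\Perf_{\{\x_{0}\}}(\X)$ has Serre functor $[\n]$, and as an admissible subcategory of $\DbX$ it would satisfy $\HH_{-\n}\neq 0$ by Lemma~\ref{Kuznetsovsod}, contradicting Corollary~\ref{-nHoch}.) Therefore $\mathcal{B}_{0}=0$ for every $\i$, and the orthogonal decomposition above forces $\mathcal{B}\subseteq\Perf_{\Z'}(\X)$; the case where $\mathcal{A}$ is the component supported in $\Z$ is entirely symmetric.

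The step I expect to be the main obstacle is the bookkeeping with supports: producing the orthogonal decomposition by splitting the local-cohomology triangles, and --- most delicately --- checking that $\mathcal{B}_{0}=\mathcal{B}\cap\Perf_{\{\x_{0}\}}(\X)$ is genuinely admissible \emph{inside} $\mathcal{B}$, which is what licenses the appeal to Theorem~\ref{CY}(2) (or to Lemma~\ref{Kuznetsovsod}). Once the classification of thick subcategories is in hand the dichotomy for $\mathcal{B}_{0}$ is immediate, and the reduction to a single isolated point is routine.
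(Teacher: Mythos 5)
Your argument is correct, but it takes a genuinely different route from the paper's. Both proofs begin by invoking Proposition~\ref{sodsupport} to reduce to the case that one component (say $\mathcal{A}$, or $\mathcal{B}$ in your notation) is supported in $\Z$, and both exploit the fact that an object supported in $\Z$ splits orthogonally into a $\Z'$-piece and pieces at the isolated points. From there you diverge: you classify the possible intersections $\mathcal{B}\cap\Perf_{\{\x_0\}}(\X)$ using the Hopkins--Neeman--Thomason theorem to get a zero/all dichotomy, and then kill the nonzero case by showing $\Perf_{\{\x_0\}}(\X)$ would be admissible in $\Perf(\X)$, which contradicts Theorem~\ref{CY}(2) (equivalently Lemma~\ref{Kuznetsovsod} plus Corollary~\ref{-nHoch}). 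The paper's proof is considerably more elementary and avoids both Hochschild homology and the classification of thick subcategories: if some object of $\mathcal{A}$ has an isolated point in its support, the orthogonal splitting gives a nonzero $\mathcal{A}_{\Z\setminus\Z'}$; since $\omega_{\X}$ is trivial near those points, $\mathcal{A}_{\Z\setminus\Z'}\otimes\omega_{\X}=\mathcal{A}_{\Z\setminus\Z'}$, and Serre duality then upgrades the semi-orthogonal decomposition $\Perf(\X)=\langle\mathcal{A}_{\Z\setminus\Z'},\langle\mathcal{A}_{\Z'},\mathcal{B}\rangle\rangle$ to a genuine orthogonal decomposition, contradicting connectedness of $\X$. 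So the paper gets the result from nothing more than Serre duality and connectedness, whereas you invoke the full force of Theorem~\ref{CY}(2). Your route is still valid (and the admissibility of $\mathcal{B}_0$ inside $\mathcal{B}$, which you flag as the delicate step, does go through as you argue, because the orthogonal projection $\R\underline{\Gamma_{\{\x_0\}}}$ preserves $\mathcal{B}$ by thickness and is a two-sided adjoint), but it is worth knowing the lighter argument. One minor caveat: the d\'evissage you sketch (``$\k(\x_0)$ lies in the thick closure of every nonzero object'') should really be justified via Hopkins--Neeman--Thomason rather than by a na\"ive composition-series argument, since thick closures are not closed under subobjects; you do mention HNT, so this is a presentational point rather than a gap.
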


 \begin{proof}
 According to the Proposition \ref{sodsupport}, the support of  $\mathcal{A}$ or the support of $\mathcal{B}$ is in $\Z$. Without loss of generality, assume the support of $\mathcal{A}$ is in $\Z$. We prove that it is actually in $\Z'$. Suppose $\exists$ $\E\in \mathcal{A}$ whose support contains some point components, then it is easy to show that $\mathcal{A}_{\Z\setminus \Z'}$  is nontrivial. Then we have an orthogonal decomposition $\mathcal{A}=\langle\mathcal{A}_{\Z\setminus \Z'},\mathcal{A}_{\Z'}\rangle$. Therefore we have a nontrivial semi-orthogonal decomposition, $\Perf(\X)=\langle \mathcal{A}_{\Z\setminus \Z'}, \mathcal{C}\rangle$ where $\mathcal{C}\colon=\langle \mathcal{A}_{Z'}, \mathcal{B}\rangle.$ However, $\mathcal{A}_{\Z\setminus \Z'}\otimes \omega_{\X} =\mathcal{A}_{\Z\setminus \Z'}$, hence it is a nontrivial orthogonal decomposition, contradicts that $\X$ is connected.
 \end{proof}

 \subsection{Applications to a Kuznetsov's conjecture}
 Kunetsov problem \cite{Kuznetsov_2016}$\colon$ Let $\X$ be a projective smooth variety, $\mathcal{B}$ be a $\n$-Calabi--Yau category such that $\n>\dim \X$. Then $\mathcal{B}$ can not be an admissible subcategory of $\Perf(\X)$ since otherwise $0= \HH_{-\n}(\X)\geq \HH_{-\n}(\mathcal{B})\neq 0$. Hence in order to find $\n$-Calabi--Yau subcategory inside $\Perf(\X)$, we must have $\n\leq \dim \X$. It was conjectured that the case that $\n=\dim \X$ should be a certain boundary condition, and that the semi-orthogonal decomposition
only comes from Orlov blow-up formula.

\begin{conj}\label{Kuznetsovconj}\cite{Kuznetsov_2019}
  Suppose we have $\n=\dim \X$ Calabi--Yau category $\mathcal{B}$ $\subset$ $\Perf(\X)$, then $\exists$ a projective smooth variety $\X'$ with morphism
$\X\rightarrow \X'$ being the blow-up. Furthermore $\mathcal{B} \cong \mathsf{D}^{\mathsf{b}}(\mathsf{X}')$.

\end{conj}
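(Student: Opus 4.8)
The plan is to turn the abstract datum of an $\n$-Calabi--Yau admissible subcategory $\mathcal{B}\subset\Perf(\X)$ into a \emph{geometric} datum by constructing the target variety $\X'$ out of $\mathcal{B}$ itself. First I would feed $\mathcal{B}$ into the semi-orthogonal machinery of this note. Since $\mathcal{B}$ is admissible we may write $\Perf(\X)=\langle\mathcal{A},\mathcal{B}\rangle$ with $\mathcal{A}={}^{\perp}\mathcal{B}$; assuming the inclusion is proper (otherwise $\X'=\X$ and there is nothing to prove), this decomposition is nontrivial, and by Lemma \ref{Kuznetsovsod} we have $\HH_{-\n}(\mathcal{B})\neq0$. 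Hence Theorem \ref{A} applies: the support of every object of $\mathcal{A}$ lies in the proper closed subset $\Z=\2B\s\vert\omega_{\X}\vert$, the skyscrapers $\k(\x)$ with $\x\in\U:=\X\setminus\Z$ all lie in $\mathcal{B}$, $\HH_{-\n}(\mathcal{A})=0$, and $\HH_{-\n}(\Perf(\X))\cong\HH_{-\n}(\mathcal{B})$. Thus $\mathcal{B}$ already carries a dense family of point objects $\{\k(\x)\}_{\x\in\U}$, each satisfying $\RHom(\k(\x),\k(\x))\cong\bigwedge^{\bullet}(\mathsf{T}_{\x}\X)$, while $\mathcal{A}$ is concentrated over $\Z$ and plays the role of the ``relative'' part of an Orlov blow-up decomposition.

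The heart of the proposal is then to construct $\X'$ as a moduli space of point objects of $\mathcal{B}$, in the spirit of Bridgeland's reconstruction of a variety from its point objects, adapted to the Calabi--Yau setting (note that the Bondal--Orlov ampleness-of-$\omega$ mechanism is unavailable here, since the Serre functor of $\mathcal{B}$ is a shift). Over $\U$ this moduli problem is already represented by $\U$ via $\x\mapsto\k(\x)$; the aim is to show that it is represented by a proper smooth $\X'$ extending $\U$ whose universal family induces a Fourier--Mukai equivalence $\Phi\colon\mathsf{D}^{\mathsf{b}}(\X')\xrightarrow{\ \sim\ }\mathcal{B}$. Concretely: (i) verify that the point objects of $\mathcal{B}$ satisfy Bridgeland's axioms for point objects of a variety; (ii) deduce $\mathsf{D}^{\mathsf{b}}(\X')\cong\mathcal{B}$ from the universal family; (iii) upgrade the birational identification $\U\hookrightarrow\X$, $\U\hookrightarrow\X'$ to an everywhere-defined morphism $\f\colon\X\to\X'$ whose contracted loci lie over $\Z$ and are detected by the generators of $\mathcal{A}$; (iv) compare $\mathcal{A}$ with Orlov's blow-up formula for $\f$, which forces $\f$ to be a blow-up and pins down $\mathcal{B}=\L\f^{\ast}\mathsf{D}^{\mathsf{b}}(\X')$.

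The hard part --- and where a proof of the conjecture as stated cannot be completed --- is step (ii): there is no reason a priori that the moduli space of point objects of an arbitrary $\n$-Calabi--Yau admissible subcategory is a proper smooth variety, nor that $\Phi$ is essentially surjective; in short $\mathcal{B}$ need not be geometric, and then no $\X'$ exists. Indeed the conjecture is false in this generality: \cite[Section 5]{rol2015compact} exhibits smooth projective fourfolds carrying connected $4$-Calabi--Yau admissible subcategories that are not equivalent to $\mathsf{D}^{\mathsf{b}}$ of any smooth projective variety. The workable form of the proposal therefore adds exactly the data that rigidifies the moduli problem near the boundary, namely a ``marking'' of $\mathcal{B}$ by the distinguished point objects $\D_{\i}=\L\f^{\ast}\k(\y_{\i})$ of a prescribed blow-up $\f\colon\X\to\Y$ of a Calabi--Yau $\Y$ at points $\{\y_{\i}\}$: with that hypothesis the moduli space of point objects of $\mathcal{B}$ is identified with $\Y$, steps (i)--(iv) run with $\X'=\Y$, and one obtains $\mathcal{B}=\L\f^{\ast}\Perf(\Y)$ as in Theorem \ref{Main1}. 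Removing the marking --- that is, showing that the $\n$-Calabi--Yau property together with admissibility already forces $\mathcal{B}$ to be geometric --- is the genuine difficulty, and the hyper-Kähler examples indicate that this is too much to expect unless $\X$ is further restricted, for instance to surfaces, where birational geometry is controlled.
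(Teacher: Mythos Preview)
The statement is a conjecture that the paper does not prove; indeed the paper explicitly records (citing the same hyper-K\"ahler examples from \cite{rol2015compact} that you invoke) that it is false in general. So there is no proof in the paper to compare against, and your diagnosis of where a direct attack breaks --- step (ii), the representability and geometricity of the moduli of point objects of $\mathcal{B}$ --- agrees with the paper's own assessment.

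The substantive comparison is with the marked salvage, Theorem \ref{Main1}, which both you and the paper isolate as the workable statement. Here your route and the paper's genuinely diverge. You propose a Bridgeland-style moduli of point objects of $\mathcal{B}$, a universal family inducing a Fourier--Mukai equivalence $\mathsf{D}^{\mathsf{b}}(\X')\cong\mathcal{B}$, and then a comparison of the complement $\mathcal{A}$ against Orlov's blow-up formula to pin down $\f$. The paper avoids moduli entirely and argues inside $\Perf(\X)$: once the marking places the $\D_{\i}=\L\f^{\ast}\k(\y_{\i})$ in $\mathcal{B}$ and Theorem \ref{A} already places $\k(\x)\cong\L\f^{\ast}\k(\f(\x))$ in $\mathcal{B}$ for $\x\notin\Z$, the entire spanning class $\{\L\f^{\ast}\k(\y)[\m]\}_{\y\in\Y}$ lands in $\mathcal{B}$, so Lemma \ref{lem 4.19} (adjoint plus spanning class) forces $\L\f^{\ast}\Perf(\Y)\subset\mathcal{B}$; the $\n$-Calabi--Yau property of $\mathcal{B}$ upgrades the resulting semi-orthogonal decomposition $\mathcal{B}=\langle\Perf(\Y)^{\perp},\Perf(\Y)\rangle$ to an orthogonal one; and additivity of $\HH^{0}$ together with $\HH^{0}(\mathcal{B})\cong\k$ (from Theorem \ref{A}) kills $\Perf(\Y)^{\perp}$. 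This is shorter and sidesteps the representability, properness and smoothness issues you would have to control in your steps (i)--(iv). Your approach, by contrast, is closer in spirit to the original conjecture in that it tries to \emph{construct} the target variety from $\mathcal{B}$ rather than verify that a given $\L\f^{\ast}\Perf(\Y)$ exhausts it; that viewpoint could be valuable in situations where no candidate blow-down is handed to you, but it is also precisely the step whose failure produces the counterexamples.
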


 \par

  However, Kuznetsov's conjecture is far from being true, see ``Compact $Hyper-K\ddot{a}hler$ Categories'' \cite[Section 5]{rol2015compact}. It was shown in the paper that there are infinite many geometric $4$-folds containing $4$-Calabi--Yau (connected) categories which are not derived category of a projective smooth variety. The problem still make sense when we firstly assume the Calabi--Yau category to be derived category of variety, that is, Kuznetsov problem can be refined as follows$\colon$

 \begin{conj}\label{conjCY}
 Suppose we have a proper fully faithful embedding
 $\Perf(\Y)\hookrightarrow \Perf(\X)$, $\dim \X= \dim \Y$, $\Y$ is a Calabi--Yau variety. Then, one can find a Calabi--Yau variety $\Y'$ such that $\X$ is a certain blow-up of $\Y'$. Furthermore, $\Perf(\Y')\cong \Perf(\Y)$.
 \end{conj}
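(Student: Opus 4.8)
The plan is to peel off the Calabi--Yau piece using Theorem \ref{A}, reconstruct a birational model of $\X$ from the point objects that are then forced into it, and close the loop with Theorem \ref{Main}. Write $\B:=\Phi(\Perf(\Y))\subset\Perf(\X)$ for the image of the given fully faithful embedding. Since $\Y$ is a Calabi--Yau $\n$-fold its Serre functor is $[\n]$, so $\B$ is an $\n$-Calabi--Yau admissible subcategory of $\Perf(\X)$ (admissibility because $\Perf(\Y)$ is smooth and proper, so the embedding has both adjoints). If $\B=\Perf(\X)$, then the Serre functor of $\Perf(\X)$ is $[\n]$, hence $\omega_{\X}\cong\mathcal{O}_{\X}$ and one may take $\Y'=\X$; so assume $\B\neq\Perf(\X)$. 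By Lemma \ref{Kuznetsovsod} we have $\HH_{-\n}(\B)\neq 0$, so Theorem \ref{A} applies to the nontrivial semi-orthogonal decomposition $\Perf(\X)=\langle\1A,\B\rangle$ determined by $\B$: every object of $\1A$ is supported on $\Z=\2B\s\vert\omega_{\X}\vert$, which is a proper closed subset, and $\k(\x)\in\B$ for every closed point $\x\in\U:=\X\setminus\Z$. By Theorem \ref{refinesod} we may shrink $\Z$ to the union of its positive-dimensional components without changing $\U$.

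The next step is to read off a birational identification of $\U$ with an open subset of $\Y$. For $\x\in\U$ the object $\Phi^{-1}(\k(\x))$ is a point-like object of $\Perf(\Y)$, and for a smooth projective Calabi--Yau $\Y$ such objects are classified: up to shift each is $\k(\y)$ for a unique closed point $\y\in\Y$ (one uses that $\Phi$ is an equivalence onto $\B$, so that $\Ext^{\bullet}_{\Perf(\X)}(\k(\x),\k(\x'))$ coincides with the corresponding $\Ext$-algebra between the point objects in $\Perf(\Y)$, which rigidifies them and rules out exotic autoequivalence-twisted families). This produces an injection $\U\hookrightarrow\Y$, and a flat-family argument over $\U$ upgrades it to an open immersion identifying $\U$ with $\Y\setminus\{\y_{\i}\}$ for a finite set $\{\y_{\i}\}\subset\Y$.

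It remains to promote the birational equivalence $\X\supset\U\cong\Y\setminus\{\y_{\i}\}\subset\Y$ to a morphism realizing $\X$ as a blow-up. The divisorial shape of $\Z$ is pinned down by the canonical bundle: for the blow-up $\g\colon\X\to\Y'$ of a Calabi--Yau $\n$-fold at points one has $\omega_{\X}\cong\mathcal{O}_{\X}\big(\sum_{\i}(\n-1)\E_{\i}\big)$, so $\2B\s\vert\omega_{\X}\vert=\bigsqcup_{\i}\E_{\i}$ with each $\E_{\i}\cong\mathbb{P}^{\n-1}$. The goal is the converse: that $\Z$ is a disjoint union of such exceptional projective spaces and that contracting them yields a smooth projective $\Y'$ with $\omega_{\Y'}\cong\mathcal{O}_{\Y'}$. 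Granting this, $\g\colon\X\to\Y'$ is the blow-up of the Calabi--Yau variety $\Y'$ at finitely many points $\{\y_{\i}'\}$, with $\U\cong\Y'\setminus\{\y_{\i}'\}$. To finish, apply Theorem \ref{Main} to $\g$: once one checks the marked-point hypothesis that each $\D_{\i}=\L\g^{\ast}\k(\y_{\i}')$ lies in $\B$ --- which is exactly the situation covered by the remark following Theorem \ref{Main} whenever an autoequivalence of $\Perf(\X)$ carries the $\D_{\i}$ off $\Z$ --- Theorem \ref{Main} gives $\B=\L\g^{\ast}\Perf(\Y')$, and therefore $\Perf(\Y')\cong\B\cong\Perf(\Y)$, which is the assertion.

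The hard part is the birational-geometry step of the previous paragraph: extracting, from the bare data that $\1A$ is supported on the proper closed $\Z$ and $\U\cong\Y\setminus\{\y_{\i}\}$, the precise statement that $\Z$ is a disjoint union of exceptional $\mathbb{P}^{\n-1}$'s that contract to a Calabi--Yau. This is precisely where the unrefined Kuznetsov conjecture fails --- the compact hyper-K\"ahler categories of \cite[Section 5]{rol2015compact} give $\n$-Calabi--Yau components that are not of blow-up type --- so the argument must genuinely use that $\B$ is the derived category of a \emph{geometric} Calabi--Yau $\Y$, not merely an abstract $\n$-Calabi--Yau category. In dimension $\n=2$ this step is classical: a birational morphism of smooth projective surfaces with exceptional objects supported on a curve factors as a composition of blow-downs of $(-1)$-curves by Castelnuovo, and contracting those from a blow-up of a $K3$ or abelian surface returns a $K3$ or abelian surface; this is why the conjecture is expected for surfaces. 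In higher dimension, establishing the structure of $\Z$ is the crux and is left open.
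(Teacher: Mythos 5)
The statement you are attempting to prove is labelled a \emph{Conjecture} in the paper, and the paper does not contain a proof of it --- only partial results pointing in its direction (Theorems \ref{B}, \ref{C}, \ref{D}, \ref{Main1}). So there is no proof-of-record to compare against; what you have written is a new attempt. You honestly flag one gap yourself, namely the step that identifies $\Z$ with a disjoint union of exceptional $\mathbb{P}^{\n-1}$'s contracting to a Calabi--Yau, and you are right that this is where the unrefined Kuznetsov conjecture fails. But there are two further gaps that you do not acknowledge, and they undercut the earlier steps of the outline.

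First, the reconstruction step is not correct as stated. You assert that on a smooth projective Calabi--Yau $\Y$, a point-like object (one with the $\Ext$-algebra of $\k(\y)$) is, up to shift, a skyscraper sheaf, and that preservation of $\Ext$-algebras under the fully faithful $\Phi$ therefore ``rigidifies'' the preimages $\Phi^{-1}(\k(\x))$. This is precisely what fails for Calabi--Yau varieties. Bondal--Orlov reconstruction needs $\pm\omega_{\Y}$ ample; for $\omega_{\Y}\cong\mathcal{O}_{\Y}$ the conclusion is false. On an abelian surface every degree-zero line bundle $\L\in\Pic^{0}(\Y)$ has $\Ext^{\bullet}(\L,\L)\cong\H^{\bullet}(\mathcal{O}_{\Y})\cong\Ext^{\bullet}(\k(\y),\k(\y))$, so line bundles are point objects that are not shifted skyscrapers; on a $\K3$, any non-trivial Fourier--Mukai equivalence transports $\{\k(\y)\}$ to a family of stable sheaves of positive rank with the same $\Ext$-algebra. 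Knowing the $\Ext$-algebra does not rule these out. To salvage the step one would need a genuine moduli argument (a Bridgeland-style fine moduli space for the flat family $\{\Phi^{-1}(\k(\x))\}_{\x\in\U}$), and the best one can hope to extract is an open immersion $\U\hookrightarrow\Y''$ into some Fourier--Mukai partner $\Y''$ of $\Y$ --- which is consistent with the $\Y'$ in the conjecture but is not what your argument yields.

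Second, your closing application of Theorem \ref{Main1} is not available in exactly the case you are trying to clinch. You verify the marked-point hypothesis $\D_{\i}\in\mathcal{B}$ via Theorem \ref{D}, which requires an autoequivalence of $\Perf(\X)$ moving the $\D_{\i}$ off the exceptional locus. But the paper's Remark following Theorem \ref{D} observes (via Kawamata's theorem on autoequivalences of surfaces) that no such autoequivalence exists when $\dim\X=2$, so Theorem \ref{D} is vacuous there. Dimension two is the only case in which your Castelnuovo-contraction argument for the structure of $\Z$ is classical, so the two halves of the outline do not mesh: in the dimension where the birational geometry is under control, the reduction to Theorem \ref{Main1} via Theorem \ref{D} is unavailable, and one would need a different argument that the distinguished objects land in $\mathcal{B}$.
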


\begin{rem}
  We can not expect that $\Y'\cong \Y$, since there may be nonbirational Fourier-Mukai partners of some Calabi--Yau varieties, for example, the general $\K_{3}$ surfaces.
 \end{rem}

 \begin{thm}\label{B}
 Let $\X$ be a projective smooth variety of dimension $\n$. Suppose $\dim \H^{0}(\X,\omega_{\X}) \geq 2$, then any $\n$-Calabi--Yau admissible subcategory of $\Perf(\X)$ is not equivalent to a derived category of a smooth projective variety.
 \end{thm}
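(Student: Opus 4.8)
The plan is to obtain a contradiction by comparing the dimension of $\HH_{-\n}$ on the two sides. Suppose, for contradiction, that $\mathcal{B}\subset\Perf(\X)$ is an $\n$-Calabi--Yau admissible subcategory with $\mathcal{B}\cong\Perf(\Y)$ for some smooth projective variety $\Y$. Since $\dim\H^{0}(\X,\omega_{\X})\geq 2$, the canonical bundle $\omega_{\X}$ is nontrivial, so $\Perf(\X)$ itself is not $\n$-Calabi--Yau; hence $\mathcal{B}$ is a \emph{proper} admissible subcategory, and together with its orthogonal complement $\mathcal{A}\neq 0$ it produces a nontrivial semi-orthogonal decomposition of $\Perf(\X)$ having $\mathcal{B}$ as a component.

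Next I would invoke the machinery already in place. By Lemma \ref{Kuznetsovsod} an admissible $\n$-Calabi--Yau subcategory of the derived category of a smooth projective variety satisfies $\HH_{-\n}(\mathcal{B})\neq 0$, so Theorem \ref{A} applies and yields both $\HH_{-\n}(\Perf(\X))\cong\HH_{-\n}(\mathcal{B})$ and the indecomposability of $\mathcal{B}$. Indecomposability forces $\Y$ to be connected, since a disjoint union would split $\Perf(\Y)$. Moreover, comparing the Serre functor $[\n]$ of the $\n$-Calabi--Yau category $\mathcal{B}$ with the Serre functor $(-)\otimes\omega_{\Y}[\dim\Y]$ of $\Perf(\Y)$ and using uniqueness of Serre functors, one gets $\dim\Y=\n$ and $\omega_{\Y}\cong\mathcal{O}_{\Y}$; that is, $\Y$ is a connected Calabi--Yau variety of dimension $\n$.

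Finally I would compute both sides via the Hochschild--Kostant--Rosenberg formula of Theorem \ref{geometryformula}, taken with support equal to the whole variety: $\HH_{-\n}(\Perf(\X))\cong\bigoplus_{\p-\q=-\n}\H^{\p}(\X,\Omega^{\q}_{\X})=\H^{0}(\X,\omega_{\X})$, which has dimension $\geq 2$ by hypothesis, whereas $\HH_{-\n}(\Perf(\Y))\cong\H^{0}(\Y,\omega_{\Y})=\H^{0}(\Y,\mathcal{O}_{\Y})=\mathbb{C}$ is one-dimensional because $\Y$ is connected with trivial canonical bundle. This contradicts $\HH_{-\n}(\Perf(\X))\cong\HH_{-\n}(\mathcal{B})\cong\HH_{-\n}(\Perf(\Y))$, completing the proof. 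The Hochschild computations are routine given Theorem \ref{geometryformula}; the step that needs care, and which I regard as the crux, is the passage from the categorical hypothesis $\mathcal{B}\cong\Perf(\Y)$ to the geometric facts that $\Y$ is connected and Calabi--Yau of dimension exactly $\n$ — this is precisely where the indecomposability clause of Theorem \ref{A} and uniqueness of Serre functors are used to pin down $\dim\H^{0}(\Y,\omega_{\Y})=1$.
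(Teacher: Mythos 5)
Your proof is correct and reaches the same conclusion, but it takes a genuinely different route from the paper's. The paper invokes Theorem~\ref{A} only for the isomorphism $\HH_{-\n}(\mathcal{B})\cong\HH_{-\n}(\Perf(\X))$, then passes from Hochschild homology to Hochschild cohomology via the Calabi--Yau duality $\HH_{-\n}(\mathcal{B})\cong\HH^{0}(\mathcal{B})$ (a consequence of \cite[Corollary 5.4]{Kuznetsov_2019}), concludes that $\mathcal{B}$ is not connected because $\dim\HH^{0}(\mathcal{B})\geq 2$, and then reads off a contradiction with $\Y$ being a (connected) variety. You instead stay entirely on the homology side: you use the indecomposability clause of Theorem~\ref{A} to force $\Y$ to be connected, use uniqueness of the Serre functor to pin down $\dim\Y=\n$ and $\omega_{\Y}\cong\mathcal{O}_{\Y}$, and then compute $\HH_{-\n}(\Perf(\Y))=\H^{0}(\Y,\mathcal{O}_{\Y})=\k$ directly from Theorem~\ref{geometryformula} to contradict $\dim\HH_{-\n}(\Perf(\X))\geq 2$. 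Your version avoids the homology-cohomology duality for CY categories, at the cost of a few extra steps pinning down $\Y$; it also does not need to presuppose that ``smooth projective variety'' means connected, since connectedness comes out of the indecomposability of $\mathcal{B}$, which makes the argument slightly more self-contained. Both are correct applications of Theorem~\ref{A}, and the overall strategy --- nontriviality of the decomposition, then a dimension count on $\HH_{-\n}$ --- is shared.
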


\begin{proof}
  Suppose there is a $\n$-Calabi--Yau subcategory $\mathcal{B}\subset \Perf(\X)$. According to Theorem \ref{A}, $\dim\HH_{-\n}(\mathcal{B})= \dim \HH_{-\n}(\DbX)\geq 2$. Thus $\mathcal{B}$ is not connected (An admissible subcategory is connected if $\HH^{0}(\bullet)\cong \k$.). Suppose $\mathcal{B}$ is geometric, that is $\mathcal{B}\cong \Perf(\Y)$ for some Calabi--Yau variety $\Y$. Since $\mathcal{B}$ is not connected, Y is not connected too, a contradiction.
  \end{proof}

\begin{conj}\label{ConjCy}
 Let $\mathcal{B}$ be an admissible $\m$-$\C\Y$ subcategory of $\DbY$, where $\Y$ is a projective smooth variety, and $\m\geq 0$. Then, $\mathcal{B}$ is indecomposable if and only if $\HH^{0}(\mathcal{B})\cong \k$.
\end{conj}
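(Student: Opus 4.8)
We outline a strategy; one implication and the ``idempotent part'' of the other are essentially formal, while the real difficulty is to exclude a non-reduced centre.

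\emph{The implication $\HH^{0}(\mathcal{B})\cong\k\Rightarrow\mathcal{B}$ is indecomposable.} The point at which the Calabi--Yau hypothesis enters is that an $\m$-$\C\Y$ category has the property that \emph{every} semi-orthogonal decomposition of it is automatically orthogonal: for an admissible $\mathcal{C}\subseteq\mathcal{B}$ one has ${}^{\perp}\mathcal{C}=\S^{-1}_{\mathcal{B}}(\mathcal{C}^{\perp})$, and the Serre functor $\S_{\mathcal{B}}=[\m]$, being a shift, fixes every full subcategory, so ${}^{\perp}\mathcal{C}=\mathcal{C}^{\perp}$. Hence a nontrivial decomposition $\mathcal{B}=\langle\mathcal{B}_{1},\mathcal{B}_{2}\rangle$ forces $\Hom(\mathcal{B}_{1},\mathcal{B}_{2})=\Hom(\mathcal{B}_{2},\mathcal{B}_{1})=0$, i.e.\ $\mathcal{B}\cong\mathcal{B}_{1}\oplus\mathcal{B}_{2}$ as a product of two nonzero smooth proper categories. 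Choosing dg enhancements this is a product of dg categories, and since the diagonal bimodule of a product splits with vanishing off-diagonal $\Ext$'s one gets $\HH^{0}(\mathcal{B})\cong\HH^{0}(\mathcal{B}_{1})\times\HH^{0}(\mathcal{B}_{2})$, a product of two nonzero unital rings; such an algebra has a nontrivial idempotent and is in particular not $\k$. This is the contrapositive.

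\emph{Towards the converse.} As $\mathcal{B}$ is smooth and proper over $\k=\mathbb{C}$, the ring $\HH^{0}(\mathcal{B})$ --- the centre of $\mathcal{B}$ --- is a finite-dimensional commutative $\k$-algebra, hence Artinian, hence a finite product of local Artinian $\k$-algebras with residue field $\k$. If there were more than one local factor we would obtain a nontrivial central idempotent $\e\in\HH^{0}(\mathcal{B})$; using that $\mathcal{B}$ has an essentially unique, idempotent-complete dg enhancement (cf.\ Lemma \ref{enhancedperf}), such an $\e$ induces an orthogonal splitting $\mathcal{B}\cong\e\mathcal{B}\oplus(1-\e)\mathcal{B}$ with both summands nonzero, contradicting indecomposability. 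So if $\mathcal{B}$ is indecomposable then $\HH^{0}(\mathcal{B})$ is \emph{local}, say $\HH^{0}(\mathcal{B})=\k\oplus\mathfrak{m}$ with $\mathfrak{m}$ its nilpotent maximal ideal, and the conjecture reduces to the assertion $\mathfrak{m}=0$.

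\emph{The main obstacle.} It is here that one must genuinely exploit that $\mathcal{B}$ is admissible \emph{inside} $\DbY$ for an actual smooth projective $\Y$. The natural move is to pass to Hochschild homology: from $\DbY=\langle\mathcal{B}^{\perp},\mathcal{B}\rangle$ and additivity (Theorem \ref{soddg}), $\HH_{\bullet}(\mathcal{B})$ is a direct summand of $\HH_{\bullet}(\DbY)\cong\bigoplus_{\p-\q=\bullet}\H^{\p}(\Y,\Omega^{\q}_{\Y})$ (Theorem \ref{geometryformula}), while Calabi--Yau duality identifies $\HH^{0}(\mathcal{B})\cong\HH_{-\m}(\mathcal{B})$ (so in particular $\m\leq\n=\dim\Y$, by Lemma \ref{Kuznetsovsod}); thus, as a vector space, $\HH^{0}(\mathcal{B})$ is a summand of $\bigoplus_{\p}\H^{\p}(\Y,\Omega^{\p+\m}_{\Y})$. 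This gives only dimension bounds, which are compatible with $\HH^{0}(\mathcal{B})=\k[\varepsilon]/(\varepsilon^{2})$. The hard step --- and the reason the statement is recorded only as a conjecture --- is to upgrade this to a statement about the ring structure: one would like to show that $\HH^{\bullet}(\mathcal{B})$, together with the restricted noncommutative Hodge structure and the Mukai pairing on $\HH_{\bullet}(\DbY)$, behaves like the cohomology algebra of a connected smooth projective variety, since only such a multiplicative input forces $\HH^{0}(\mathcal{B})$ to be reduced, i.e.\ equal to $\k$.
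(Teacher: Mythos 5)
The statement you are attempting to prove is recorded in the paper only as Conjecture~\ref{ConjCy}; the paper offers no proof of it, so there is nothing to compare against on the author's side, and your proposal must be judged on its own terms.

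What you do prove is correct. For the direction $\HH^{0}(\mathcal{B})\cong\k\Rightarrow\mathcal{B}$ indecomposable: since the Serre functor of a $\C\Y$ category is a shift, ${}^{\perp}\mathcal{C}=\S^{-1}(\mathcal{C}^{\perp})=\mathcal{C}^{\perp}$ for any admissible $\mathcal{C}\subseteq\mathcal{B}$, so every semi-orthogonal decomposition of $\mathcal{B}$ is orthogonal and every decomposition triangle splits; Kuznetsov's additivity of Hochschild cohomology for orthogonal decompositions (the same \cite[Proposition~5.5]{K} the paper invokes in the proof of Theorem~\ref{Main1}) then gives $\HH^{0}(\mathcal{B})\cong\HH^{0}(\mathcal{B}_{1})\times\HH^{0}(\mathcal{B}_{2})$, which contains a nontrivial idempotent and so is not a field. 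Your reduction of the converse is also correct and genuinely clarifying: $\HH^{0}(\mathcal{B})$ is a finite-dimensional commutative $\k$-algebra, a nontrivial central idempotent would split $\mathcal{B}$ orthogonally (it is idempotent-complete, being admissible in $\DbY$), so indecomposability forces $\HH^{0}(\mathcal{B})$ to be local Artinian with residue field $\k$, and the entire content of the conjecture is that its maximal ideal vanishes, i.e.\ that $\HH^{0}(\mathcal{B})$ is reduced.

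The gap is exactly where you place it. Additivity of Hochschild homology together with the $\C\Y$ identification $\HH^{0}(\mathcal{B})\cong\HH_{-\m}(\mathcal{B})$ (and Lemma~\ref{Kuznetsovsod}) yield only linear, dimension-type constraints, which cannot distinguish $\k$ from $\k[\varepsilon]/(\varepsilon^{2})$. Excluding a non-reduced local centre would require a genuinely multiplicative input --- the ring structure on $\HH^{\bullet}(\mathcal{B})$, or compatibility with the noncommutative Hodge structure and the Mukai pairing inherited from $\DbY$ --- and no such argument is supplied here or in the paper; this is precisely why the statement is a conjecture and not a theorem. Your proposal is an honest and accurate delineation of which parts are formal and where the open problem actually lies, but it is not a proof, and you correctly refrain from claiming otherwise.
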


\begin{rem}
  If the Conjecture \ref{ConjCy} is true, then Theorem \ref{B} implies that there is no $\n$-Calabi-Yau subcategory in $\Perf(\X)$ ($\dim\X=\n$) if $\dim \H^{0}(\X,\omega_{\X}) \geq 2$, which also satisfies Kuznetsov's original intuition.
\end{rem}

  The following theorem shows some intuition about this A.\ Kuznetsov's problem$\colon$

 \begin{thm}\label{C}
 Let $\X$ be a projective smooth variety. Suppose its derived category $\Perf(\X)$ admits a
 nontrivial semi-orthogonal decomposition with component $\mathcal{B}$ being $\n$-$\C\Y$ category. Here $\n$ is dimension of $\X$. Then, the base locus $\Z$ of the canonical bundle is a proper closed subset. Furthermore, any point sheaves $\k(\x)$ with $\x\in \X\setminus \Z$ belongs to $\mathcal{B}$.
 \end{thm}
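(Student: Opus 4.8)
The plan is to deduce the statement from Theorem \ref{A}, whose only substantive hypothesis---that the $\n$-$\C\Y$ component has nonvanishing $(-\n)$-th Hochschild homology---is supplied by Kuznetsov's nonvanishing result for admissible Calabi--Yau categories (Lemma \ref{Kuznetsovsod}). The first step is to split $\mathcal{B}$ off into a two-term semi-orthogonal decomposition. Since $\mathcal{B}$ is a component of a nontrivial semi-orthogonal decomposition of $\Perf(\X)=\DbX$, it is an admissible subcategory, so at least one of ${}^{\perp}\mathcal{B}$ and $\mathcal{B}^{\perp}$ is nonzero; calling it $\mathcal{A}$, we obtain a nontrivial semi-orthogonal decomposition $\Perf(\X)=\langle\mathcal{A},\mathcal{B}\rangle$ (with $\mathcal{B}$ on the right or on the left, according to which orthogonal was used). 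Because $\mathcal{B}$ is admissible and $\n$-Calabi--Yau with $\n=\dim\X$, Lemma \ref{Kuznetsovsod} gives $\HH_{-\n}(\mathcal{B})\neq 0$.

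Next I would verify that $\Z$ is a proper closed subset. By the additivity of Hochschild homology along semi-orthogonal decompositions (Theorem \ref{soddg}, applied to the canonical $\dg$ enhancement of $\Perf(\X)$) there is a direct sum decomposition $\HH_{-\n}(\Perf(\X))\cong\HH_{-\n}(\mathcal{A})\oplus\HH_{-\n}(\mathcal{B})$, while the HKR formula of Theorem \ref{geometryformula}, taken with the closed subscheme equal to all of $\X$, gives $\HH_{-\n}(\Perf(\X))\cong\H^{0}(\X,\omega_{\X})$. Hence $\H^{0}(\X,\omega_{\X})$ contains the nonzero summand $\HH_{-\n}(\mathcal{B})$, so $\omega_{\X}$ is effective, and therefore its base locus $\Z$ is a proper closed subset of $\X$. (This is the same observation that opens the proof of Theorem \ref{A}.)

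Finally I would apply Theorem \ref{A} to the decomposition $\Perf(\X)=\langle\mathcal{A},\mathcal{B}\rangle$: since $\HH_{-\n}(\mathcal{B})\neq 0$, it follows that the support of every object of $\mathcal{A}$ is contained in $\Z$ and that $\k(\x)\in\mathcal{B}$ for every closed point $\x\in\X\setminus\Z$, which is exactly the assertion. Here one uses that the statement of Theorem \ref{A} holds with the roles of its two components exchanged, so that it is irrelevant on which side of the pair $\mathcal{B}$ was placed in the first step.

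This argument introduces nothing essentially new beyond Theorem \ref{A} and Lemma \ref{Kuznetsovsod}; the only point requiring care is the bookkeeping in the first step---that an interior component of a longer semi-orthogonal decomposition can always be split off into a two-term one, that the resulting two-term decomposition is still nontrivial (using that every component of a nontrivial decomposition is nonzero, so $\mathcal{B}\neq 0$ and $\mathcal{B}\neq\Perf(\X)$), and that $\mathcal{B}$ may be taken as either end of the pair before invoking Theorem \ref{A}. I would not expect any genuine obstacle.
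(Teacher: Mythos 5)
Your proof is correct and follows essentially the same route as the paper: reduce to a two-term semi-orthogonal decomposition $\langle\mathcal{A},\mathcal{B}\rangle$, note $\HH_{-\n}(\mathcal{B})\neq 0$ via Kuznetsov's nonvanishing result (Lemma \ref{Kuznetsovsod}), and invoke Theorem \ref{A}. The paper's proof is simply the one-line remark that this follows from Theorem \ref{A}, so your write-up just spells out the bookkeeping (nontriviality of the two-term decomposition, the HKR and additivity facts making $\Z$ proper) that the paper leaves implicit.
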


\begin{proof}
This follows from Theorem \ref{A} since $\HH_{-\n}(\mathcal{B})\neq 0$ for Calabi-Yau category $\mathcal{B}$.
\end{proof}

 Since the good knowledge of the birational geometry of the surfaces, we expect that the Kuznetsov's problem can be achieved for the cases of surfaces. Since the Theorem \ref{B}, we always assume the $2$-Calabi-Yau category to be connected.

\begin{enumerate}
  \item $\k(\X)=-\infty$. If $\X$ is a rational surface, then by Hochschild homology techniques, there is no $\C\Y$-$2$ category in $\Perf(\X)$. If $\X$ is ruled surface, write $\X=\mathbb{P}^{1}\times \C$ for some curves $\C$. Then again by Hochschild homology theory, $\Perf(\X)$ has no admissible $\C\Y$-$2$ category.
  \item $\k(\X)=0$, it is well known that the Enrique surfaces (and their blow-ups) or bi-elliptic surfaces (and their blow-ups) is of geometric genus $0$, hence there is no admissible $\C\Y$-$2$ in $\DbX$.
According to classical classification of projective smooth surfaces, it remains to the cases: blow-up of abelian surfaces, $\K_{3}$ surfaces, elliptic surfaces, and minimal surfaces of general type.
\end{enumerate}

\par

Finally, we propose an interesting strategy to refine this Kuznetsov's  problem. We need a lemma which is interesting in itself.

\begin{lem}\label{lem 4.19}
Let $\T$ be a triangulated category with a semi-orthogonal decomposition
 $\mathcal{T}=\langle \mathcal{A}, \mathcal{B}\rangle$. Suppose there is a fully faithful embedding of triangulated categories $\i\colon \mathcal{T_{1}}\hookrightarrow \mathcal{T}$ with right adjoint $\R$, and $\mathcal{T_{1}}$ admits Serre functor. Let $\{\E_{\j}[\m]\}$ be a set of objects that is a spanning class of $\mathcal{T_{1}}$. Assume $\i$ maps this set of objects into $\mathcal{B}$, then $\i$ maps $\mathcal{T_{1}}$ into
 $\mathcal{B}$. Note that the statement is true if replacing $\mathcal{B}$ with $\mathcal{A}$, and right adjoint with left adjoint.
\end{lem}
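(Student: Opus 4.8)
The plan is to reduce the lemma to the assertion that a single exact functor is zero, and then to kill that functor using the spanning class together with an adjunction. I treat the case of $\mathcal{B}$ with a right adjoint $\R$; the other case follows by the obvious dualization (interchange $\mathcal{A}\leftrightarrow\mathcal{B}$, $\alpha\leftrightarrow\beta$, and left $\leftrightarrow$ right adjoints throughout). Write $\mathcal{T}=\langle\mathcal{A},\mathcal{B}\rangle$, let $\j_{\mathcal{A}}\colon\mathcal{A}\hookrightarrow\mathcal{T}$ denote the inclusion, let $\alpha\colon\mathcal{T}\to\mathcal{A}$ be the projection functor (the left adjoint of $\j_{\mathcal{A}}$), and let $\beta\colon\mathcal{T}\to\mathcal{B}$ be the complementary projection (the right adjoint of the inclusion $\mathcal{B}\hookrightarrow\mathcal{T}$), so that every $T\in\mathcal{T}$ lies in a triangle $\beta(T)\to T\to\alpha(T)\to\beta(T)[1]$; these functors exist for any semi-orthogonal decomposition, and $\ker\alpha=\mathcal{B}$ (an object lies in $\mathcal{B}$ exactly when its $\mathcal{A}$-component vanishes). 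Hence, for $X\in\mathcal{T}_{1}$, one has $\i(X)\in\mathcal{B}$ if and only if $\alpha(\i(X))=0$, so the statement reduces to showing that
\[
\F\ :=\ \alpha\circ\i\ \colon\ \mathcal{T}_{1}\longrightarrow\mathcal{A}
\]
is the zero functor.

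I would then argue as follows. Since $\alpha\dashv\j_{\mathcal{A}}$ and $\i\dashv\R$, the composite $\F=\alpha\circ\i$ has a right adjoint $\G:=\R\circ\j_{\mathcal{A}}\colon\mathcal{A}\to\mathcal{T}_{1}$, with $\Hom_{\mathcal{A}}(\F(X),A)\cong\Hom_{\mathcal{T}_{1}}(X,\G(A))$ naturally in $X$ and $A$. The hypothesis that $\i$ maps the spanning class into $\mathcal{B}=\ker\alpha$ says precisely that $\F(\E_{\j}[\m])=0$ for all $\j$ and $\m$. Substituting into the adjunction,
\[
\Hom^{\bullet}_{\mathcal{T}_{1}}(\E_{\j},\G(A))\ \cong\ \Hom^{\bullet}_{\mathcal{A}}(\F(\E_{\j}),A)\ =\ 0\qquad\text{for all }\j\text{ and all }A\in\mathcal{A}.
\]
Because $\{\E_{\j}[\m]\}$ is a spanning class of $\mathcal{T}_{1}$, and $\mathcal{T}_{1}$ carries a Serre functor $\S$ — so that the vanishing criterion defining a spanning class is available in either direction (Serre duality exchanges vanishing of $\Hom^{\bullet}(\E_{\j},-)$ with vanishing of $\Hom^{\bullet}(-,\S(\E_{\j}))$, and $\{\S(\E_{\j})\}$ is again a spanning class since $\S$ is an autoequivalence) — the displayed vanishing forces $\G(A)=0$. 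As $A\in\mathcal{A}$ was arbitrary, $\G=0$; but then $\F$, being left adjoint to the zero functor, is itself zero (take $A=\F(X)$ in the adjunction isomorphism to obtain $\mathrm{id}_{\F(X)}=0$). Therefore $\alpha\circ\i=0$, i.e.\ $\i(\mathcal{T}_{1})\subseteq\mathcal{B}$, which is what we wanted.

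The only step requiring care — and the genuine content of the lemma — is the passage from ``$\F$ annihilates the spanning class'' to ``$\F=0$''. Attacking this directly via the semi-orthogonal triangle $\beta(\i(X))\to\i(X)\to\alpha(\i(X))$ does not work: the object one wants to vanish, $\alpha(\i(X))$, lives in $\mathcal{A}\subseteq\mathcal{T}$ rather than in $\mathcal{T}_{1}$, so the spanning class of $\mathcal{T}_{1}$ cannot be tested against it, and $\R(\alpha(\i(X)))=0$ does not imply $\alpha(\i(X))=0$. Transporting the problem to the adjoint pair $\F\dashv\G$ relating $\mathcal{T}_{1}$ and $\mathcal{A}$, and using the Serre functor on $\mathcal{T}_{1}$ to apply the spanning-class criterion on the side where the vanishing actually holds, is exactly what makes it go through; everything else is formal manipulation of adjunctions.
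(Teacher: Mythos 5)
Your proof is correct and is essentially the same argument as the paper's: both use the adjunction $\i\dashv\R$ to pass from vanishing of $\Hom(\i(\E_{\j}[\m]),\a)$ to vanishing of $\Hom(\E_{\j}[\m],\R(\a))$, invoke the spanning class together with the Serre functor on $\mathcal{T}_1$ to conclude $\R(\a)=0$ for all $\a\in\mathcal{A}$, and then use the adjunction once more to place $\i(\E)$ in $\mathcal{B}={}^{\perp}\mathcal{A}$. Your packaging via the projection functor $\alpha$, the composite $\F=\alpha\circ\i$, and its right adjoint $\G=\R\circ\j_{\mathcal{A}}$ is a tidy structural reformulation, but the underlying steps coincide with the paper's.
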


\begin{proof}
 Write $\{\E_{\i}[\m]\}_{\i\in \I}$ as a set of spanning class of $\mathcal{T_{1}}$. Since the category $\mathcal{T}_{\infty}$ has Serre functor, we can assume the spanning class as the left spanning class. Let $\E\in \mathcal{T_{1}}$, and any $\a\in \mathcal{A}$. By assumption, $\Hom(\i(\E_{\j}[\m]), \mathcal{A})=0$ for any integers $\m$ and $\j$, then $\Hom(\i(\E_{\j}[\m]),\a)\cong \Hom(\E_{\j}[\m],\R(\a))=0$. Since $\{\E_{\i}[\m]\}_{\i\in \I}$ is a spanning class, we have $\R(\a)\cong 0$. But $\Hom(\i(\E),\a)\cong \Hom(\E,\R(\a))\cong 0$, hence $\i(\E) \in \mathcal{B}$.
 \end{proof}

  Let $\X$ be the blow-up of $\Y$ over points $\{\y_{\i}\}_{\i}$. We write the blow-up morphism as $\f\colon \X\longrightarrow \Y$. Let $\{\C_{\i}\}$ be the exceptional divisors which are contracted to the points $\y_{\i}$. We get an embedding $\L\f^{\ast}\colon \Perf(\Y)\hookrightarrow \Perf(\X)$. Define the distinguished objects $\D_{\i}$ attached to the contraction $\f$ as $\L\f^{\ast}\k(\y_{\i})$. Note that the distinguished object $\D_{\i}$ supports at the exceptional divisor $\C_{\i}$.

 \begin{thm}\label{D}
  Let $\X$ be the blow-up of dimension $\n$ Calabi-Yau varity $\Y$. Assume there is an element $\phi$ of $\mathsf{Aut}(\Perf(\X))$ which maps all distinguished objects $\D_{\i}$ to the one whose support is disjoint to all exceptional divisors (base locus of $\K_{\X}$). Then, let $\mathcal{B}$ be any admissible $\C\Y$-$\n$ of $\Perf(\X)$ , we have $\D_{\i}\in \mathcal{B}$.
 \end{thm}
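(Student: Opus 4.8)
The plan is to deduce Theorem~\ref{D} from Theorem~\ref{A}, applied not to $\mathcal{B}$ itself but to its image $\phi(\mathcal{B})$ under the autoequivalence, together with an elementary remark on supports.

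First I would pin down the geometry. Since $\X$ is the blow-up of the Calabi--Yau variety $\Y$ (of dimension $\n\geq 2$) at the points $\{\y_{\i}\}$, the discrepancy formula gives $\omega_{\X}\cong\mathcal{O}_{\X}((\n-1)\sum_{\i}\C_{\i})$, and pushing forward along $\f$ yields $\H^{0}(\X,\omega_{\X})\cong\H^{0}(\Y,\mathcal{O}_{\Y})$; hence the base locus $\Z=\2B\s\vert\omega_{\X}\vert$ is precisely $\bigcup_{\i}\C_{\i}$, the union of the exceptional divisors. Thus the hypothesis on $\phi$ says exactly that the support of $\phi(\D_{\i})$ is disjoint from $\Z$ for every $\i$. (If $\{\y_{\i}\}=\varnothing$, or $\n=1$, the statement is trivial, so assume $\n\geq 2$ and the point set nonempty.)

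The core of the argument is the claim that, for \emph{any} admissible $\n$-Calabi--Yau subcategory $\mathcal{C}\subseteq\Perf(\X)$, every object $\F\in\Perf(\X)$ whose support is disjoint from $\Z$ already lies in $\mathcal{C}$. To see this: admissibility of $\mathcal{C}$ produces a semi-orthogonal decomposition $\Perf(\X)=\langle\mathcal{C}^{\perp},\mathcal{C}\rangle$; it is nontrivial because $\mathcal{C}$ is $\n$-Calabi--Yau while $\Perf(\X)$ is not (its Serre functor is $-\otimes\omega_{\X}[\n]$ with $\omega_{\X}\not\cong\mathcal{O}_{\X}$), and $\HH_{-\n}(\mathcal{C})\neq 0$ by Lemma~\ref{Kuznetsovsod}. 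Theorem~\ref{A} then forces the support of every object of $\mathcal{C}^{\perp}$ to be contained in $\Z$. Decomposing $\F$ with respect to $\langle\mathcal{C}^{\perp},\mathcal{C}\rangle$ produces a triangle $\F_{\mathcal{C}}\to\F\to\F_{\mathcal{C}^{\perp}}\to\F_{\mathcal{C}}[1]$ with $\F_{\mathcal{C}}\in\mathcal{C}$ and $\F_{\mathcal{C}^{\perp}}\in\mathcal{C}^{\perp}$. Since $\mathsf{supp}\,\F$ and $\mathsf{supp}\,\F_{\mathcal{C}^{\perp}}\subseteq\Z$ are disjoint, the sheaf $\R\mathcal{H}\o\m_{\X}(\F,\F_{\mathcal{C}^{\perp}})$ has empty support and hence vanishes, so $\Hom(\F,\F_{\mathcal{C}^{\perp}})=0$; therefore the second map of the triangle is zero, which makes $\F_{\mathcal{C}}\to\F$ a split surjection and exhibits $\F$ as a direct summand of $\F_{\mathcal{C}}\in\mathcal{C}$. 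As $\mathcal{C}$ is thick, $\F\in\mathcal{C}$.

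To finish, $\phi(\mathcal{B})$ is again an admissible $\n$-Calabi--Yau subcategory of $\Perf(\X)$: the adjoints of the inclusion transport along $\phi$, and the Serre functor of $\phi(\mathcal{B})$ is $\phi\circ[\n]\circ\phi^{-1}=[\n]$. Applying the claim with $\mathcal{C}=\phi(\mathcal{B})$ to the object $\phi(\D_{\i})$ --- whose support avoids $\Z$ by hypothesis --- gives $\phi(\D_{\i})\in\phi(\mathcal{B})$, i.e. $\D_{\i}\in\mathcal{B}$, for all $\i$. The only step that needs genuine care is the support-splitting claim: that disjoint supports kill the relevant $\Hom$-group via vanishing of the local $\R\mathcal{H}\o\m$ sheaf, and that a distinguished triangle one of whose maps vanishes splits off the summand in question. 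Everything else is routine bookkeeping --- identifying $\Z$ with $\bigcup_{\i}\C_{\i}$, checking nontriviality of $\langle\mathcal{C}^{\perp},\mathcal{C}\rangle$, and observing that $\phi$ respects admissibility and the Calabi--Yau property so that Theorem~\ref{A} and Lemma~\ref{Kuznetsovsod} apply to $\phi(\mathcal{B})$.
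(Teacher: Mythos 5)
Your proof is correct and takes essentially the same route as the paper: transport by $\phi$, apply Theorem~\ref{A} to the admissible $\n$-Calabi--Yau category $\phi(\mathcal{B})$ to force the support of every object of $\phi(\mathcal{B})^{\perp}$ into $\Z=\bigcup_{\i}\C_{\i}$, and then use that the support of $\phi(\D_{\i})$ avoids $\Z$ by hypothesis. Where you genuinely improve on the paper's write-up is the central inference: the paper simply asserts that ``$\phi\D_{\i}$ belongs to $\mathcal{B}'$ or $\mathcal{B}'^{\perp}$,'' which is not a priori true for a general object with respect to a semi-orthogonal decomposition (Theorem~\ref{A} only states this for skyscraper sheaves). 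Your splitting argument --- decompose $\phi\D_{\i}$ via the triangle $\F_{\mathcal{C}}\to\phi\D_{\i}\to\F_{\mathcal{C}^{\perp}}$, observe that disjoint supports force $\R\mathcal{H}\o\m(\phi\D_{\i},\F_{\mathcal{C}^{\perp}})=0$ and hence $\Hom(\phi\D_{\i},\F_{\mathcal{C}^{\perp}})=0$, so the identity of $\phi\D_{\i}$ lifts along $\F_{\mathcal{C}}\to\phi\D_{\i}$, exhibiting $\phi\D_{\i}$ as a direct summand of $\F_{\mathcal{C}}$ and hence an object of $\mathcal{C}$ by thickness --- is exactly the justification that should be there, and it upgrades the skyscraper statement of Theorem~\ref{A} to arbitrary objects supported off $\Z$. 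The auxiliary checks you perform (the discrepancy formula giving $\omega_{\X}\cong\mathcal{O}_{\X}((\n-1)\sum_{\i}\C_{\i})$ and $\Z=\bigcup_{\i}\C_{\i}$, nontriviality of $\langle\phi(\mathcal{B})^{\perp},\phi(\mathcal{B})\rangle$, and the fact that $\phi$ transports admissibility and the Calabi--Yau property) are all correct and all used, so the proof is complete.
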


\begin{proof}
 Suppose there is a $\C\Y$-$\n$ $\mathcal{B}$ in $\Perf(\X)$. Consider the semi-orhtogonal decomposition $\Perf(\X)=\langle \mathcal{B}^{\perp},\mathcal{B}\rangle$ and the
 embedding $\Perf(\Y)\hookrightarrow \Perf(\X)$. After an auto-equivalence of the derived category $\Perf(\X)$, we get a new semi-orthogonal decomposition with a $\C\Y$-$\n$ component $\mathcal{B}'\cong \mathcal{B}$. We see that the embedding $\L\f^{\ast}$ composing with the auto-equivalence $\phi$ maps the distinguish objects to $\mathcal{B}'$. The reason is as follows: $\phi\D_{\i}$ belongs to $\mathcal{B}'$ or $\mathcal{B}'^{\perp}$. According to Theorem \ref{A}, any objects in $\mathcal{B}'^{\perp}$ must support in the exceptional divisors $\cup\C_{\i}$, hence we have $\phi\D_{\i}\in \mathcal{B}'$. Thus, there exist $\D'_{\i}\in \mathcal{B}'$ such that $\phi \D_{\i}\cong\D'_{\i}$, and then $\D_{\i}=\phi^{-1}\D'_{\i}$ which belongs to $\mathcal{B}$.
 \end{proof}

 \begin{rem} \
\begin{enumerate}
  \item Unfortunately, the usually automorphism of varieties, twisted by line bundles, and shifting do not satisfy our assumption in Theorem \ref{D}. It is interesting to the author that if there is a nontrivial auto-equivalence satisfying assumption in Theorem \ref{D}.
  \item The assumption can't not happen for surfaces. This follows easily from Proposition 12.15 \cite{book2} which is originally proved by Kawamata: There is a correspondence $\Gamma$ which induces automorphism of $\X$. The support of objects $\phi(\D_{\i})$ under the auto-equivalence $\phi$ must contain exceptional curve $\C_{\i}$. It will still be interesting if the assumption could happen for higher dimensional varieties.
\end{enumerate}
\end{rem}

 \begin{thm}\label{Main1}
  Let $\Y$ be a smooth projective Calabi-Yau variety of dimension $\n$, $\f:\X\rightarrow \Y$ is the bolw-up of $\Y$ over points $\{\y_{\i}\}$. Define the distinguish objects
  $\D_{\i}=\L\f^{\ast}\k(\y_{\i})$. Let $\mathcal{B}$ be an $\n$ Calabi-Yau admissible subcategory of $\Perf(\X)$. If all $\D_{\i}\in \mathcal{B}$, then $\mathcal{B}=\L\f^{\ast}\Perf(\Y)$.
\end{thm}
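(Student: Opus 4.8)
The plan is first to establish the inclusion $\L\f^{\ast}\Perf(\Y)\subseteq\mathcal{B}$, and then to use the $\n$-Calabi--Yau hypothesis on $\mathcal{B}$ to promote it to an equality. We may assume at least one point is blown up, the case $\X=\Y$ being immediate.

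I would begin with the geometry of $\omega_{\X}$. Since $\Y$ is Calabi--Yau, $\omega_{\Y}\cong\mathcal{O}_{\Y}$, so $\omega_{\X}\cong\mathcal{O}_{\X}\!\left((\n-1)\sum_{\i}\C_{\i}\right)$; using $\mathcal{O}_{\X}(\C_{\i})\vert_{\C_{\i}}\cong\mathcal{O}_{\mathbb{P}^{\n-1}}(-1)$ a short exact sequence induction gives $\H^{0}(\X,\omega_{\X})\cong\H^{0}(\X,\mathcal{O}_{\X})\cong\k$, while $\Z:=\2B\s\vert\omega_{\X}\vert=\bigcup_{\i}\C_{\i}$ is the union of the exceptional divisors, a proper closed subset. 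In particular $\HH_{-\n}(\Perf(\X))\cong\H^{0}(\X,\omega_{\X})\cong\k$ by Theorem \ref{geometryformula}. Since $\mathcal{B}$ is an $\n$-Calabi--Yau admissible subcategory, Lemma \ref{Kuznetsovsod} gives $\HH_{-\n}(\mathcal{B})\neq0$, so $\Perf(\X)=\langle{}^{\perp}\mathcal{B},\mathcal{B}\rangle$ is a nontrivial semi-orthogonal decomposition and Theorem \ref{A} applies: every object of ${}^{\perp}\mathcal{B}$ is supported on $\Z=\bigcup_{\i}\C_{\i}$, every skyscraper $\k(\x)$ with $\x\notin\Z$ lies in $\mathcal{B}$, and $\mathcal{B}$ is indecomposable.

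The heart of the argument is then an application of Lemma \ref{lem 4.19} with $\T=\Perf(\X)=\langle{}^{\perp}\mathcal{B},\mathcal{B}\rangle$, the fully faithful functor $\i=\L\f^{\ast}\colon\Perf(\Y)\hookrightarrow\Perf(\X)$ (fully faithful and admissible by Orlov's blow-up formula) with right adjoint $\R\f_{\ast}$, and the Serre functor of $\Perf(\Y)$. The relevant spanning class of $\Perf(\Y)$ is the set of shifts $\{\k(\y)[\m]\}$ of skyscraper sheaves. For $\y=\y_{\i}$ we have $\L\f^{\ast}\k(\y_{\i})=\D_{\i}\in\mathcal{B}$ by hypothesis, and for $\y\notin\{\y_{\i}\}$ the morphism $\f$ is an isomorphism over a neighbourhood of $\y$, so $\L\f^{\ast}\k(\y)$ is the skyscraper at $\f^{-1}(\y)\notin\Z$, which lies in $\mathcal{B}$ by the previous paragraph. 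Hence the whole spanning class is carried into $\mathcal{B}$, and Lemma \ref{lem 4.19} yields $\L\f^{\ast}\Perf(\Y)\subseteq\mathcal{B}$.

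It remains to upgrade this to equality, which is where the Calabi--Yau hypothesis is used decisively. Both $\L\f^{\ast}\Perf(\Y)$ and $\mathcal{B}$ are admissible in $\Perf(\X)$ and the former sits inside the latter, so $\L\f^{\ast}\Perf(\Y)$ is admissible in $\mathcal{B}$ and we obtain a semi-orthogonal decomposition $\mathcal{B}=\langle\mathcal{C},\L\f^{\ast}\Perf(\Y)\rangle$. Because $\mathcal{B}$ is $\n$-Calabi--Yau, its Serre functor is $[\n]$, so Serre duality inside $\mathcal{B}$ upgrades the semi-orthogonality $\Hom(\L\f^{\ast}\Perf(\Y),\mathcal{C})=0$ to $\Hom(\mathcal{C},\L\f^{\ast}\Perf(\Y))=0$ as well; the decomposition is therefore orthogonal, $\mathcal{B}=\mathcal{C}\oplus\L\f^{\ast}\Perf(\Y)$. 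Since $\mathcal{B}$ is indecomposable and $\L\f^{\ast}\Perf(\Y)\neq0$, we conclude $\mathcal{C}=0$, i.e. $\mathcal{B}=\L\f^{\ast}\Perf(\Y)$. (Equivalently one can finish by Hochschild homology: additivity gives $\HH_{-\n}(\mathcal{B})\cong\HH_{-\n}(\mathcal{C})\oplus\HH_{-\n}(\Perf(\Y))$, and comparing with $\HH_{-\n}(\mathcal{B})\cong\HH_{-\n}(\Perf(\X))\cong\k$ and $\HH_{-\n}(\Perf(\Y))\cong\k$ forces $\HH_{-\n}(\mathcal{C})=0$, hence $\mathcal{C}=0$, as a nonzero $\n$-Calabi--Yau category has $\HH_{-\n}\neq0$.) The main obstacle is the first inclusion: one has to notice that the hypothesis $\D_{\i}\in\mathcal{B}$ together with the skyscraper conclusion of Theorem \ref{A} covers exactly a spanning class of $\Perf(\Y)$, which is precisely what makes Lemma \ref{lem 4.19} applicable.
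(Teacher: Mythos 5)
Your proof is correct and follows essentially the same route as the paper's: establish $\L\f^{\ast}\Perf(\Y)\subseteq\mathcal{B}$ via the spanning class of skyscraper sheaves (hypothesis for the $\D_{\i}$, Theorem~\ref{A} for $\k(\x)$ with $\x\notin\Z$, then Lemma~\ref{lem 4.19}), and then upgrade to equality using the Calabi--Yau property of $\mathcal{B}$ to turn the semi-orthogonal decomposition into an orthogonal one and kill the complement. Your write-up is in fact a bit more careful than the paper's in spelling out the geometry of $\omega_{\X}$ (identifying $\Z=\bigcup_{\i}\C_{\i}$ and $\H^{0}(\X,\omega_{\X})\cong\k$) and in offering the indecomposability of $\mathcal{B}$ from Theorem~\ref{A} as an alternative clean finish to the Hochschild-additivity argument the paper uses.
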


 \begin{proof}
 $$\Perf(\Y)\hookrightarrow\Perf(\X)=\langle \mathcal{B}{^\perp},\mathcal{B}\rangle $$
 \par
 Since the skyscraper sheaves of all the closed points (and shifting) form a spanning class, according to Theorem \ref{D} and Theorem \ref{A}, $\L\f^{\ast}\k(\y_{\i})\in \mathcal{B}$, and then by Lemma \ref{lem 4.19}, we get an embedding $\Perf(\Y)\hookrightarrow \mathcal{B}$.
 The embedding $\Perf(\Y)\hookrightarrow \mathcal{B}$ must induce an orthogonal decomposition because $\mathcal{B}$ is a Calabi-Yau category.
 $$\mathcal{B}=\langle \Perf(\Y)^{\perp},\Perf(\Y)\rangle.$$
 Applying additive theory of Hochschild cohomology for orthogonal decompositon, see \cite[Proposition 5.5]{K}, we have
 $$\HH^{0}(\mathcal{B})\cong \HH^{0}(\Perf(\Y))\oplus \HH^{0}(\Perf(\Y)^{\perp}).$$
 If we assume $\Perf(\Y)^{\perp}$ to be nontrivial, then $\HH^{0}(\Perf(\Y)^{\perp})\neq 0$, and obviously $\HH^{0}(\Perf(\Y))\neq 0$. However, by Theorem \ref{A}, we have $\HH^{0}(\mathcal{B})=\HH^{0}(\Y)=\k$, a contradiction. Thus, the embedding $\Perf(\Y)\hookrightarrow\mathcal{B}$ is an equivalence.
\end{proof}

\end{document}